\theoremstyle{plain}
\newtheorem{theorem}{Theorem}[section]
\newtheorem{corollary}[theorem]{Corollary}
\newtheorem{lemma}[theorem]{Lemma}
\newtheorem{proposition}[theorem]{Proposition}
\theoremstyle{definition}
\newtheorem{example}[theorem]{Example}
\newtheorem{remark}[theorem]{Remark}
\providecommand{\R}{\mathbb{R}}
\providecommand{\N}{\mathbb{N}}
\providecommand{\Z}{\mathbb{Z}}
\providecommand{\D}{\displaystyle}
\newcommand{\Pa}{\partial}
\renewcommand{\Im}{\mathop{\rm Im}\nolimits}
\newcommand{\norm}[1]{\left|\hspace{-.1em}\left| #1 \right|\hspace{-.1em}\right|}
\newcommand{\V}[1]{\vspace{#1}}
\newcommand{\Cl}[1]{\mathop{\rm Cl}\nolimits\left(#1\right)}
\providecommand{\Diff}{\mathop{\rm Diff}\nolimits}
\providecommand{\Ker}{\mathop{\rm Ker}\nolimits}
\providecommand{\supp}{\mathop{\rm supp}\nolimits}
\providecommand{\Int}{\mathop{\rm Int}\nolimits}
\providecommand{\id}{\mathop{\rm id}\nolimits}
\providecommand{\rank}{\mathop{\rm rank}\nolimits}
\title[Stability of non-proper functions]
{Stability of non-proper functions}
\author[K.~Hayano]{Kenta Hayano}
\address{
Department of Mathematics Faculty of Science and Technology, Keio University
Yagami Campus: 3-14-1 Hiyoshi, Kohoku-ku, Yokohama, 223-8522, Japan}
\email{k-hayano@math.keio.ac.jp}
\keywords{Stability of smooth mappings, end-triviaity (local triviality at infinity)}
\subjclass[2010]{
57R45 (primary), 
14P20, 
58D99 (secondary) 
}
\dedicatory{Dedicated to Professor Takashi Nishimura on the occasion of his 60th birthday. }
\begin{document}

\maketitle

\begin{abstract}

The purpose of this paper is to give a sufficient condition for (strong) stability of non-proper smooth functions (with respect to the Whitney $C^\infty$-topology). 
We show that a Morse function is stable if it is end-trivial at any point in its discriminant, where end-triviality (which is also called local triviality at infinity) is a property concerning behavior of functions around the ends of the source manifolds. 
We further show that a Morse function $f:N\to \R$ is strongly stable (i.e.~there exists a continuous mapping $g\mapsto (\Phi_g,\phi_g)\in\Diff(N)\times \Diff(\R)$ such that $\phi_g\circ g\circ \Phi_g =f$ for any $g$ close to $f$) if (and only if) $f$ is quasi-proper. 
This result yields existence of a strongly stable but not infinitesimally stable function. 
Applying our result on stability, we give a reasonable sufficient condition for stability of Nash functions, and show that any Nash function becomes stable after a generic linear perturbation. 

\end{abstract}

\section{Introduction} 

A smooth mapping $f$ between manifolds is said to be \emph{stable} if for any mapping $g$ sufficiently close to $f$ (with respect to the Whitney $C^\infty$-topology) one can take diffeomorphisms $\Phi$ and $\phi$ of the source and the target manifolds, respectively, so that they satisfy $\phi\circ f\circ \Phi = g$. 
Stable mappings play an important role not only in the theory of singularities of differentiable mappings, but also in recent studies on topology of smooth manifolds (see \cite{SaekiYamamoto} and \cite{GKtrisection}, for example).

In spite of its simple and natural definition, it is in general difficult to check whether a given smooth mapping is stable or not (cf.~\cite[\S.29]{Whitney}). 
The first breakthrough in detecting stability is due to Mather \cite{MatherII,MatherV}. 
In his seminal work, Mather introduced the notion of \emph{infinitesimal stability} and show that infinitesimal stability implies stability for \emph{proper} smooth mappings (\cite{MatherII}, for the definition of infinitesimal stability, see Subsection~\ref{Se:stability maps}).
Mather further introduced two other variants of stability which we call \emph{strong stability} and \emph{local stability}: $f$ is strongly stable if there exists a continuous mapping $g\mapsto (\Phi_g,\phi_g)\in \Diff(N)\times \Diff(P)$ defined on a neighborhood of $f$ such that $\phi_g\circ f\circ \Phi_g = g$ for any $g$ in the neighborhood, while local stability is a local version of infinitesimal stability (we will give the precise definition of local stability in Subsection~\ref{Se:stability maps}). 
It was then shown in \cite{MatherV} that all the four stabilities are equivalent for \emph{proper} smooth mappings. 

Unfortunately, as Mather already pointed out, the four stabilities above are not equivalent for \emph{non-proper} mappings. 
Whereas it is relatively easy to check local or infinitesimal stability of smooth mappings (see \cite[\S.4 and 5]{MatherV}), the problem of detecting (strong) stability is much harder.
As far as the author knows, the only result concerning stability of non-proper mappings is due to Dimca \cite{Dimca}: he gave a necessary and sufficient condition for stability of Morse functions \emph{defined on $\R$} (see Theorem~\ref{T:Dimca stability R}). 
As for strong stability, du Plessis and Vosegaard \cite{duPlessisVosegaard} gave a necessary and sufficient condition for a smooth mapping to be strongly \emph{and} infinitesimally stable.
The purpose of this paper is to give a sufficient condition for (strong) stability of non-proper functions. 

In order to explain the main result of the paper, we will give several definitions. 
An \emph{open neighborhood of the end of $N$} is an open subset $V$ of $N$ whose complement is compact. 
A smooth mapping $f:N\to P$ is said to be \emph{end-trivial at $y\in P$} if there exist an open neighborhood $W\subset P$ of $y$ and an open neighborhood $V$ of the end of $N$ satisfying the following conditions\footnote{Such an $f$ is also said to be \emph{locally trivial at infinity at $y\in P$} in references.
We will not use this term as we would like to avoid repetition of "at".
}: 

\begin{enumerate}

\item 
$f^{-1}(y)\cap V$ does not contain a critical point of $f$.

\item 
There exists a diffeomorphism $\Phi:(f^{-1}(y)\cap V)\times W\to f^{-1}(W)\cap V$ such that $f\circ \Phi$ is the projection to the second component. 

\end{enumerate}
We denote by $\tau (f)$ the set of points at which $f$ is end-trivial. 
For a smooth mapping $f:N\to P$, let $\Sigma(f)\subset N$ be the set of points $x\in N$ with $\rank(df_x) < p$~($=\dim P$) and $\Delta(f) =f(\Sigma(f))$.
A mapping $f$ is said to be \textit{quasi-proper} if there exists a neighborhood $V\subset P$ of $\Delta(f)$ such that the restriction $f|_{f^{-1}(V)}:f^{-1}(V)\to V$ is proper. 

\begin{theorem}\label{T:suff condi stability}
Let $N$ be a smooth manifold without boundaries and $f:N\to \R$ be a Morse function\footnote{A function $f:N\to \R$ is a \emph{Morse function} if $f|_{\Sigma(f)}$ is injective and $f$ has a non-degenerate Hessian at any point in $\Sigma(f)$. 
Note that $f$ is a Morse function if and only if $f$ is locally stable. }. 

\begin{enumerate}

\item 
A function $f$ is stable if $\Delta(f)$ is contained in $\tau(f)$. 

\item 
A function $f$ is strongly stable if and only if $f$ is quasi-proper. 

\end{enumerate}
\end{theorem}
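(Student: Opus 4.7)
The plan is to handle the two assertions separately, using Mather's strategy for the proper case as a template and exploiting end-triviality (in part (1)) and quasi-properness (in part (2)) to control behavior near the ends of $N$.

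For part (1), since $f$ is Morse, $\Delta(f)$ is a discrete subset of $\R$ and $\Sigma(f)$ is a discrete subset of $N$. For any $g$ sufficiently close to $f$, a parametrized Morse lemma supplies, on a small neighborhood of each point of $\Sigma(f)$, local diffeomorphisms carrying $g$ to $f$. The task is to extend these to a global pair $(\Phi,\phi)$ with $\phi\circ f\circ \Phi = g$. End-triviality enters as follows: given a critical value $y\in\Delta(f)$ and a compact set $K$ that engulfs the Morse-lemma neighborhoods of the critical points in $f^{-1}(y)$, the hypothesis produces an open neighborhood $W\ni y$ and a product structure $f^{-1}(W)\cap V\cong (f^{-1}(y)\cap V)\times W$ away from $K$. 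One then glues, via a cut-off supported in $W$, the Morse-normal-form diffeomorphisms constructed on $K$ with the identity along the projection $(f^{-1}(y)\cap V)\times W\to W$ outside of $K$. Since $\Delta(f)$ is discrete, this can be done independently near each critical value and patched together using Ehresmann-type trivializations over compact pieces of $\R\setminus\Delta(f)$.

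For the \emph{if} direction of part (2), the same outline is used but every ingredient must depend continuously on $g$. Quasi-properness provides a neighborhood $V\supset \Delta(f)$ on which $f$ is proper, so near each critical value Mather's theorem for the proper case yields a continuous local selection of $(\Phi_g^{\mathrm{loc}},\phi_g^{\mathrm{loc}})$; the parametrized Morse lemma gives the required continuity on the normal-form side. Between critical values $f$ is a proper submersion over suitable intervals, and Ehresmann's theorem with parameters provides continuous trivializations; a partition of unity argument then glues everything continuously. For the \emph{only if} direction, suppose $f$ is not quasi-proper, so that there exist a critical value $c\in\Delta(f)$ and a sequence $\{p_n\}\subset N$ escaping every compact set with $f(p_n)\to c$. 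I would construct a one-parameter family of perturbations $g_t$ ($t\in[0,1]$) agreeing with $f$ outside small disjoint balls around the $p_n$ and introducing new Morse critical points with critical values accumulating at $c$. Strong stability would yield a continuous family $\phi_t\in\Diff(\R)$ with $\phi_t(\Delta(g_t))=\Delta(f)$; but at $t=0$ only $c$ is present near $c$ while for $t>0$ infinitely many critical values accumulate at $c$, forcing $\phi_t$ to jump and contradicting continuity.

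The hardest step is the \emph{only if} direction of (2): making the informal obstruction above into a rigorous contradiction requires a careful choice of the perturbations $g_t$ (so that they actually lie in a preassigned Whitney neighborhood of $f$, which is restrictive at the ends of $N$) and an argument that no continuous selection of $\phi_t$ can re-order the new critical values consistently as $t\downarrow 0$. A secondary difficulty is ensuring, in the \emph{if} direction, that every gluing needed to pass from local Mather-style diffeomorphisms to global ones can be made to depend continuously on $g$ in the Whitney $C^\infty$--topology, where control at infinity has to be built into each step of the construction rather than added at the end.
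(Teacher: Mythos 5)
Your outline for part (1) shares the paper's broad strategy (normal forms near $\Sigma(f)$, end-triviality to compensate near the ends, trivializations over the regular part), but two steps that you treat as routine are where the real content lies, and one of them fails as stated. First, ``Ehresmann-type trivializations over compact pieces of $\R\setminus\Delta(f)$'' are not available: $f^{-1}([a,b])$ is non-compact even for $[a,b]$ disjoint from $\Delta(f)$, and Ehresmann's theorem requires properness. The paper instead invokes du Plessis--Wall's strong stability theorem for (non-proper) submersions to arrange $g=f$ outside a neighborhood of $\Sigma(f)$ as a first reduction. Second, and more seriously, after all local normalizations you still must produce a single global pair $(\Phi,\phi)$; matching the local trivializations of $f$ and $g$ on overlaps is exactly the hard part, and ``a partition of unity argument then glues everything'' does not do it. The paper's proof spends most of its length here: it reduces in stages to the case $\Sigma(g)=\Sigma(f)$, $\Delta(g)=\Delta(f)$, $g=f$ off a neighborhood of $\Sigma(f)$, and then runs Mather's homotopy method (integrating a time-dependent vector field solving $t'f(\xi)=\partial\overline g/\partial t$), which in the non-proper setting requires new module-theoretic input (finite generation of modules of germs of \emph{continuous} families of sections with prescribed jet vanishing along $\Sigma\times I$, and surjectivity of the induced $t\overline{j}$) because the classical preparation-theorem argument is unavailable. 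Your proposal contains no substitute for this step. Note also that the delicate point you do not isolate is that the target diffeomorphism $\phi$ (needed to match critical values) perturbs $g$ on the entire non-compact preimage $f^{-1}(W)$; end-triviality is used precisely to build a source diffeomorphism supported near the ends that undoes this, and it is this piece that is discontinuous in $g$ in general --- which is why (1) gives only stability and not strong stability.

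The ``only if'' direction of part (2) is approached incorrectly, and the error is detectable from the paper's own Example~\ref{Ex:modelMorse stable}. The function $G_k$ ($0<k<n$) is Morse, not quasi-proper, yet \emph{stable}: every $g$ in some Whitney neighborhood of $G_k$ is $\mathcal A$-equivalent to $G_k$. Your proposed perturbations $g_t$, which for fixed $t>0$ introduce new critical values accumulating at $c$, would (if they existed in every Whitney neighborhood) be non-equivalent to $f$ and would therefore disprove plain stability, not just strong stability --- contradicting the stability of $G_k$. In fact such perturbations generally cannot be constructed: along an escaping sequence $p_n$ with $f(p_n)\to c$ the norm $\lVert df_{p_n}\rVert$ may be bounded below (for $G_k$ it tends to infinity), and the Whitney $C^\infty$--topology forces perturbations near $p_n$ to shrink faster than any prescribed rate, so no new critical points appear. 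The genuine obstruction to \emph{strong} stability is the impossibility of a \emph{continuous} selection $g\mapsto(\Phi_g,\phi_g)$, not the existence of inequivalent neighbors; the paper does not reprove this but cites du Plessis--Vosegaard for the necessity of quasi-properness, and proves only the ``if'' direction, by observing that under quasi-properness the one discontinuous ingredient of the part-(1) construction degenerates to the identity.
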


\noindent
We will give two remarks on this theorem. 
First, it is easy to verify that Dimca's condition for $f:\R\to \R$ mentioned above (see Theorem~\ref{T:Dimca stability R}) is equivalent to our sufficient condition $\Delta(f)\subset \tau(f)$. 
In particular, our sufficient condition is also necessary for stability of functions on $\R$. 
Second, du Plessis and Vosegaard \cite{duPlessisVosegaard} already showed that quasi-properness is a necessary condition for strong stability of smooth mappings, and we will indeed prove that a quasi-proper Morse function is strongly stable. 

The paper is organized as follows. 
We will give several definitions in Section~\ref{Se:preliminaries}. 
Section~\ref{Se:proof main thm} is devoted to the proof of Theorem~\ref{T:suff condi stability}. 
In Section~\ref{Se:application} we will give two applications of Theorem~\ref{T:suff condi stability}. 
First, we will explicitly give an example of strongly stable but not infinitesimally stable function. (See Theorem~\ref{T:ex strongly but not inf stable func}. We indeed prove that the function $F(x)=\exp(-x^2)\sin x$ has the desired properties.) 
As far as the author knows, there was no reasonable condition guaranteeing only strong stability (as we mentioned, du Plessis and Vosegaard \cite{duPlessisVosegaard} gave a necessary and sufficient condition for strong \emph{and} infinitesimal stability of smooth mappings), in particular we could not obtain such an example. 
The second application concerns stability of Nash (i.e.~semi-algebraic smooth) functions. 
For a semi-algebraic mapping $f$, the complement of $\tau(f)$ in the target space is called the set of \textit{bifurcation values at infinity}, which we denote by $B_\infty(f)$, and have been studied in the field of algebraic geometry (see \cite{DTDetectingBifValue} and references therein).
Although it is in general difficult to determine $B_\infty(f)$ completely, one can estimate this set by analyzing several larger sets containing $B_\infty(f)$, defined by considering the \textit{Fedoryuk condition} or the \textit{Malgrange condition}, for example. 
Using these estimates and Theorem~\ref{T:suff condi stability}, we will show that the following function is stable for any $k\in \{1,\ldots,n-1\}$ (Example~\ref{Ex:modelMorse stable}): 
\[
G_k(x_1,\ldots,x_n) = \sum_{i=1}^{k} x_i^2 -\sum_{j=k+1}^n x_j^2. 
\]
Note that we can immediately deduce from Mather's result \cite{MatherV} that $G_0$ and $G_n$ are (strongly) stable since these functions are proper (while $G_k$ for $k \in \{1,\ldots,n-1\}$ is not even quasi-proper).  
Relying on the result in \cite{Ichiki}, we will also prove that any Nash function on $\R^n$ becomes stable after generic linear perturbation (Corollary~\ref{T:stable after linear perturbation}).

\section{Preliminaries}\label{Se:preliminaries}

Throughout the paper, we will assume that manifolds are smooth, second countable and do not have boundaries unless otherwise noted. 
For manifolds $N$ and $P$, we denote the set of smooth mappings from $N$ to $P$ by  $C^\infty(N,P)$. 
Let $\Diff(N)\subset C^\infty(N,N)$ be the set of self-diffeomorphisms of $N$ and $C^\infty(N) = C^\infty(N,\R)$.  
%
Let $X,Y$ be topological spaces. 
For a subset $A\subset X$, we denote its (topological) interior and closure by $\Int(A)$ and $\Cl{A}$, respectively. 
A mapping $f:X\to Y$ is \textit{proper} if the preimage $f^{-1}(K)$ of any compact subset $K\subset Y$ is compact.  
Note that properness of $f$ is equivalent to the condition $Z(f)=\emptyset$, where $Z(f)\subset Y$ is the set of \textit{improper points} of $f$, defined as follows: 
\[
Z(f) =\left\{y\in Y~\left|~\exists\{x_n\}:\begin{minipage}[c]{36mm}
sequence in $X$

without cluster points
\end{minipage}\mbox{s.t. }y=\lim_{n\to \infty}f(x_n)  \right.\right\}.
\]
A smooth mapping $g:N\to P$ between manifolds $N$ and $P$ is \textit{quasi-proper} if there exists a neighborhood $V\subset P$ of the discriminant $\Delta(g)$ such that the restriction $g|_{g^{-1}(V)}:g^{-1}(V)\to V$ is proper. 
This condition is equivalent to the condition $Z(g) \cap \Delta(g) =\emptyset$ (see \cite[Corollary 3.2.15]{duPlessisWall}). 

\subsection{Whitney $C^k$-topology}\label{Se:Whitney topology}
For a non-negative integer $k$, we denote the $k$-jet bundle with the source $N$ and the target $P$ by $J^k(N,P)$.  
For a smooth mapping $f\in C^\infty(N,P)$, let $j^kf:N\to J^k(N,P)$ be the $k$-jet extension of $f$. 
For an open subset $U\subset J^k(N,P)$, we define the subset $M(U)\subset C^\infty(N,P)$ as follows: 
\[
M(U) = \left\{f\in C^\infty(N,P)~|~j^kf(N)\subset U \right\}. 
\]
It is easy to see that the family $\mathcal{M}_k=\{M(U)\subset C^\infty(N,P)~|~U\subset J^k(N,P):\mbox{open}\}$ forms a basis for a topology $\tau W^k$ of $C^\infty(N,P)$, which we call the \textit{Whitney $C^k$-topology}.  
We further define the \textit{Whitney $C^\infty$-topology} $\tau W^\infty$ as a topology with an open basis $\bigcup_{k\geq 0} \mathcal{M}_k$. 

In what follows we will explain a neighborhood basis of $\tau W^k$ due to Mather (\cite{MatherII}).
For a smooth mapping $h:\R^n \to \R^p$, $x\in \R^n$ and a positive integer $k$, we define a linear mapping 
\[
D^kh(x):(\R^n)^{\otimes k} \to \R^p
\]
by giving a value of $e_{i_1}\otimes\cdots\otimes e_{i_k}\in (\R^n)^{\otimes k}$ (where $\{e_1,\ldots,e_n\}$ is the standard basis of $\R^n$) as follows: 
\[
D^kh(x)(e_{i_1}\otimes\cdots\otimes e_{i_k}) = \left(\frac{\Pa^k h_1}{\Pa x_{i_1}\cdots \Pa x_{i_k}}(x),\ldots,\frac{\Pa^k h_p}{\Pa x_{i_1}\cdots \Pa x_{i_k}}(x)\right).
\]
Using the operator norm $\norm{D^k h(x)}$ of $D^kh(x)$, we define $\norm{h}_{k,x}$ and $ \norm{h}_{k,X}$ for $X\subset \R^n$ as follows:  
\[
\norm{h}_{k,x} = \norm{h(x)} + \sum_{j=1}^{k}\norm{D^jh(x)} \mbox{ and }\norm{h}_{k,X} = \sup_{x\in X} \norm{h}_{k,x}. 
\]
Note that for a function $f:\R^n\to \R$, $\norm{D^1f(x)}$ is equal to $\norm{df_x} = \sqrt{\sum_{i=1}^{n}\left(\frac{\Pa f}{\Pa x_i}(x)\right)^2}$ and $\norm{D^2f(x)}$ is equal to 
$\sqrt{\sum_{i=1}^{l} m_i\lambda_i^2}$, where $\lambda_1,\ldots,\lambda_l$ are the eigenvalues of the Hessian of $f$ at $x$ and $m_i$ is the multiplicity of $\lambda_i$. 

For a smooth mapping $f\in C^\infty(N,P)$, we take systems of coordinate neighborhoods $\varphi=\{(U_\alpha,\varphi_\alpha)\}_{\alpha\in A}$ and $\psi=\{(V_\alpha, \psi_\alpha)\}_{\alpha\in A}$ of $N$ and $P$, respectively, and a locally finite covering $K=\{K_\alpha\}_{\alpha \in A}$ of $N$ consisting of compact subsets so that $K_\alpha \subset U_\alpha$ and $f(U_\alpha)\subset V_\alpha$ for each $\alpha \in A$. 
Let $\varepsilon =\{\varepsilon_\alpha\}_{\alpha\in A}$ be a system of positive numbers. 
For each $\alpha \in A$, we denote the following subset by $N_k(f,K_\alpha,\varphi_\alpha,\psi_\alpha,\varepsilon_\alpha)\subset C^\infty(N,P)$: 
\[\left\{g\in C^\infty(N,P) ~\left|~\norm{\psi_\alpha\circ f\circ \varphi_\alpha^{-1}-\psi_\alpha\circ g\circ \varphi_\alpha^{-1}}_{k,\varphi_\alpha(K_\alpha)} < \varepsilon_\alpha\right. \right\}. 
\]
We further define $N_k(f,K,\varphi,\psi,\varepsilon) = \bigcap_{\alpha\in A} N_k(f,K_\alpha,\varphi_\alpha,\psi_\alpha,\varepsilon_\alpha)$. 

\begin{theorem}[{\cite[\S.4, Lemma 1]{MatherII}}]\label{T:basis Whitney topology}
For $k<\infty$ the system 
\[
\{N_k(f,K,\varphi,\psi,\varepsilon)\subset C^\infty(N,P)~|~\varepsilon = \{\varepsilon_\alpha\}_{\alpha\in A}:\mbox{system of positive numbers}\}
\]
is a neighborhood basis of $f\in C^\infty(N,P)$ with respect to the topology $\tau W^k$. 
\end{theorem}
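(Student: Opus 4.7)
The plan is to establish both directions required for a neighborhood basis: first, that each $N_k(f,K,\varphi,\psi,\varepsilon)$ is an open neighborhood of $f$ in $\tau W^k$; second, that every basic $\tau W^k$-open neighborhood of $f$ of the form $M(U)$ contains some $N_k(f,K,\varphi,\psi,\varepsilon)$. Both directions amount to translating between the intrinsic jet-bundle description of $\tau W^k$ and the chart-by-chart Mather description.

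For the first direction, I would identify $N_k(f,K_\alpha,\varphi_\alpha,\psi_\alpha,\varepsilon_\alpha)$ with $M(W_\alpha)$ for the subset
\[
W_\alpha = \bigl(J^k(N,P)\setminus \pi^{-1}(K_\alpha)\bigr) \cup B_\alpha,
\]
where $\pi:J^k(N,P)\to N$ is the source projection and $B_\alpha \subset \pi^{-1}(U_\alpha)$ is the locus of jets whose local coordinate expression (with respect to $\varphi_\alpha,\psi_\alpha$) differs from that of $j^k f$ by less than $\varepsilon_\alpha$ in the pointwise $\|\cdot\|_{k,\varphi_\alpha(x)}$ sense. The set $B_\alpha$ is open by continuity of the coordinate readout on $\pi^{-1}(U_\alpha)$, and $J^k(N,P)\setminus \pi^{-1}(K_\alpha)$ is open since $K_\alpha$ is compact. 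Compactness of $K_\alpha$ also ensures that strict pointwise inequality on $K_\alpha$ is equivalent to strict supremum inequality, so the identification $N_k(f,K_\alpha,\ldots) = M(W_\alpha)$ goes through. Passing to the intersection over $\alpha$, I would use local finiteness of $\{K_\alpha\}$: every point of $N$ has a neighborhood meeting only finitely many $K_\alpha$, so every point of $J^k(N,P)$ has a saturated neighborhood on which only finitely many $W_\alpha$ impose a non-trivial condition. The intersection $\bigcap_\alpha W_\alpha$ therefore reduces locally to a finite intersection of open sets and is open, giving $N_k(f,K,\varphi,\psi,\varepsilon)=M(\bigcap_\alpha W_\alpha) \in \mathcal{M}_k$.

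For the second direction, suppose $U\subset J^k(N,P)$ is open with $j^k f(N)\subset U$. For each $\alpha$ the image $j^k f(K_\alpha)$ is a compact subset of $U$, which in the local chart corresponds to a compact subset of the local jet space over $\varphi_\alpha(K_\alpha)$. By compactness of $\varphi_\alpha(K_\alpha)$, continuity of the jet data, and a tube-lemma argument, I can choose $\varepsilon_\alpha > 0$ small enough that any smooth map $g$ whose local derivatives differ from those of $f$ by less than $\varepsilon_\alpha$ at every point of $\varphi_\alpha(K_\alpha)$ satisfies $j^k g(K_\alpha)\subset U$. Since $\{K_\alpha\}$ covers $N$, the resulting system $\varepsilon=\{\varepsilon_\alpha\}$ yields $N_k(f,K,\varphi,\psi,\varepsilon)\subset M(U)$.

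I expect the main technical point to be the openness of the infinite intersection in the first direction: one must carefully exploit the local finiteness of $\{K_\alpha\}$ so that the chart-wise constructions $W_\alpha$ genuinely assemble into an open subset of $J^k(N,P)$, and one must use compactness of each $K_\alpha$ to pass between the pointwise and supremum versions of the norm condition so that the set-theoretic identification $N_k(f,K_\alpha,\varphi_\alpha,\psi_\alpha,\varepsilon_\alpha) = M(W_\alpha)$ is exact. The remainder of the argument is routine continuity and compactness.
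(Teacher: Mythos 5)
This statement is quoted in the paper as \cite[\S.4, Lemma 1]{MatherII} and is not proved there, so there is no in-paper argument to compare against; I can only assess your proposal against the standard (Mather-style) proof, which it reproduces faithfully. Both halves of your argument are correct: the identification $N_k(f,K_\alpha,\varphi_\alpha,\psi_\alpha,\varepsilon_\alpha)=M(W_\alpha)$ together with local finiteness of $\{K_\alpha\}$ gives openness of $N_k(f,K,\varphi,\psi,\varepsilon)=M\bigl(\bigcap_\alpha W_\alpha\bigr)$, and the compactness/tube-lemma argument gives the reverse inclusion $N_k(f,K,\varphi,\psi,\varepsilon)\subset M(U)$; the pointwise-versus-supremum remark is exactly the right justification for the exactness of the first identification. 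The only point worth tightening is that the ``coordinate readout'' defining $B_\alpha$ only makes sense on jets whose \emph{target} lies in $V_\alpha$, so $B_\alpha$ must be taken inside $\pi^{-1}(U_\alpha)\cap\beta^{-1}(V_\alpha)$ (with $\beta$ the target projection); correspondingly, membership in $N_k(f,K_\alpha,\varphi_\alpha,\psi_\alpha,\varepsilon_\alpha)$ implicitly includes $g(K_\alpha)\subset V_\alpha$, and in the second direction one should shrink $\varepsilon_\alpha$ if necessary so that $\psi_\alpha\circ g$ remains defined on $K_\alpha$. These are routine adjustments and do not affect the correctness of the outline.
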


\begin{remark}
For a system $\{V_\alpha\}_{\alpha\in A}$, where $V_\alpha\subset C^\infty(K_\alpha,P)$ is an open subset with respect to the topology $\tau W^k$, we define a subset $\cap_\alpha V_\alpha\subset C^\infty(N,P)$ as follows: 
\[
\cap_\alpha V_\alpha = \{f\in C^\infty(N,P)~|~ \forall \alpha \in A, \hspace{.3em} f|_{K_\alpha} \in V_\alpha\}. 
\] 
\noindent
We can easily deduce from Theorem~\ref{T:basis Whitney topology} that the following system is a basis of $\tau W^k$ for $k<\infty$:
\[
\mathcal{N}_k = \left\{\cap_\alpha V_\alpha\subset C^\infty(N,P) ~\left|~\{V_\alpha\}_{\alpha\in A}\mbox{~:~system of open subsets as above} \right.\right\}. 
\]
We can also define a system $\mathcal{N}_\infty$ of subsets of $C^\infty(N,P)$ in a similar manner, yet it is \emph{not} a basis of the topology $\tau W^\infty$ but produces a stronger topology of $C^\infty(N,P)$, which is called the \emph{very strong topology} in \cite{duPlessisVosegaard}.

\end{remark}

\subsection{Stability of smooth mappings}\label{Se:stability maps}

A smooth mapping $f\in C^\infty(N,P)$ is \textit{stable} if there exists an open neighborhood $\mathcal{U}\subset C^\infty(N,P)$ of $f$ (with respect to the topology $\tau W^\infty$) and (not necessarily continuous) mappings $\Theta:\mathcal{U}\to \Diff(N)$ and $\theta:\mathcal{U}\to \Diff(P)$ such that $\theta(g)\circ g\circ\Theta(g) = f$ for $g\in \mathcal{U}$. 
A smooth mapping $f\in C^\infty(N,P)$ is \textit{strongly stable} if we can further make $\Theta$ and $\theta$ above continuous (with respect to the topologies $\tau W^\infty$). 
For a vector bundle $E$ on $N$, we denote the set of smooth sections of $E$ by $\Gamma(E)$, which is a $C^\infty(N)$-module. 
We define a $C^\infty(N)$-module homomorphism $tf: \Gamma(TN) \to \Gamma(f^\ast TP)$ and a $C^\infty(P)$-module homomorphism $\omega f:\Gamma(TP)\to \Gamma(f^\ast TP)$ as follows: 
\[
tf(\xi) = df\circ \xi\mbox{ and }\omega f(\eta) = \eta \circ f. 
\]
A smooth mapping $f\in C^\infty(N,P)$ is \textit{infinitesimally stable} if the following equality holds: 
\[
\Gamma(f^\ast TP) = tf(\Gamma(TN)) + \omega f (\Gamma(TP)). 
\]
For a subset $S\subset N$ we denote the set of germs of sections of a vector bundle $E$ on $N$ at $S$ by $\Gamma(E)_S$. 
The homomorphism $tf$ (resp.~$\omega f$) induces a homomorphism from $\Gamma(TN)_S$ (resp.~$\Gamma(TP)_{f(S)}$) to $\Gamma(f^\ast TP)_S$ in the obvious way. 
A smooth mapping $f\in C^\infty(N,P)$ is \textit{locally stable} if the following equality holds for any $y\in \Delta(f)$ and $S\subset f^{-1}(y)$ with $\sharp(S)\leq \dim P+1$: 
\[
\Gamma(f^\ast TP)_S = tf(\Gamma(TN)_S) + \omega f (\Gamma(TP)_y). 
\]

As we noted in the introduction, all the four stabilities above are equivalent for a proper mapping $f\in C^\infty(N,P)$. 
In the rest of the subsection we will briefly review known results on stabilities for general (non-proper) mappings. 
We can immediately deduce from the definitions that strong stability (resp.~infinitesimal stability) implies stability (resp.~local stability). 
Mather \cite{MatherV} showed that stability implies local stability and a smooth mapping $f\in C^\infty(N,P)$ is infinitesimally stable if and only if it is locally stable and $f|_{\Sigma(f)}$ is proper. 
It was shown in \cite{duPlessisVosegaard} that any strongly stable mapping is quasi-proper. 
Furthermore, Dimca \cite{Dimca} gave a necessary and sufficient condition for stability of a function $f\in C^\infty(\R,\R)$: 

\begin{theorem}[\cite{Dimca}]\label{T:Dimca stability R}
A locally stable function $f\in C^\infty(\R,\R)$ is stable if and only if the intersection $\Delta(f) \cap (Z(f|_{\Sigma(f)})\cup L(f))$ is empty, where $L(f)$ is defined as follows: 
\[
L(f) = \left\{y\in \R~\left|~y = \lim_{x\to \pm \infty} f(x) \right.\right\}. 
\]
\end{theorem}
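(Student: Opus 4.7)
My plan is to reduce this theorem to Theorem~\ref{T:suff condi stability}(1) by identifying Dimca's condition with the end-triviality condition $\Delta(f)\subseteq \tau(f)$, and then to produce explicit inequivalent perturbations for the necessity. The key reduction, flagged in the remark immediately after Theorem~\ref{T:suff condi stability}, is the set-theoretic identity
\[
\R\setminus \tau(f) = Z(f|_{\Sigma(f)}) \cup L(f)
\]
valid for any Morse function $f:\R\to \R$.

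I would prove the identity in two inclusions. If $y\in Z(f|_{\Sigma(f)})$, a sequence $x_n\to \pm\infty$ of critical points with $f(x_n)\to y$ shows that, for any compact $K$ and any choice of end-neighborhood $V$ avoiding $K$ and small neighborhood $W$ of $y$, the preimage $f^{-1}(W)\cap V$ still contains critical points of $f$, which is incompatible with the submersivity of the product projection required by condition~(2) of end-triviality. If $y\in L(f)$, say $f(x)\to y$ as $x\to +\infty$, then a tail $(M,+\infty)\subseteq f^{-1}(W)\cap V$ of infinite length maps by $f$ into a one-sided neighborhood of $y$, and counting the preimages over $y$ versus over nearby non-asymptotic values exhibits a fibre-cardinality jump that obstructs any trivializing $\Phi$. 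Conversely, if $y\notin Z(f|_{\Sigma(f)})\cup L(f)$, critical values do not accumulate at $y$ and no asymptote of $f$ equals $y$, so one can enlarge any given compact $K$ to a compact $K'$ containing every critical point of $f$ whose value lies in a small neighborhood $W$ of $y$; then $f$ restricted to $f^{-1}(W)\setminus K'$ is a proper submersion onto $W$ on each of its finitely many branches, and Ehresmann's fibration theorem supplies the trivialization.

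The sufficiency then follows: if Dimca's intersection is empty then $\Delta(f)\subseteq \tau(f)$, and since $f$ is Morse (equivalently locally stable), Theorem~\ref{T:suff condi stability}(1) yields stability. For the necessity, assume $y_0\in \Delta(f)\cap (Z(f|_{\Sigma(f)})\cup L(f))$ with $f(x_0)=y_0$, and for small $\varepsilon>0$ take a compactly supported bump perturbation $g$ that shifts the critical value at $x_0$ from $y_0$ to $y_0+\varepsilon$ while fixing every other critical value; such $g$ lies in every Whitney neighborhood of $f$, so stability of $f$ would produce an equivalence $\phi\circ g\circ \Phi = f$ with $\phi\in \Diff(\R)$. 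If $y_0\in Z(f|_{\Sigma(f)})$, the closure relation $\phi(\overline{\Delta(g)})=\overline{\Delta(f)}=\Delta(f)$ forces $\phi(y_0)\in \Delta(f)=\phi(\Delta(g))$, and injectivity of $\phi$ gives $y_0\in \Delta(g)$, contradicting the construction of $g$. If $y_0\in L(f)$, preservation of asymptotic values under target diffeomorphisms gives $\phi(y_0)=y_0$; but the remaining critical values do not cluster near $y_0$, so order-preservation of $\phi$ together with the finite combinatorics of $\Delta(g)$ versus $\Delta(f)$ near $y_0$ forces $\phi(y_0+\varepsilon)=y_0$ as well, again contradicting injectivity.

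The hard step is Step~1, and in particular the trivialization direction: the corresponding statement is delicate in higher-dimensional targets, so the one-dimensional bookkeeping of branches of $f^{-1}(W)$ is carrying real weight here. Once the identity is in hand, sufficiency is essentially a corollary of Theorem~\ref{T:suff condi stability}, while necessity becomes a rigidity argument about order-preserving diffeomorphisms of $\R$ forced to respect the discriminant together with the asymptotic and accumulation invariants at the ends.
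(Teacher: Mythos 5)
The paper does not actually prove this theorem: it is quoted from Dimca's 1979 paper, and the only related material here is the remark after Theorem~\ref{T:suff condi stability} that $\R\setminus \tau(f)=Z(f|_{\Sigma(f)})\cup L(f)$ for functions on $\R$, so that sufficiency follows from Theorem~\ref{T:suff condi stability}(1). Your sufficiency half is exactly that route and is sound in outline, though both inclusions of the identity are stated too loosely. For $y\in L(f)$ the obstruction is not a ``fibre-cardinality jump'' (both $f^{-1}(y)\cap V$ and $f^{-1}(w)\cap V$ can be countably infinite); it is that the tail $(M,+\infty)$ is connected, hence lies in a single slice $\{pt\}\times W$ under $\Phi^{-1}$, forcing $f$ to map a terminal interval diffeomorphically onto $W$, which is impossible when $\lim_{x\to+\infty}f(x)=y\in W$ (and if $f$ hits $y$ unboundedly often one falls back into the $Z(f|_{\Sigma(f)})$ case via Rolle). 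Conversely, the branches of $f^{-1}(W)\setminus K'$ need not be finitely many, and making each one map properly \emph{onto} $W$ requires shrinking $W$ away from the limits $\lim_{x\to\pm\infty}f(x)$ (this is precisely where $y\notin L(f)$ enters) and choosing $\partial K'$ outside $f^{-1}(\overline{W})$. These are repairable one-variable arguments.

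The genuine gap is in the necessity, in the case $y_0\in Z(f|_{\Sigma(f)})$. The equality $\overline{\Delta(f)}=\Delta(f)$ is false in general: $\Delta(f)'=Z(f|_{\Sigma(f)})$ need not be contained in $\Delta(f)$ (take critical values $\{0\}\cup\{1/k\}_{k\geq 1}\cup\{1+1/k\}_{k\geq 1}$, so that $1\in\overline{\Delta(f)}\setminus\Delta(f)$). Without it you only obtain $\phi(y_0)\in\overline{\Delta(f)}$, which yields nothing. Worse, no argument at the level of the sets $\Delta(f)$ and $\Delta(g)$ alone can close this case: a Morse function on $\R$ can have $\Delta(f)=\Q\cap[0,1]$, and then $\Delta(g)=(\Delta(f)\setminus\{y_0\})\cup\{y_0+\varepsilon\}$ \emph{is} carried onto $\Delta(f)$ by some homeomorphism of $\R$ (back-and-forth between countable dense orders). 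The contradiction must use the source diffeomorphism: $\Phi$ restricts to an order-isomorphism or anti-isomorphism of the discrete closed set $\Sigma(f)=\Sigma(g)$, which for order type $\omega$ is forced to be the identity and for order type $\zeta$ is a shift or reflection; combining $\phi(g(\Phi(c)))=f(c)$ for every critical point $c$ with continuity of $\phi$ along a sequence of critical values tending to $y_0$ then forces $\phi(y_0+\varepsilon)=\phi(y_0)$, contradicting injectivity. A smaller repair of the same kind is needed in the $L(f)$ case, where $\phi(y_0)=y_0$ is not automatic when both ends of $f$ have finite limits and the equivalence reverses orientation; the clean invariant there is the cardinality of the finite set $\Delta(f)\cap L(f)$, which your perturbation strictly decreases.
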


\noindent
As we mentioned in the introduction, the necessary and sufficient condition in this theorem (i.e.~$\Delta(f) \cap (Z(f|_{\Sigma(f)})\cup L(f))=\emptyset$) is equivalent to the condition $\Delta(f)\subset\tau(f)$ (the sufficient condition for stability in the main theorem). 
We can indeed show that the complement $\R\setminus \tau(f)$ is equal to $Z(f|_{\Sigma(f)})\cup L(f)$ for $f\in C^\infty(\R,\R)$.

\section{A sufficient condition for stability of smooth functions}\label{Se:proof main thm}

In this section we will first prove (1) of Theorem~\ref{T:suff condi stability}. 
In the proof, for a given function $f$ satisfying the assumption in (1), we will construct several diffeomorphisms so that the composition of them with a function close to $f$ coincides with $f$.  
We will then observe that the algorithm for constructing diffeomorphisms also guarantees strong stability of quasi-proper Morse functions.  
Throughout the paper, we denote the open ball in the Euclidean space with radius $r$ centered at the origin by $B(r)$.

\begin{lemma}\label{T:arbitrariness nbd end}

A mapping $f\in C^\infty(N,P)$ is end-trivial at $y\in P$ if and only if, for any compact subset $K\subset N$, there exist an open neighborhood $V$ of the end of $N$ with $K\subset N\setminus V$ and an open neighborhood $W\subset P$ of $y$ satisfying the conditions (1) and (2) in the definition of end-triviality. 

\end{lemma}

\noindent
In other words, one can take $V$ in the definition of end-triviality "as small as desired". 
As the "if" direction of the lemma is obvious, we will prove the other direction below: 

\begin{proof}
Since we will discuss local behavior of $f$ around the fiber $f^{-1}(y)$, we can assume $P=\R^p$ and $y=0$ without loss of generality. 
By the assumption, there exist an open neighborhood $V'$ of the end of $N$, an open neighborhood $W'\subset \R^p$ of the origin and $\Phi:(f^{-1}(0)\cap V')\times W'\to f^{-1}(W')\cap V'$ satisfying the conditions in the definition of end-triviality. 
By composing a self-diffeomorphism $(p_1\circ \Phi^{-1}|_{f^{-1}(0)\cap V'})\times \id$ of $(f^{-1}(0)\cap V')\times W'$ to $\Phi$ if necessary, we can assume that $\Phi|_{(f^{-1}(0)\cap V')\times \{0\}}$ is the projection to the first component. 

Let $K\subset N$ be a compact subset. 
We take a proper function $h:N\to \R$ and let $L_m = \{x\in N~|~ \left|h(x)\right|\leq m\}$ for $m\geq 0$. 
Since $N\setminus V'$ and $K$ are both compact, there exists $m_0\in\N$ with  $(N\setminus V')\cup K\subset \Int(L_{m_0})$. 
We put $K_1 = L_{m_0}$ and $K_2=L_{m_0+1}$.
Note that these are both compact and $K_1\subset \Int(K_2)$. 
In what follows, we will show that the following conditions hold for a sufficiently small $\delta >0$: 

\begin{enumerate}

\item
$B(\delta)\subset W'$. 
\item
$\Phi((f^{-1}(0)\setminus \Int(K_2))\times \Cl{B(\delta)})\cap K_1=\emptyset$. 

\item
$\Phi((f^{-1}(0)\cap K_1\cap V')\times \Cl{B(\delta)})\subset \Int(K_2)$. 

\end{enumerate}

\noindent
The condition (1) obviously holds for a small $\delta>0$. 
Suppose first that there does not exist $\delta$ satisfying the condition (2). 
We can then take a point $x_m\in \Phi((f^{-1}(0)\setminus \Int(K_2))\times \Cl{B(1/m)})\cap K_1$ for any $m\in \N$. 
As $K_1$ is compact, $\{x_m\}_{m\in \N}$ has a cluster point $x\in K_1$. 
Since $\norm{f(x_m)}\leq 1/m$, $f(x)$ is equal to $0$. 
We can deduce from this observation, together with closedness of $f^{-1}(0)\setminus \Int(K_2)$, that $x$ is contained in $\Phi((f^{-1}(0)\setminus \Int(K_2))\times \{0\})$, which is equal to $f^{-1}(0)\setminus \Int(K_2)$ by the assumption on $\Phi$. 
This contradicts the condition $K_1\subset \Int(K_2)$. 
Suppose next that there is no $\delta$ satisfying the condition (3). 
For any $m\in \N$, we can take $(w_m,t_m)\in (f^{-1}(0)\cap K_1\cap V')\times \Cl{B(1/m)}$ so that the image $\Phi(w_m,t_m)$ is not contained in $\Int(K_2)$. 
By the assumption on $\Phi$, $\Phi(w_m,0)$ is equal to $w_m$, in particular it is contained in $K_1$. 
By the intermediate value theorem, there exists $s_m\in B(\norm{t_m})$ such that $h(\Phi(w_m,s_m)) = m_0+1/2$ for any $m\in \N$. 
Since $\Phi(w_m,s_m)$ is contained in the compact subset $h^{-1}(m_0+1/2)$, the sequence $\{\Phi(w_m,s_m)\}_{m\in \N}$ has a cluster point $x'\in h^{-1}(m_0+1/2)$. 
As $f\circ \Phi(w_m,s_m) = s_m$ and $\norm{s_m} \leq 1/m$, $x'$ is contained in $f^{-1}(0)\setminus K_1\subset f^{-1}(0)\setminus V'$. 
Thus, we can take a neighborhood $U\subset (f^{-1}(0)\setminus K_1)\times W'$ of $(x',0)$ so that $\Phi(U)$ is a neighborhood of $x'$ and contained in the open set $\Int(K_2)\setminus K_1$. 
In particular, $\Phi(w_m,s_m)$ is contained in $\Phi(U)$ for a sufficiently large $m$, which contradicts that $\Phi$ is a homeomorphism (especially injective). 

Using $\delta>0$ satisfying the conditions above, we define $L$ and $V$ as follows: 
{\allowdisplaybreaks
\begin{align*}
&L = p_1\circ \Phi^{-1}\left(f^{-1}(\Cl{B(\delta)})\cap K_2\setminus \Int(K_1)\right), \\
&V = N\setminus \left(\Phi(L\times \Cl{B(\delta)})\cup K_1\right). 
\end{align*}
}%
It is easy to check that $L$ is compact and $V$ is an open neighborhood of the end of $N$. 
In what follows, we will show that $V$ and $W = B(\delta)$, together with the restriction of $\Phi$, satisfy the desired conditions. 

As $f^{-1}(0)\cap V$ is a subset of $f^{-1}(0)\cap V'$, it does not contain any critical point of $f$. 
We first show that the image of $(f^{-1}(0)\cap V)\times B(\delta)$ by $\Phi$ is contained in $f^{-1}(B(\delta))\cap V$. 
Let $(x,q)\in (f^{-1}(0)\cap V)\times B(\delta)$. 
Since $f\circ \Phi=p_2$, $\Phi(x,q)$ is contained in $f^{-1}(B(\delta))$. 
As $\Phi|_{(f^{-1}(0)\cap V')\times \{0\}}$ is assumed to be the projection to the first component, the following holds: 
\[
f^{-1}(0)\cap V=f^{-1}(0)\setminus (K_1\cup L) \mbox{ and } f^{-1}(0)\cap K_2\setminus \Int(K_1)\subset f^{-1}(0)\cap L. 
\] 
Thus, $f^{-1}(0)\cap V$ is contained in $f^{-1}(0)\setminus K_2$, and $\Phi(x,q)$ is not contained in $\Phi(L\times \Cl{B(\delta)})$ and $K_2\setminus \Int(K_1)$. 
Furthermore, since $x$ is in $f^{-1}(0)\cap V \subset f^{-1}(0)\setminus K_2$, $\Phi(x,q)$ is not contained in $K_1$ by the condition (2) above. 
We can eventually conclude that $\Phi(x,q)$ is contained in $f^{-1}(B(\delta))\cap V$. 

We next show that the restriction $\Phi:(f^{-1}(0)\cap V)\times B(\delta) \to f^{-1}(B(\delta))\cap V$ is surjective, and thus a diffeomorphism. 
Let $w\in f^{-1}(B(\delta))\cap V$. 
Since $f^{-1}(B(\delta))\cap V$ is contained in $f^{-1}(W')\cap V'$, there exists $w'\in f^{-1}(0)\cap V'$ with $w = \Phi(w',f(w))$. 
As $w$ is not contained in $\Phi\left(L\times \Cl{B(\delta)}\right)$, $w'$ is not contained in $L$. 
By the condition (3) above, $w$ would be contained in $K_2$ if $w'$ is in $K_1$, but this contradicts the condition that $w$ is not contained in $\Phi\left(L\times \Cl{B(\delta)}\right)$ since it contains $ f^{-1}(\Cl{B(\delta)})\cap K_2\setminus \Int(K_1)$. 
Therefore, $w'$ is contained in $f^{-1}(0)\setminus (K_2\cup L) = f^{-1}(0)\cap V$, completing the proof of Lemma~\ref{T:arbitrariness nbd end}.  
\end{proof}

Let $N$ be an $n$-dimensional manifold and $f\in C^\infty(N)$. 
Suppose that $f$ is locally stable and $\Delta(f)\subset \tau(f)$. 
We put $\Sigma=\Sigma(f)$ and $\Delta=\Delta(f)$.

\begin{lemma}\label{T:countable discrete Delta}

The set $\Delta\subset \R$ is countable and discrete. 

\end{lemma}

\begin{proof}
Since $f$ is locally stable, $f$ has a non-degenerate Hessian at any point in $\Sigma$. 
We can thus deduce from the Morse lemma that $\Sigma\subset N$ is closed and discrete.
In particular, the intersection $\Sigma\cap K$ is finite for any compact subset $K\subset N$. 
Since $N$ is covered by a countable system of compact subsets, the set $\Sigma$ and $\Delta=f(\Sigma)$ are countable. 

By the definition of end-triviality, for any $y\in \tau(f)$, there exist a compact subset $K\subset N$ and a neighborhood $W\subset \R$ of $y$ such that $f^{-1}(W)\setminus K$ does not contain any critical point of $f$, in particular the number of critical points in $f^{-1}(W)$ is finite.  
Since $\Delta$ is contained in $\tau(f)$ by the assumption, each point $y\in \Delta$ has a neighborhood $W\subset \R$ with $W\cap \Delta$ finite, and thus $\Delta$ is discrete.
\end{proof}

\noindent
We put $\Delta = \{y_i\}_{i\in \mathcal{I}}$, where $\mathcal{I}\subset \N$, and let $x_i\in \Sigma$ be the critical point of $f$ with $f(x_i)=y_i$.
Let $K_0=\emptyset$ and we take a sequence $\{K_m\}_{m\in \N}$ of codimension-$0$ compact submanifolds of $N$ satisfying the following conditions: 

\begin{itemize}

\item 
$\D\bigcup_{m\in \N} K_m = N$. 

\item 
$K_m\subset \Int(K_{m+1})$ for any $m\in \N$. 

\item 
$x_i\in \Int(K_i)\setminus K_{i-1}$ for any $i\in \mathcal{I}$. 

\end{itemize}
By the assumption, $y_i$ is contained in $\tau(f)$ for each $i\in \mathcal{I}$.
By Lemma~\ref{T:arbitrariness nbd end}, we can take a neighborhood $V_i$ of the end of $N$ and $\nu_i>0$ satisfying the following conditions\footnote{We denote by $f^{-1}(y_i-2\nu_i,y_i+2\nu_i)$ the preimage of the open interval $(y_i-2\nu_i,y_i+2\nu_i)\subset \R$ by $f$. Although it should be denoted by $ f^{-1}\left((y_i-2\nu_i,y_i+2\nu_i)\right)$, we will omit a pair of parentheses throughout the paper for simplicity.}:

\begin{itemize}

\item
$K_i\subset N\setminus V_i$. 

\item
There exists a diffeomorphism
\[
\Phi_i:(f^{-1}(y_i)\cap V_i)\times (y_i-2\nu_i,y_i+2\nu_i)\to f^{-1}(y_i-2\nu_i,y_i+2\nu_i)\cap V_i
\]
such that the composition $f\circ \Phi_i$ is the projection to the second component.

\item
$f^{-1}(y_i-2\nu_i,y_i+2\nu_i)\cap V_i=\emptyset$ if $y_i$ is not contained in $Z(f)$. 

\end{itemize}

\noindent
Since the complement $N\setminus V_i$ is compact, $N\setminus V_i$ is contained in $K_{d(i)-1}$ for some $d(i)\in \N$, which is larger than $i$.
Since the set $\Delta$ is discrete by Lemma~\ref{T:countable discrete Delta} and $f$ is locally stable, by replacing $\nu_i$'s with smaller ones and restricting $\Phi_i$ accordingly, we can assume the following conditions without loss of generality:  

\begin{itemize}

\item
$\nu_i <1/4$, 

\item
$(y_i-2\nu_i,y_i+2\nu_i)\cap (y_j-2\nu_j,y_j+2\nu_j)=\emptyset$ for $i\neq j$. 

\item
There exists a relatively compact coordinate neighborhood $(U_i,\varphi_i)$ around $x_i$ such that $\varphi_i(U_i)=B(\nu_i)$, $f\circ\varphi_i^{-1}(w_1,\ldots,w_n) = \pm w_1^2\pm \cdots \pm w_n^2 + y_i$, and $U_i$ is contained in $\Int (K_i)\setminus K_{i-1}$.
Note that $U_i$ is also contained in $f^{-1}(y_i-\nu_i/2,y_i+\nu_i/2)$ since $\nu_i<1/4$. 

\end{itemize}

In order to construct diffeomorphisms of $N$ and $P$ for each mapping close to $f$, and guaranteeing continuity of the resulting mappings (to $\Diff(N)$ and $\Diff(\R)$), we need to take a suitable system of coordinate neighborhoods of $N$ as follows:  

\begin{lemma}\label{T:existence system cdnt nbhd}

There exist a system of coordinate neighborhoods $\{(U_\alpha,\varphi_\alpha)\}_{\alpha\in A}$ of $N$ satisfying the following conditions: 

\begin{enumerate}[(U1)]

\item 
$U_\alpha$ is relatively compact.

\item 
The index set $A$ contains $\mathcal{I}$ (that is, $(U_i,\varphi_i)$ taken above is in the system). 

\item 
If $U_\alpha$ is contained in $f^{-1}(y_i-\nu_i,y_i+\nu_i)$ for some $i\in \mathcal{I}$, either of the followings holds: 

\begin{itemize}

\item[\textit{(U3-1)}]
$U_\alpha \subset \Int (K_{d(i)})$.

\item[\textit{(U3-2)}]
There exists a coordinate neighborhood $(W_\alpha,\phi_\alpha)$ of $f^{-1}(y_i)\cap V_i$ (which is a manifold since it contains no critical point of $f$) such that $U_\alpha$ is equal to $\Phi_i\left(W_\alpha \times (y_i-\nu_i,y_i+\nu_i)\right)$ and $\varphi_\alpha=(\phi_\alpha\times \id)\circ \Phi_i^{-1}$.

\end{itemize}

\end{enumerate}

\noindent
Let $M$ be the union $\cup_{i\in \mathcal{I}} f^{-1}(y_i-\nu_i,y_i+\nu_i)$. 

\begin{enumerate}[(U1)]
\setcounter{enumi}{3}

\item 
If $U_\alpha$ is not contained in $M$, then for any $i\in \mathcal{I}$ with $U_\alpha \cap f^{-1}([y_i-\nu_i/2,y_i+\nu_i/2])\neq \emptyset$, $U_\alpha$ is contained in $K_i\setminus U_{i}$.

\item 
For each $i\in \mathcal{I}$, there exist only finitely many $\alpha$'s satisfying the condition $U_\alpha \cap N_i\neq \emptyset$, where $N_i=f^{-1}([y_i-\nu_i/2,y_i+\nu_i/2])\cap K_{d(i)+1}\setminus \Int(K_{d(i)})$. 

\item 
For any $i\in \mathcal{I}$ and $\alpha\neq i$, $U_\alpha\cap \varphi_i^{-1}(\Cl{B(\nu_i/2)})=\emptyset$. 

\end{enumerate}

\end{lemma}

\begin{proof}
The set $N_i$ is compact and contained in $f^{-1}(y_i-\nu_i,y_i+\nu_i)\cap V_i$, that is, the target of $\Phi_i$, where $N_i=f^{-1}([y_i-\nu_i/2,y_i+\nu_i/2])\cap K_{d(i)+1}\setminus \Int(K_{d(i)})$, as defined in the condition (U5). 
Let $Z_i\subset f^{-1}(y_i)\cap V_i$ be the image of this compact set by $p_1\circ \Phi_i^{-1}$ (where $p_1$ is the projection to the first component), which is also compact. 
We take a system $\{(W_\beta,\phi_\beta)\}_{\beta \in B_i}$ of coordinate neighborhoods of the manifold $f^{-1}(y_i)\cap V_{i}$ so that $\Cl{W_\beta}\subset f^{-1}(y_i)\cap V_{i}$ is compact for each $\beta \in B_i$, and $Z_i\cap W_\beta\neq \emptyset$ for only finitely many $\beta \in B_i$. 
We can take such a system by first taking a finite system $\{(W_{\beta_m},\phi_{\beta_m})\}_{m=1}^k$ covering $Z_i$, and then taking a coordinate neighborhood for each point in the complement of $\cup_{m=1}^k W_{\beta_m}$ so that it is away from $Z_i$.
For $i\in \mathcal{I}$ and $\beta \in B_i$, we put $U_\beta=\Phi_i\left(W_{\beta} \times (y_i-\nu_i,y_i+\nu_i)\right)$ and $\varphi_\beta=(\phi_\beta\times p_1) \circ \Phi_i^{-1}|_{U_\beta}$.
Note that $\{U_\beta\}_{\beta\in B_i}$ is a covering of $f^{-1} (y_i-\nu_i,y_i+\nu_i) \setminus V_i$. 

Let $C_i = f^{-1}(y_i-\nu_i,y_i+\nu_i)\cap K_{d(i)-1}\setminus U_{i}$. 
For any $x\in C_i$, we take a relatively compact coordinate neighborhood $(U_x,\varphi_x)$ around $x$ so that $U_x$ is contained in the following set: 
\[
\Int(K_{d(i)})\cap f^{-1}(y_i-\nu_i,y_i+\nu_i)\setminus \varphi_i^{-1}(\Cl{B(\nu_i/2)}).
\]
Note that $\{U_x\}_{x\in C_i}$ is a covering of $C_i$. 
Thus, the preimage $f^{-1}(y_i-\nu_i,y_i+\nu_i)$ is contained in the following union: 
\[
U_i \cup \left(\bigcup_{\beta\in B_i} U_\beta\right) \cup \left(\bigcup_{x\in C_i} U_x\right). 
\]

Suppose that $x\in N$ is not contained in $f^{-1}(\Cl{\Delta})\cup M$, where $M$ is the union $\cup_{i\in \mathcal{I}} f^{-1}(y_i-\nu_i,y_i+\nu_i)$ (as defined in the lemma). 
The following functions have the minimum values: 
{\allowdisplaybreaks
\begin{align*}
&h_+:\Cl{\Delta}\cap [f(x),\infty) \to \R, \hspace{.5em} h_+(y) = y-f(x), \\
&h_-:\Cl{\Delta}\cap (-\infty,f(x)] \to \R, \hspace{.5em} h_-(y) = f(x)-y.
\end{align*}
}%
We denote the minimizer of $h_\pm$ by $y_\pm \in \R$. 
We take $\tilde{y}_\pm \in \R$ as follows:
\[
\tilde{y}_\pm = \begin{cases}
y_i \mp \nu_i/2 & (y_\pm =y_i \in \Delta \mbox{ for some }i\in \mathcal{I}),\\
y_\pm & (y_\pm \not\in \Delta). 
\end{cases}
\] 
By the assumption, $\tilde{y}_+$ (resp.~$\tilde{y}_-$) is strictly larger (resp.~smaller) than $f(x)$. 
Thus, the open interval $(\tilde{y}_-,\tilde{y}_+)\subset \R$ contains $f(x)$. 
Furthermore, the interval $(\tilde{y}_-,\tilde{y}_+)$ does not intersect with $[y_j-\nu_j/2,y_j+\nu_j/2]$ for any $j\in \mathcal{I}$. 
To prove this, we assume that $(\tilde{y}_-,\tilde{y}_+)\cap [y_j-\nu_j/2,y_j+\nu_j/2]$ contains $y'\in \R$. 
First, $y'$ is not equal to $f(x)$ since $f(x)$ is not in $[y_j-\nu_j/2,y_j+\nu_j/2]$. 
Suppose that $y'$ is greater than $f(x)$. 
Since $y_+$ is the minimizer of $h_+$, $y_j$ is either less than $f(x)$ or greater than $y_+$($\geq \tilde{y}_+$). 
If the value $y_j$ is less than $f(x)$, the interval $[y_j-\nu_j/2,y_j+\nu_j/2]$ would contain both $y_j<f(x)$ and $y'>f(x)$, and thus contain $f(x)$, contradicting the condition $x\not\in M$. 
Hence, $y_j$ is greater than $\tilde{y}_+$, and the interval $[y_j-\nu_j/2,y_j+\nu_j/2]$ contains $y'<\tilde{y}_+$, $y_j >\tilde{y}_+$, and thus $\tilde{y}_+$. 
Note that $\tilde{y}_+$ is either $y_i-\nu_i/2$ (if $y_+=y_i$ for some $i\in \mathcal{I}$) or $y_+$ (if $y_+\not\in \Delta$). 
Since $[y_j-\nu_j/2,y_j+\nu_j/2]\cap [y_i-\nu_i/2,y_i+\nu_i/2]$ is empty, $y_i-\nu_i/2$ is not in $[y_j-\nu_j/2,y_j+\nu_j/2]$, and thus $\tilde{y}_+=y_+ \not\in \Delta$. 
However, the interval $(y_j-2\nu_j,y_j+2\nu_j)$ contains only one critical value $y_j$ of $f$ and $\tilde{y}_+$ is a cluster point of $\Delta$, $y_+$ is not in $[y_j-\nu_j/2,y_j+\nu_j/2]$, either, leading to a contradiction. 
Therefore, $y'$ is not greater than $f(x)$. 
We can also show that $y'$ is not smaller than $f(x)$ in a similar manner, completing the proof that $(\tilde{y}_-,\tilde{y}_+)$ does not intersect with $[y_j-\nu_j/2,y_j+\nu_j/2]$ for any $j\in \mathcal{I}$. 
We take a relatively compact coordinate neighborhood $(U_x,\varphi_x)$ around $x$ above so that $\Cl{U_x}$ is contained in $f^{-1}(\tilde{y}_-,\tilde{y}_+)$. 
Note that $U_x$ does not intersect with $f^{-1}([y_i-\nu_i/2,y_i+\nu_i/2])$ for any $i\in \mathcal{I}$. 

Suppose that $x$ is in the closed subset $f^{-1}(\Cl{\Delta})\setminus M$. 
We take $c\in \N$ so that $x$ is contained in $K_c\setminus K_{c-1}$. 
Let $j_1,\ldots, j_e\in \mathcal{I}$ be the numbers in $\mathcal{I}$ which are less then or equal to $c+1$. 
The union $\cup_{m=1}^e [y_{j_m}-\nu_{j_m}/2,y_{j_m}+\nu_{j_m}/2]$ is a closed subset of $\R$ and does not contain $f(x)$ by the assumption. 
We take an open interval $J\subset \R$ so that $f(x)\in J$ and $J$ is away from $\cup_{m=1}^e [y_{j_m}-\nu_{j_m}/2,y_{j_m}+\nu_{j_m}/2]$. 
We also take a relatively compact coordinate neighborhood $(U_x,\varphi_x)$ around $x$ so that $U_x$ is contained in the following open set: 
\[
f^{-1}(J) \cap \Int(K_{c+1})\setminus K_{c-1}. 
\]
For each $m\in\N$, the set $f^{-1}(\Cl{\Delta})\cap K_m\setminus (M\cup \Int(K_{m-1}))$ is compact. 
Thus, there exists a finite subset $D_m\subset f^{-1}(\Cl{\Delta})\cap K_m\setminus (M\cup \Int(K_{m-1}))$ such that $\{U_x\}_{x\in D_m}$ is a covering of $f^{-1}(\Cl{\Delta})\cap K_m\setminus (M\cup \Int(K_{m-1}))$. 
We denote the union $\cup_{m\in \N} D_m$ by $D$.
Note that $\{U_x\}_{x\in D}$ is a covering of $f^{-1}(\Cl{\Delta})\setminus M$. 
We can eventually define the index set $A$ for a desired system as follows: 
\[
A = \mathcal{I} \sqcup \left(\bigcup_{i\in \mathcal{I}} B_i\right)\sqcup \left(\bigcup_{i\in \mathcal{I}} C_i\right)\sqcup \left(N\setminus \left(f^{-1}(\Cl{\Delta})\cup M\right)\right)\sqcup D. 
\]
One can deduce from what we have observed that $\{U_\alpha\}_{\alpha \in A}$ is a covering of $N$. 
In what follows, we will check that the system $\{(U_\alpha,\varphi_\alpha)\}_{\alpha\in A}$ satisfies the conditions (U1)--(U6) in Lemma~\ref{T:existence system cdnt nbhd}. 

The condition (U2) obviously holds. 
The closure $\Cl{U_\beta}$ is compact for each $\beta \in B_i$ as it is homeomorphic to $\Cl{W_\beta}\times [y_i-\nu_i,y_i+\nu_i]$. 
Thus, $U_\beta$ is relatively compact. 
Since the other coordinate neighborhoods were taken so that these are relatively compact, the condition (U1) holds for $\{(U_\alpha,\varphi_\alpha)\}_{\alpha\in A}$.  

In order to show that the condition (U3) holds, we assume that $U_\alpha$ is contained in $f^{-1}(y_i-\nu_i,y_i+\nu_i)$ for some $i$. 
The index $\alpha$ is $i$, or contained in either $B_i$ or $C_i$. 
Since $U_i$ is contained in $K_i\subset K_{d(i)}$, the condition (U3-1) holds if $\alpha = i$. 
Furthermore, we can deduce from the construction above that $U_\alpha$ satisfies the condition (U3-1) (resp.~(U3-2)) if $\alpha$ is in $C_i$ (resp.~$B_i$). 
We can thus conclude that (U3) holds for $\{(U_\alpha,\varphi_\alpha)\}_{\alpha\in A}$. 

Suppose that $U_\alpha$ is not contained in $M$.
The index $\alpha$ is in either the complement $N\setminus \left(f^{-1}(\Cl{\Delta})\cup M\right)$ or $D$. 
The neighborhood $U_\alpha$ is away from $f^{-1}([y_i-\nu_i/2,y_i+\nu_i/2])$ for any $i\in \mathcal{I}$ if $\alpha \in N\setminus \left(f^{-1}(\Cl{\Delta})\cup M\right)$. 
We assume that $\alpha=x \in D$ and $U_\alpha\cap f^{-1}([y_i-\nu_i/2,y_i+\nu_i/2])\neq \emptyset$. 
Let $c\in \N$ be the number with $x\in K_c\setminus K_{c-1}$. 
By the construction above, $i$ is greater than $c+1$. 
Since $U_x$ is contained in $\Int(K_{c+1})$, it is also contained in $K_{i-1}$. 
Since $U_{i}$ is contained in $\Int(K_i)\setminus K_{i-1}$, $U_x$ is away from $U_{i}$, concluding that (U4) holds for $\{(U_\alpha,\varphi_\alpha)\}_{\alpha\in A}$.

Let $i,j\in \mathcal{I}$ be numbers with $i\neq j$. 
The intersection $N_i\cap U_{j}$ is empty. 
For $\alpha \in B_j\cap C_j$, $N_i\cap U_\alpha$ is also empty since $U_\alpha$ is contained in $f^{-1}(y_j-\nu_j,y_j+\nu_j)$. 
By the construction, there are only finitely many $\alpha \in B_i$ with $N_i\cap U_\alpha\neq \emptyset$ (i.e.~$\beta_1,\ldots,\beta_k\in B_i$ given above). 
For $\alpha\in C_i$, the intersection $N_i\cap U_\alpha$ is empty since $U_\alpha$ is contained in $\Int(K_{d(i)})$. 
Since $N_i$ is not contained in $K_i$, we can deduce from the condition (U4) that $U_\alpha \cap N_i=\emptyset$ if $U_\alpha \not\subset M$, that is, $\alpha$ is in $\left(N\setminus \left(f^{-1}(\Cl{\Delta})\cup M\right)\right)\cup D$. 
Hence, the condition (U5) holds for $\{(U_\alpha,\varphi_\alpha)\}_{\alpha\in A}$.

For any $i\in \mathcal{I}$, $\varphi_i^{-1}(\Cl{B(\nu_i/2)})$ and $U_{i}$ are contained in $\Int(K_i)\cap f^{-1}(y_i-\nu_i/2,y_i+\nu_i/2)\setminus K_{i-1}$. 
In particular $U_{j}\cap \varphi_i^{-1}(\Cl{B(\nu_i/2)})=\emptyset$ for $j\neq i$ (note that $(y_i-2\nu_i,y_i+2\nu_i)\cap (y_j-2\nu_j,y_j+2\nu_j)$ is empty). 
The set $U_\beta$ is away from $\varphi_i^{-1}(\Cl{B(\nu_i/2)})$ for any $j\in \mathcal{I}$ and $\beta \in B_j$ since $U_\beta$ is contained in $V_j\cap f^{-1}(y_j-\nu_j,y_j+\nu_j)$ and $V_j\cap K_j=\emptyset$.
For $x \in C_j$, $U_x$ is contained in $f^{-1}(y_j-\nu_j,y_j+\nu_j)\setminus \varphi_j^{-1}(\Cl{B(\nu_j/2)})$, and thus $U_x \cap \varphi_i^{-1}(\Cl{B(\nu_i/2)})=\emptyset$. 
The intersection $U_x \cap \varphi_i^{-1}(\Cl{B(\nu_i/2)})$ is empty for any $x\in N\setminus \left(f^{-1}(\Cl{\Delta})\cup M\right)$ since $U_x$ is away from $f^{-1}([y_j-\nu_j/2,y_j+\nu_j/2])$ for any $j\in \mathcal{I}$. 
Lastly, for $x\in D$, we take $c\in \N$ and $j_1,\ldots, j_e\in \mathcal{I}$ as above (i.e.~$x\in K_c\setminus K_{c-1}$ and $j_1,\ldots, j_e\leq c+1$). 
By the construction, $U_x$ is contained in $\Int(K_{c+1})$ and away from $f^{-1}([y_{j_m}-\nu_{j_m}/2,y_{j_m}+\nu_{j_m}/2])$ for $m=1,\ldots, e$. 
In particular $U_x$ does not intersect with $\varphi_i^{-1}(\Cl{B(\nu_i/2)})$. 
We can thus conclude that (U6) holds for $\{(U_\alpha,\varphi_\alpha)\}_{\alpha\in A}$, completing the proof of Lemma~\ref{T:existence system cdnt nbhd}.
\end{proof}

Let $\{(U_\alpha,\varphi_\alpha)\}_{\alpha\in A}$ be a system of coordinate neighborhoods of $N$ satisfying the conditions (U1)--(U6) in Lemma~\ref{T:existence system cdnt nbhd}. 
We put $U = \cup_{i\in \mathcal{I}} U_{i}$ and $\mathcal{V}_0 = C^\infty(N)$. 
Since $N$ is paracompact, there exists a locally finite covering $\{L_\alpha\}_{\alpha\in A}$ consisting of closed subsets $L_\alpha\subset U_\alpha$ (\cite[Lemma 5.1.6]{Engelking}).
Note that $L_\alpha$ is compact by the condition (U1). 
We define a subspace $\mathcal{V}_k$~($k=1,2,3,4$) of $C^\infty(N)$ as follows:
{\allowdisplaybreaks
\begin{align*}
\mathcal{V}_1 = &\{g\in C^\infty(N)~|~ g|_{N-U}=f|_{N-U}\}, \\
\mathcal{V}_2 = &\{g\in C^\infty(N)~|~\Delta(g) =\Delta\}, \\
\mathcal{V}_3 = &\{g\in C^\infty(N)~|~ \Delta(g) = \Delta,\hspace{.3em} \Sigma(g) = \Sigma\},\\
\mathcal{V}_4 = &\{f\}.  
\end{align*}
}
\noindent
We will prove (1) of Theorem~\ref{T:suff condi stability} by showing the following Claim~$k$~($k=1,2,3,4$):

\V{.8em}

\noindent
{\bf Claim~$\boldsymbol{k}$.}
There exist an open neighborhood $\mathcal{U}_{k}\subset \mathcal{V}_{k-1}$ of $f$ and a continuous mapping $\theta_k:\mathcal{U}_k\to \mathcal{V}_k$ such that $\theta_k(f)=f$ and $g$ is $\mathcal{A}$-equivalent to $\theta_k(g)$ for any $g\in \mathcal{U}_k$. 

\V{.8em}

\noindent
Note that the statement (1) of Theorem~\ref{T:suff condi stability} immediately follows from Claims~$1,\ldots,4$ (any mapping $g\in \mathcal{U}=(\theta_3\circ \theta_2\circ \theta_1)^{-1}(\mathcal{U}_4)$ is $\mathcal{A}$-equivalent to $f$). 

\subsection*{Proof of Claim~1}
We will use the following lemma:

\begin{lemma}[{\cite[Theorem 3.6.1]{duPlessisWall}}]\label{T:strongstabilitysubmersion}
Let $h:N\to P$ be $C^r$-mapping ($1\leq r\leq \infty$) and $U\subset N$ be an open neighborhood of $\Sigma(h)$. 
There exists an open neighborhood $\mathcal{U}\subset C^\infty(N,P)$ of $h$ with respect to the $C^1$-topology and a mapping $\beta:\mathcal{U}\to \Diff(N)$ which is continuous with respect to the $C^s$-topologies for any $s\in \{1,\ldots, r\}$ which satisfy the following conditions: 

\begin{itemize}
	
\item 
$\beta(h)$ is equal to the identity $\id_N$.

\item 
$g\circ \beta(g)|_{N-U} $ is equal to $h|_{N-U}$ for any $g\in \mathcal{U}$.
	
\item
$\beta(g)(x)=x$ for any $x\in N-U$ with $h(x)=g(x)$. 	
	
\end{itemize}
\end{lemma}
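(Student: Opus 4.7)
The plan is to apply the Thom--Mather isotopy construction with parameters to the submersion $f|_{N\setminus \Sigma(f)}$, adapted so that the resulting isotopy depends continuously on $g$. First, shrink $U$ slightly: choose an open set $W$ with $\overline{N\setminus U}\subset W$ and $\overline{W}\cap \Sigma(f)=\emptyset$, so that $f|_W$ is a submersion, and a smooth cutoff $\rho:N\to[0,1]$ with $\rho\equiv 1$ on a neighborhood of $N\setminus U$, $\supp(\rho)\subset W$, and $\rho\equiv 0$ near $\Sigma(f)$. Fix also an auxiliary Riemannian metric on $N$ and, when $P$ is not linear, a tubular neighborhood of $P$ in some $\R^m$ so that convex combinations of maps close to $f$ still land in $P$ after the tubular retraction.

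Second, for $g$ sufficiently close to $f$ in the Whitney $C^1$--topology, form the straight-line homotopy $g_t=(1-t)f+tg$ for $t\in[0,1]$. For $g$ close enough to $f$, each $g_t|_W$ is a submersion depending continuously on $g$. On $W$, let $Y_t(g)$ be the unique vector field characterized by $Y_t(g)\perp \ker(dg_t)$ and $dg_t(Y_t(g))=f-g$; extend by $0$ to all of $N$ and multiply by $\rho$ to obtain a time-dependent vector field $\widetilde Y_t(g)$ on $N$ that vanishes both near $\Sigma(f)$ and on $\{f=g\}$, and that depends continuously on $g$ in each $C^s$--topology with $s\le r$.

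Third, define $\beta(g)$ to be the time-$1$ flow of $\widetilde Y_t(g)$. After shrinking $\mathcal U$ if necessary so that the flow exists on all of $[0,1]\times N$ and keeps orbits starting in $N\setminus U$ inside $\{\rho=1\}$, the three required properties fall out: $\beta(f)=\id_N$ because $\widetilde Y_t(f)\equiv 0$; on $N\setminus U$ the cutoff $\rho$ equals $1$ along each orbit, so the defining ODE yields $\frac{d}{dt}(g_t\circ\beta_t)=0$ and hence $g\circ \beta(g)=f$ there; and if $x\in N\setminus U$ satisfies $f(x)=g(x)$ then $\widetilde Y_t(g)(x)=0$ for all $t$, so $x$ is fixed.

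The main obstacle is the global and continuous nature of the construction: (i) ensuring that the time-$1$ flow of $\widetilde Y_t(g)$ is well-defined on \emph{all} of $N$ when $g$ is only assumed close to $f$ compact-by-compact in $\tau W^1$, and (ii) arranging that $\mathcal U$ is genuinely open in $\tau W^1$ (not merely in the very strong topology of the remark following Theorem~\ref{T:basis Whitney topology}). Both are handled by choosing $\mathcal U$ of the form $N_1(f,K,\varphi,\psi,\varepsilon)$ from Theorem~\ref{T:basis Whitney topology} with $\{\varepsilon_\alpha\}$ shrinking fast enough at infinity to give uniform $C^1$--control of $\widetilde Y_t(g)$ on each $K_\alpha$, and by appealing to the standard continuous dependence of ODE flows on their generating vector fields.
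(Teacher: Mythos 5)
The paper does not prove this lemma: it is quoted verbatim from du~Plessis and Wall \cite[Theorem 3.6.1]{duPlessisWall} and used as a black box in the proof of Claim~1, so there is no internal proof to compare against. Your sketch is the standard Thom--Mather vector-field argument, and its skeleton is sound: the identity $\frac{d}{dt}\bigl(g_t\circ\beta_t\bigr)=(1-\rho\circ\beta_t)\,(g-f)\circ\beta_t$ does give the second bullet on any orbit confined to $\{\rho=1\}$, and the first and third bullets follow as you say (the third via uniqueness of ODE solutions through a zero of the field).

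There are, however, two places where the construction as designed falls short of the statement. First, a derivative is lost in the continuity claim: $Y_t(g)$ is obtained by applying the pointwise right inverse of $dg_t$ to $f-g$, so it depends on the $1$--jet of $g$; the map $g\mapsto \widetilde Y_t(g)$ is continuous from the $C^{s+1}$--topology to the $C^{s}$--topology on vector fields, not $C^s$ to $C^s$, and hence $\beta$ comes out continuous only as a map $(\mathcal U, C^{s+1})\to(\Diff(N),C^{s})$. For $r=\infty$ (the only case the paper uses) this is harmless, since continuity $C^{s+1}\to C^s$ for all $s$ implies continuity in $\tau W^\infty$; but for finite $r$ it does not deliver the lemma as stated, and for $r=1$ the field $\widetilde Y_t(g)$ is merely continuous, so the flow need not exist uniquely and the third bullet (which rests on uniqueness through the stationary point $x$) breaks. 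The proof in \cite{duPlessisWall} avoids this by working with the local product structure of the submersion $f$ (inverting $g$ fibrewise via the implicit function theorem and gluing), which involves $g$ itself rather than $dg$. Second, the confinement of orbits to $\{\rho=1\}$ and the completeness of the flow on the non-compact $N$ are the technical heart of the matter and are only asserted; to make them precise you must choose the $\varepsilon_\alpha$ in $N_1(f,K,\varphi,\psi,\varepsilon)$ in terms of local data (a lower bound for the conorm of $df$ on $K_\alpha\cap W$ and the distance from $K_\alpha\cap(N-U)$ to $\{\rho<1\}$), exactly in the spirit of the estimates the paper carries out in its Appendix. You acknowledge both points in your final paragraph, so I would call this a correct strategy with the continuity claim genuinely under-justified rather than a wrong approach.
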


\noindent
By applying this lemma to $f$ and the open set $U$, we can take an open neighborhood $\mathcal{U}_1\subset C^\infty(N)$ of $f$ and a mapping $\beta:\mathcal{U}_1\to \Diff(N)$ so that: 

\begin{itemize}

\item 
$\beta$ is continuous with respect to the $C^s$-topologies for any $s\in \N\cup \{\infty\}$,

\item 
$\beta(f)$ is equal to $\id_N$, and

\item 
$g\circ \beta(g)|_{N-U} = f|_{N-U}$ for any $g\in \mathcal{U}_1$. 

\end{itemize}

\noindent
We define a mapping $\theta_1:\mathcal{U}_1\to \mathcal{V}_1$ by $\theta_1(g)=g\circ \beta(g)$. 
Since any diffeomorphism is proper, $\theta_1$ is continuous (see \cite[\S.2, Proposition 1]{MatherII}). 
The mapping $\theta_1$ satisfies the desired conditions. 
\subsection*{Proof of Claim~2}

Let $\gamma_i = \nu_i^{1/\nu_i}$, where $\nu_i$ appears in the definition of $\Phi_i$. 
We can easily check that $\gamma_i$ is less than $\nu_i$. 
Using $\gamma_i$, we next define $e_\alpha>0$ as follows: 
\[
e_\alpha = \begin{cases}
\dfrac{\gamma_i}{4n} & (\alpha = i\in \mathcal{I})\\[5pt]
1 & (\alpha \not\in \mathcal{I}).
\end{cases}
\]
Let $e = \{e_\alpha\}_{\alpha\in A}$ and $\D \mathcal{U}_2 = \mathcal{V}_1 \cap N_2(f,L,\varphi,\id_\R,e)$, which is an open neighborhood of $f$ in $\mathcal{V}_1$. 
For the sake of simplicity, throughout the paper, we denote the norms $\norm{D^k(h\circ \varphi_\alpha^{-1})(\varphi_\alpha(x))}$ and $\norm{h\circ \varphi_\alpha^{-1}}_{s,\varphi_\alpha(L_\alpha)}$ by $\norm{D^kh(x)}$ and $\norm{h}_{s,L_\alpha}$, respectively, for $s\geq 0$, $x\in L_\alpha$ and $h\in C^\infty(N)$.  
Since $f\circ\varphi_{i}^{-1}(w_1,\ldots,w_n) = \pm w_1^2\pm \cdots \pm w_n^2 + y_i$, we can directly calculate $\norm{D^1f(x)}$ for $x\in L_i$ as follows: 
\[
\norm{D^1f(x)} = \sqrt{\sum_{m=1}^{n}\left(\frac{\Pa f\circ \varphi_{i}^{-1}}{\Pa w_m}(\varphi_{i}(x))\right)^2} = 2 \norm{\varphi_{i}(x)}. 
\]
Hence the norm $\norm{D^1g(x)}$ for $x\in L_{i}$ is estimated as follows: 
\[
\norm{D^1g(x)}\geq \norm{D^1f(x)}- \norm{g- f}_{2,L_i} > 2 \norm{\varphi_{i}(x)} - \dfrac{\gamma_i}{4}.
\]
We can deduce from this inequality that all the critical points of $g$ are contained in $\cup_{i\in \mathcal{I}} \varphi_i^{-1}(\Cl{B(\gamma_i/8)})$ (note that $g=f$ in $N\setminus U$, in particular $g$ is regular in $N\setminus U$). 
By the condition (U6), $\varphi_i^{-1}(\Cl{B(\nu_i/2)})$ is contained in $L_i$. 
Since $\norm{g-f}_{2,L_{i}}$ is less than $\gamma_i/4n < \nu_i/2n$, we can deduce from Lemma~\ref{T:crit pt unique} that there exists exactly one critical point of $g$ in $\varphi_i^{-1}(\Cl{B(\nu_i/2)})$ for each $i\in \mathcal{I}$. 
We denote this critical point of $g$ by $x_{i,g}$, and let $y_{i,g} = g(x_{i,g})$. 
Note that $x_{i,g}$ is in $\varphi_i^{-1}(\Cl{B(\gamma_i/8)})$. 
The norm $|y_i-y_{i,g}|$ can be estimated as follows: 
{\allowdisplaybreaks
\begin{align*}
|y_i-y_{i,g}| & = |f(x_i)-g(x_{i,g})|\\
& \leq |f(x_i)-f(x_{i,g})| + |f(x_{i,g})-g(x_{i,g})| \\
& < \left(\max_{x\in \varphi_i^{-1}(\Cl{B(\gamma_i/8))}}\norm{df_x} \right) \cdot |x_i-x_{i,g}| + \frac{\gamma_i}{4} < \frac{\gamma_i}{2}. 
\end{align*}
}

We first construct a diffeomorphism $\psi_g:\R\to \R$ such that $\Delta(\psi_g^{-1}\circ g)$ is equal to $\Delta$. 
Although the construction below is same as that for the mapping $h$ in \cite{Dimca}, we will briefly explain the construction for completeness of this manuscript. 
We take a smooth function $\rho:\R\to \R$ so that: 

\begin{itemize}

\item 
$\rho(t)= \rho(-t)$, 

\item $\rho|_{[0,\infty)}$ is monotone decreasing,

\item 
$\rho(y)=1$ if $|y|\leq 1$, and $\rho(y)=0$ if $|y|\geq 2$, and 

\item
$\lvert\rho'(y)\rvert<2$ for any $y\in \R$. 

\end{itemize}

\noindent
Using the function $\rho$, we define a function $h_i:\R \to \R$ as follows: 
\[
h_i(y) = \begin{cases}
y + (y_{i,g}-y_i)\rho\left(\dfrac{4(y- y_i)}{\nu_i}\right) & (y\in (y_i-\nu_i, y_i+\nu_i)) \\
y & (y\not\in (y_i-\nu_i, y_i+\nu_i)). 
\end{cases}
\]
Note that $h_i$ is a diffeomorphism as $h_i$ is surjective and 
\[
h_i'(y)> 1-4\gamma_i/\nu_i >1-4\nu_i^{1/\nu_i-1}>1-4\nu_i>0~(\because \nu_i < 1/4).
\] 
We further define a mapping $\psi_g:\R\to \R$ as follows: 
\[
\psi_g(y) = \begin{cases}
h_i(y) & (y\in (y_i-\nu_i,y_i+\nu_i)\mbox{ for some }i\in \mathcal{I}) \\
y & (y \not\in \bigcup_{i\in \mathcal{I}} (y_i-\nu_i,y_i+\nu_i)). 
\end{cases}
\]
One can prove that $\psi_g$ is a diffeomorphism in the same way as the proof that the mapping $h$ in \cite{Dimca} is a diffeomorphism. (Note that we did not merely put $\gamma_i = \nu_i/C$ (for $C\gg 1$) but $\gamma_i=\nu_i^{1/\nu_i}$ in order to guarantee that $\psi_g$ is a diffeomorphism.
See the proof in \cite{Dimca} for details.) 
One can also easily verify that $\Delta(\psi_g^{-1}\circ g)$ is equal to $\Delta$. 

For each $i\in \mathcal{I}$, we take a function $\eta_i:N\to[0,1]$ so that $\eta_i|_{K_{d(i)}} \equiv 0$ and $\eta_i|_{N\setminus K_{d(i)+1}} \equiv 1$. 
Using $\eta_i$ we define a mapping
\[
\widetilde{\eta}_{i,g}:(f^{-1}(y_i)\cap V_i)\times (y_i-2\nu_i,y_i+2\nu_i) \to (f^{-1}(y_i)\cap V_i)\times (y_i-2\nu_i,y_i+2\nu_i)
\]
as follows: 
\[
\widetilde{\eta}_{i,g}(x,y) = \left(x, y + (y_{i,g}-y_i)\rho\left(\frac{4(y-y_i)}{\nu_i}\right)\eta_i(\Phi_i(x,y))\right). 
\]
It is easy to see that $\widetilde{\eta}_{i,g}$ is a diffeomorphism and it is the identity mapping on the complement of $(f^{-1}(y_i)\cap V_i)\times (y_i-\nu_i/2,y_i+\nu_i/2)$. 
In particular we can define $\Psi_g^1:N\to N$ as follows (note that $(y_i-2\nu_i,y_i+2\nu_i)\cap (y_j-2\nu_j,y_j+2\nu_j)=\emptyset$ for $i\neq j$): 
\[
\Psi_g^1(x) = \begin{cases}
\Phi_i \circ \widetilde{\eta}_{i,g} \circ \Phi_i^{-1} (x) & \left(x\in f^{-1}(y_i-2\nu_i,y_i+2\nu_i)\cap V_i\mbox{ for some }i\in \mathcal{I}\right)\\
x & \left(x\not\in \bigcup_{i\in \mathcal{I}} (f^{-1}(y_i-\nu_i,y_i+\nu_i)\cap V_i)\right).
\end{cases}
\]
Note that if $f$ is quasi-proper, $f^{-1}(y_i-2\nu_i,y_i+2\nu_i)\cap V_i$ is empty for each $i\in \mathcal{I}$, and thus $\Psi_g^1$ is the identity mapping.  

\begin{lemma}

The mapping $\Psi_g^1$ is a diffeomorphism.

\end{lemma}

\begin{proof}
We can easily verify that $\Psi_g^1$ is a bijection. 
Let $Q\subset N$ be the boundary of $\cup_{i\in \mathcal{I}} (f^{-1}(y_i-\nu_i,y_i+\nu_i)\cap V_i)$ (not as a manifold, but in the sense of general topology).  
We can immediately deduce from the definition that $\Psi_g^1$ is locally diffeomorphic at any point in $N\setminus Q$.  
In what follows, we will show that for any $x\in Q$, there exists a neighborhood of $x$ on which $\Psi_g^1$ is the identity mapping (especially locally diffeomorphic). 

Let $\{x_m\}_{m\in \N}$ be a sequence of points in $\cup_{i\in \mathcal{I}} (f^{-1}(y_i-\nu_i,y_i+\nu_i)\cap V_i)$ converging to $x$. 
Since $\{x_m\}_{m\in \N}$ is a converging sequence, it is contained in $K_l$ for some $l\in \N$. 
As $K_i\cap V_i=\emptyset$ for any $i\in \mathcal{I}$, there exist only finitely many $i\in \mathcal{I}$ such that $f^{-1}(y_i-\nu_i,y_i+\nu_i)\cap V_i \cap K_l$ is not empty. 
In particular, by taking a subsequence of $\{x_m\}_{m\in \N}$ if necessary, we can assume that $\{x_m\}_{m\in \N}$ is contained in $f^{-1}(y_j-\nu_j,y_j+\nu_j)\cap V_j$, and thus $x\in f^{-1}([y_j-\nu_j,y_j+\nu_j])$ for some $j\in \mathcal{I}$.  
If $f(x)=y_j\pm \nu_j$, $x$ is contained in either of the preimages $f^{-1}(y_j+ \nu_j/2,y_j+ 2\nu_j)$ or $f^{-1}(y_j-2 \nu_j,y_j- \nu_j/2)$, and $\Psi_g^1$ is the identity mapping on it. 
Suppose that $x$ is in $f^{-1}(y_j-\nu_j,y_j+\nu_j)$. 
As $f^{-1}(y_j-\nu_j,y_j+\nu_j)\cap V_j$ is open and $x$ is a point in the boundary (especially the complement) of it, $x$ is contained in $N\setminus V_j \subset \Int(K_{d(j)})$.
Furthermore, the composition $\Phi_j\circ \widetilde{\eta}_{j,g}\circ \Phi_j^{-1}$ is the identity mapping on the intersection $f^{-1}(y_j-\nu_j,y_j+\nu_j)\cap V_j\cap K_{d(j)}$.
Thus, $\Psi_g^1$ is the identity mapping on $f^{-1}(y_j-\nu_j,y_j+\nu_j)\cap \Int(K_{d(j)})$, which is a neighborhood of $x$. 
\end{proof}

Since composing a diffeomorphism of the source does not affect the critical value set, $\psi_g^{-1}\circ g \circ \Psi_g^1$ is contained in $\mathcal{V}_2$ for $g\in \mathcal{U}_2$.

\begin{lemma}\label{T:continuity theta2}
The mapping $\theta_2:\mathcal{U}_2 \to \mathcal{V}_2$ defined as $\theta_2(g) = \psi_g^{-1}\circ g \circ \Psi_g^1$ is continuous with respect to the $C^s$-topologies for any $s\geq 2$, and thus with respect to the $C^\infty$-topologies. 

\end{lemma}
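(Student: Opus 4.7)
The plan is to decompose $\theta_2(g)=\psi_g^{-1}\circ g\circ\Psi_g^1$ and show that each factor depends continuously on $g\in\mathcal{U}_2$ in the Whitney $C^\infty$–topology, then invoke continuity of composition. The crucial ingredient underlying everything is the continuity of $g\mapsto y_{i,g}$, and the other continuity statements reduce to it by local-finiteness arguments and Mather's basis (Theorem~\ref{T:basis Whitney topology}).

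For the critical points, I would apply the parametric implicit function theorem to the equation $D^1(g\circ\varphi_i^{-1})(w)=0$ on the coordinate chart $\varphi_i(Q_i(\nu_i))\subset\R^n$. Since $f$ is Morse at $x_i$, the Hessian of $f\circ\varphi_i^{-1}$ at $\varphi_i(x_i)$ is invertible; for $g$ in a $C^2$–neighborhood of $f$ on $\overline{Q_i(\nu_i)}$ (which already contains $\mathcal{U}_2$), the unique zero $\varphi_i(x_{i,g})$ in $B(\nu_i)$ is a smooth function of the restriction $g|_{Q_i(\nu_i)}$. Hence $g\mapsto x_{i,g}$, and therefore $g\mapsto y_{i,g}=g(x_{i,g})$, is continuous with respect to the $C^2$–Whitney topology and a fortiori the $C^\infty$–one.

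Continuity of $g\mapsto\psi_g$ and $g\mapsto\Psi_g^1$ now follows from their explicit formulas, because both depend smoothly on the scalar parameters $\{y_{i,g}-y_i\}_{i\in I}$ and are trivial outside the locally finite collection of regions $(y_i-\nu_i,y_i+\nu_i)$ (in the target) and $f^{-1}((y_i-\nu_i,y_i+\nu_i))\cap V_i$ (in the source). Concretely, any compact $K\subset N$ is contained in some $K_\ell$, and the condition $N\setminus V_i\supset K_{d(i)-1}$ forces $K\cap V_i=\emptyset$ as soon as $d(i)-1\geq\ell$, so only finitely many indices $i\in I$ affect $\Psi_g^1|_K$; similarly only finitely many intervals $(y_i-\nu_i,y_i+\nu_i)$ meet any compact subset of $\R$. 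Thus for any basic neighborhood $N_s(\theta_2(g_0),L,\varphi,\id_\R,\varepsilon)$ of $\theta_2(g_0)$, only finitely many smooth operations on each $L_\alpha$ are involved, and we can shrink $\varepsilon$ coordinate-wise to absorb them.

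Finally, inversion is continuous on $\Diff(\R)$ (for the Whitney $C^\infty$–topology) and composition is continuous when the left factor is proper, by \cite[\S.2, Proposition 1]{MatherII}; applying this twice to the proper mappings $g\circ\Psi_g^1$ and $\psi_g^{-1}$ gives continuity of $\theta_2$. The main obstacle is the bookkeeping in the previous paragraph: one must verify carefully that the local-finiteness built into the covering $\{U_\alpha\}$ and into the sequence $\{K_n\}$ really does make $\Psi_g^1$, defined by piecewise formulas over an infinite index set $I$, depend continuously on $g$ in the Whitney $C^\infty$–topology rather than only in the compact-open topology.
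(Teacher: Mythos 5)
There is a genuine gap, and it sits exactly at the point you flag as ``the main obstacle'' in your last sentence: the map $g\mapsto\Psi_g^1$ is \emph{not} continuous in the Whitney $C^\infty$--topology (the paper records this explicitly in Remark~\ref{Re:case quasi-proper}), so a factor-by-factor argument cannot work. The modification $\Psi_g^1\neq\id$ is supported on $f^{-1}((y_i-\nu_i,y_i+\nu_i))\cap V_i$, which for $y_i\in Z(f)$ is a \emph{non-compact} set running out to the end of $N$; the size of $\Psi_g^1-\Psi_h^1$ there is comparable to $|y_{i,g}-y_{i,h}|$ on every chart meeting that set. A Whitney basic neighborhood of $\Psi_{g_0}^1$ imposes bounds $\varepsilon_\alpha$ on the infinitely many $L_\alpha$ meeting this set, and these $\varepsilon_\alpha$ may tend to $0$; no constraint on $g$ of the form $|y_{i,g}-y_{i,g_0}|<\delta$ can satisfy them all. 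Your local-finiteness argument (each compact $K$ meets $V_i$ for only finitely many $i$) only yields continuity in the compact-open topology. The same problem invalidates your final step: Mather's continuity of composition requires the \emph{inner} (first-applied) map to be proper, and $g\circ\Psi_g^1:N\to\R$ is a non-proper function whenever $f$ is (this is precisely why $g\mapsto\psi_g^{-1}\circ g$ fails to be continuous, again per Remark~\ref{Re:case quasi-proper}).

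The idea you are missing is a cancellation that is visible only in the full composition: on $f^{-1}((y_i-\nu_i,y_i+\nu_i))\cap V_i\cap K_{d(i)+1}^c$ the map $\Psi_g^1$ equals $\Phi_i\circ(\id\times\psi_g)\circ\Phi_i^{-1}$, so $\psi_g^{-1}\circ g\circ\Psi_g^1=\psi_g^{-1}\circ\psi_g\circ f=f$ there, independently of $g$. Hence $\theta_2(g)-\theta_2(h)$ is supported in $\bigcup_i N_i$ with $N_i=f^{-1}([y_i-\nu_i/2,y_i+\nu_i/2])\cap K_{d(i)+1}$ \emph{compact}, each meeting only finitely many $L_\alpha$ by condition (U5). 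The paper therefore proves continuity of $g\mapsto\Psi_g^1|_{N_i}$ and of $g\mapsto\psi_g$ (these parts of your outline are essentially right, and your IFT derivation of the continuity of $g\mapsto y_{i,g}$ is a workable substitute for the paper's quantitative Lemma~\ref{T:evaluation critical point/value}), assembles them into continuity of each $\tau_i(g)=\psi_g^{-1}\circ g\circ\Psi_g^1|_{N_i}$, and then combines the countably many conditions into a single Whitney neighborhood using the fact that $\tau_i(h)$ depends only on $y_{i,h}$, i.e.\ only on $h|_{L_{\alpha(i)}}$. You would need to restructure your proof along these lines rather than composing globally defined factors.
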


\begin{remark}\label{Re:case quasi-proper}
Since $f$ is not necessarily proper, the mapping $g\mapsto \psi_g^{-1}\circ g$ is not continuous in general. 
We can also check that the mapping $g\mapsto\Psi_g^1$ is not continuous in general, either. 
However, as we will show, the mapping (denoted by $\theta_2''$ in the proof below) $g\mapsto \psi_g$ is continuous. 
Furthermore, if $f$ is quasi-proper, the mapping $g\mapsto \psi_g^{-1}\circ g$ is continuous since $\Psi_g^1$ is the identity mapping. 

\end{remark}

\begin{proof}[Proof of Lemma~\ref{T:continuity theta2}]
Throughout the proof, we will assume that any mapping space is endowed with the $C^s$-topology for $s\geq 2$. 
Let $g\in \mathcal{U}_2\subset \mathcal{V}_1$. 
By the definition of $\mathcal{V}_1$, $g$ is equal to $f$ on $N\setminus U$ (where $U=\cup_{i\in \mathcal{I}}U_i$, as defined above). 
Since $U_i$ is contained in $f^{-1}(y_i-\nu_i/2,y_i+\nu_i/2)$ and $\Psi_g^1$ (resp.~$\psi_g$) is the identity mapping on the complement of $\cup_{i\in \mathcal{I}} f^{-1}(y_i-\nu_i/2,y_i+\nu_i/2)$ (resp.~$\cup_{i\in \mathcal{I}} (y_i-\nu_i/2,y_i+\nu_i/2)$), the composition $\psi_g^{-1}\circ g \circ \Psi_g^1$ is equal to $g$($=f$) on $N\setminus \cup_{i\in \mathcal{I}} f^{-1}(y_i-\nu_i/2,y_i+\nu_i/2)$. 
For each $i\in \mathcal{I}$, the diffeomorphism $\widetilde{\eta}_{i,g}$ (appearing in the definition of $\Psi_g^1$) is equal to $p_1\times (\psi_g\circ p_2)$ on $f^{-1}(y_i-\nu_i/2,y_i+\nu_i/2)\setminus K_{d(i)+1}$, where $p_m$ is the projection to the $m$-th component.
As $U_i\subset K_{d(i)+1}$ for each $i\in \mathcal{I}$, we can conclude that the composition $\psi_g^{-1}\circ g \circ \Psi_g^1$ is equal to $g$($=f$) on the complement $N\setminus \cup_{i\in \mathcal{I}} \tilde{N}_i$, where $\tilde{N}_i$ is defined as follows: 
\[
\tilde{N}_i = f^{-1}([y_i-\nu_i/2,y_i+\nu_i/2])\cap K_{d(i)+1}.
\]
Note that $\tilde{N}_i$ is equal to $N_i \cup (K_{d(i)}\cap f^{-1}([y_i-\nu_i/2,y_i+\nu_i/2]))$. 
In what follows, we will prove the following two claims: 

\begin{enumerate}

\item 
The mapping $\theta_2':\mathcal{U}_2\to C^\infty(\tilde{N}_i,N)$ defined as $\theta_2'(g)=\Psi_g^1|_{\tilde{N}_i}$ is continuous. 

\item 
The mapping $\theta_2'':\mathcal{U}_2 \to C^\infty(\R)$ defined as $\theta_2''(g)=\psi_g$ is continuous.

\end{enumerate}

We first prove continuity of $\theta_2'$.
By the definition, $\Psi_g^1$ is the identity mapping on $K_{d(i)}\cap f^{-1}([y_i-\nu_i/2,y_i+\nu_i/2])$ and on the complement of $\cup_{i\in \mathcal{I}}f^{-1}(y_i-\nu_i/2,y_i+\nu_i/2)$. 
Moreover, one can easily verify that $\Psi_g^1$ preserves the image $\Phi_i(W\times (y_i-\nu_i,y_i+\nu_i))$ for any $i\in \mathcal{I}$ and $W\subset f^{-1}(y_i)\cap V_i$. 
We can thus deduce from the conditions (U3) and (U4) that $\Psi_g^1(U_\alpha)=U_\alpha$ for any $\alpha \in A$. 
By Theorem~\ref{T:basis Whitney topology}, it is enough to show that for any $g\in \mathcal{U}_2$ and system $\varepsilon =\{\varepsilon_\alpha\}_{\alpha \in A}$ of positive numbers, there exists a system $\delta =\{\delta_\alpha\}_{\alpha\in A}$ satisfying the following:
\[
\theta_2'(N_s(g,L,\varphi,\id,\delta)\cap \mathcal{U}_2)\subset N_s(\Psi_g^1,L\cap\tilde{N}_i,\varphi,\varphi,\varepsilon). 
\]

By the condition (U5), there exist only finitely many $\alpha\in A$ satisfying the condition $L_\alpha \cap N_i\neq\emptyset$, in particular we can take $\kappa_i = \min \left\{\varepsilon_\alpha ~|~  L_\alpha \cap N_i\neq\emptyset\right\}$. 
Suppose that $\alpha$ satisfies (U3-2), that is, $U_\alpha = \Phi_i(W_\alpha \times (y_i-\nu_i,y_i+\nu_i))$ and $\varphi_\alpha = (\phi_\alpha\times \id)\circ \Phi_i^{-1}$. 
The following equality holds for $h\in \mathcal{U}_2$ and $(x,y)\in \varphi_\alpha(L_\alpha) \subset \phi_\alpha(W_\alpha) \times (y_i-\nu_i,y_i+\nu_i)$: 
{\allowdisplaybreaks
\begin{align*}
& \varphi_\alpha\circ \Psi_h^1|_{\tilde{N}_i}\circ \varphi_\alpha^{-1}(x,y)-\varphi_\alpha\circ \Psi_g^1|_{\tilde{N}_i}\circ \varphi_\alpha^{-1}(x,y)\\
& (\phi_\alpha \times \id)\circ \widetilde{\eta}_{i,g}\circ (\phi_\alpha^{-1} \times \id)(x,y)-(\phi_\alpha \times \id)\circ \widetilde{\eta}_{i,h}\circ (\phi_\alpha^{-1} \times \id)(x,y)\\
= &\left(0, (y_{i,g}-y_{i,h})\rho\left(\frac{4(y-y_i)}{\nu_i}\right)\eta_i(\Phi_i(\phi_\alpha^{-1}(x),y))\right)\\
= &(y_{i,g}-y_{i,h})\left(0, \rho\left(\frac{4(y-y_i)}{\nu_i}\right)\eta_i(\Phi_i(\phi_\alpha^{-1}(x),y))\right).
\end{align*}
}%
We define $m_{i,s}\in \R$ as follows: 
\[
m_{i,s} = \max \left\{\left.\norm{\left(0, \rho\left(\frac{4(y-y_i)}{\nu_i}\right)\eta_i(\Phi_i(\phi_\alpha^{-1}(x),y))\right)}_{s,\varphi_\alpha(L_\alpha)}\right|\begin{minipage}[c]{26mm}
$\alpha$ satisfies (U3-1)

and $L_\alpha \cap N_i \neq \emptyset$
\end{minipage}\right\}.
\]
From the calculation above, we obtain the following estimate for any $h\in \mathcal{U}_2$ and $\alpha \in A$ with $L_\alpha \cap N_i\neq \emptyset$: 
\[
\norm{\varphi_\alpha\circ \Psi_h^1|_{\tilde{N}_i}\circ \varphi_\alpha^{-1}-\varphi_\alpha\circ \Psi_g^1|_{\tilde{N}_i}\circ \varphi_\alpha^{-1}}_{s,\varphi_\alpha(L_\alpha)}\leq \lvert y_{i,g}-y_{i,h}\rvert \cdot m_{i,s}
\]
We take a system $\{\delta_\alpha\}_{\alpha \in A}$ of positive numbers as follows: 
\[
\delta_\alpha = 
\begin{cases}
\dfrac{\kappa_i}{m_{i,s} \left(\sqrt{n}\norm{g}_{1,L_i} + 1\right)} & (\alpha = i\in \mathcal{I})\\
1 & (\alpha\not\in \mathcal{I}). 
\end{cases}
\]
Let $h \in N_s(g,L,\varphi,\id,\delta)\cap \mathcal{U}_2$. 
Since $g$ and $h$ are functions in $\mathcal{U}_2$, the norms $\norm{g-f}_{2,L_i}$ and $\norm{h-f}_{2,L_i}$ are less than $e_i < \nu_i/n$. 
As the critical points $x_{i,g}$ and $x_{i,h}$ are contained in $\varphi_i^{-1}(\Cl{B(\nu_i/2)})$, and this set is a subset of $L_i$, we can obtain the following estimate from Lemma~\ref{T:evaluation critical point/value}:
{\allowdisplaybreaks
\begin{align*}
\left|y_{i,h}-y_{i,g}\right|<& \left(\sqrt{n}\norm{g}_{1,\varphi_i^{-1}(\Cl{B(\nu_i/2)})}+1\right)\norm{g-h}_{1,\varphi_i^{-1}(\Cl{B(\nu_i/2)})}\\
\leq &\left(\sqrt{n}\norm{g}_{1,L_i}+1\right)\norm{g-h}_{1,L_i} \\
<& \left(\sqrt{n}\norm{g}_{1,L_i}+1\right)\delta_i < \frac{\kappa_i}{m_{i,s}}. 
\end{align*}
}%
We thus obtain the following estimate for $\alpha \in A$ with $L_\alpha \cap N_i\neq \emptyset$: 
\[
\norm{\varphi_\alpha\circ \Psi_h^1|_{\tilde{N}_i}\circ \varphi_\alpha^{-1}-\varphi_\alpha\circ \Psi_g^1|_{\tilde{N}_i}\circ \varphi_\alpha^{-1}}_{s,\varphi_\alpha(L_\alpha)} <  \kappa_i< \varepsilon_\alpha. 
\]
On the other hand, $\Psi_h^1$ is the identity mapping on $\tilde{N}_i\cap K_{d(i)}$ for any $h\in \mathcal{U}_2$. 
Hence, $\Psi_h^1 = \Psi_g^1$ on $\tilde{N}_i\setminus N_i$, in particular $\norm{\varphi_\alpha\circ \Psi_h^1|_{\tilde{N}_i}\circ \varphi_\alpha^{-1}-\varphi_\alpha\circ \Psi_g^1|_{\tilde{N}_i}\circ \varphi_\alpha^{-1}}_{s,\varphi_\alpha(L_\alpha)}$ is equal to $0$, especially less than $\varepsilon_\alpha$ for $\alpha \in A$ with $L_\alpha \cap N_i=\emptyset$. 
Therefore, the diffeomorphism $\Psi_h^1|_{\tilde{N}_i}$ is contained in $N_s(\Psi_g^1,L\cap \tilde{N}_i,\varphi,\varphi,\varepsilon)$. 

We next prove continuity of $\theta_2''$. 
Let $L'=\{[r_\beta,s_\beta]\}_{\beta\in B}$ be a system of closed intervals such that for each $i\in \mathcal{I}$, there exist only finitely many $\beta\in B$ with $(y_i-\nu_i,y_i+\nu_i)\cap [r_\beta,s_\beta] \neq \emptyset$. 
Again, by Theorem~\ref{T:basis Whitney topology}, it is enough to show that for any $g\in \mathcal{U}_2$ and system $\varepsilon = \{\varepsilon_\beta\}_{\beta \in B}$ of positive numbers, there exists a system $\delta = \{\delta_\alpha\}_{\alpha\in A}$ such that:
\[
\theta_2''(N_s(g,L,\varphi,\id,\delta)\cap \mathcal{U}_2) \subset N_s(\psi_g,L',\id,\id,\varepsilon). 
\]
We take a positive number $\kappa_i'$ as follows: 
\[
\kappa_i' = \min\left\{\varepsilon_\beta>0 ~\left|\right. (y_i-\nu_i,y_i+\nu_i)\cap [r_\beta,s_\beta] \neq \emptyset\right\}. 
\]
For $h\in \mathcal{U}_2$, the norm $\norm{\psi_h-\psi_g}_{s,[r_\beta,s_\beta]}$ is equal to $0$ if $[r_\beta,s_\beta]$ is away from the union $\cup_{i\in \mathcal{I}} (y_i-\nu_i,y_i+\nu_i)$ since $\psi_h$ is the identity mapping outside the union.
On the other hand, the following holds for $y\in (y_i-\nu_i,y_i+\nu_i)$: 
\[
\psi_g(y)-\psi_h(y) = (y_{i,g}-y_{i,h}) \rho\left(\frac{4(y-y_i)}{\nu_i}\right). 
\]
Let $m_{i,s}' = \norm{\rho\left(\frac{4(y-y_i)}{\nu_i}\right)}_{s,(y_i-\nu_i,y_i+\nu_i)}$ and we take a system $\{\delta_\alpha\}_{\alpha\in A}$ of positive numbers as follows: 
\[
\delta_\alpha = 
\begin{cases}
\dfrac{\kappa_i'}{m_{i,s}'\left(\sqrt{n}\norm{g}_{1,L_i} + 1\right)} & (\alpha = i\in \mathcal{I}) \\
1 & (\alpha\not\in \mathcal{I}). 
\end{cases}
\]
Let $h\in N_s(g,L,\varphi,\id,\delta)\cap \mathcal{U}_2$. 
As in the previous paragraph, one can deduce from Lemma~\ref{T:evaluation critical point/value} that $\lvert y_{i,g}-y_{i,h}\rvert$ is less than $\kappa_i/m_{i,s}'$.
Thus, the norm $\norm{\psi_h-\psi_g}_{s,[r_\beta,s_\beta]}$ can be estimated as follows: 
{\allowdisplaybreaks
\begin{align*}
\norm{\psi_h-\psi_g}_{s,[r_\beta,s_\beta]} \leq &\sup_{\substack{i\in \mathcal{I} \\ (y_i-\nu_i,y_i+\nu_i)\cap [r_\beta,s_\beta] \neq \emptyset}}\left|y_{i,h} - y_{i,g}\right| m_{i,s}' \\
< &\sup_{\substack{i\in \mathcal{I} \\ (y_i-\nu_i,y_i+\nu_i)\cap [r_\beta,s_\beta] \neq \emptyset}} \kappa_i'.
\end{align*}
}%
Since $\kappa_i'\leq \varepsilon_\beta$ for any $i\in \mathcal{I}$ with $(y_i-\nu_i,y_i+\nu_i)\cap [r_\beta,s_\beta] \neq \emptyset$, the norm $\norm{\psi_h-\psi_g}_{s,[r_\beta,s_\beta]}$ is less than $\varepsilon_\beta$. 
Therefore, the image $\theta_2''(N_s(g,L,\varphi,\id,\delta)\cap \mathcal{U}_2)$ is contained in $N_s(\psi_g,L',\id,\id,\varepsilon)$. 

Since the mappings $\theta_2'$ and $\theta_2''$ are continuous and $\tilde{N}_i$ is compact, the following mapping is also continuous (see \cite[\S.2, Propositions 1, 2 and 5]{MatherII}): 
\[
\tau_i:\mathcal{U}_2\to C^\infty(\tilde{N}_i),\hspace{.5em} g \mapsto \psi_g^{-1}\circ g\circ \Psi_g^1|_{\tilde{N}_i}. 
\]
Thus, for any $g\in \mathcal{U}_2$ and any system $\{\varepsilon_\alpha\}_{\alpha\in A}$ of positive numbers, we can take a system $\delta^i=\{\delta_\alpha^i\}_{\alpha\in A}$ so that $\tau_i(N_s(g,L,\varphi,\id,\delta^i)\cap \mathcal{U}_2)$ is contained in $N_s(\tau_i(g),L\cap \tilde{N}_i,\varphi,\id,\varepsilon/2)$. 
By the definitions of $\psi_h$ and $\Psi_h^1$, the function $\tau_i(h)$ depends only on $y_{i,h}$. 
In particular, $\tau_i(N_s(g,L,\varphi,\id,\gamma)\cap \mathcal{U}_2) $ is in $ N_s(\tau_i(g),L\cap \tilde{N}_i,\varphi,\id,\varepsilon)$ for any system $\{\gamma_\alpha\}_{\alpha\in A}$ with $\gamma_{i}=\delta^i_{i}$ ($i\in \mathcal{I}$). 
We take a system $\{\tilde{\delta}_\alpha\}_\alpha$ as follows: 
\[
\tilde{\delta}_\alpha = 
\begin{cases}
\delta^i_{i} & (\alpha = i\in \mathcal{I}) \\
1 & (\alpha\not\in \mathcal{I}). 
\end{cases}
\]
Let $h \in N_s(g,L,\varphi,\id,\tilde{\delta})\cap \mathcal{U}_2$.
The intersection $L_\alpha \cap \supp(\theta_2(g)-\theta_2(h))$ is contained in $\cup_{i\in \mathcal{I}}(\tilde{N}_i \cap L_\alpha)$, in particular the norm $\norm{\theta_2(h)- \theta_2(g)}_{s,L_\alpha}$ can be estimated as follows:
{\allowdisplaybreaks
\begin{align*}
\norm{\theta_2(h)- \theta_2(g)}_{s,L_\alpha} 
=& \norm{\theta_2(h)- \theta_2(g)}_{s,\cup_{i\in \mathcal{I}} (\tilde{N}_i\cap L_\alpha)}\\
\leq & \sup_{i\in \mathcal{I}}\norm{\theta_2(h)- \theta_2(g)}_{s,\tilde{N}_i\cap L_\alpha}\\
=& \sup_{i\in \mathcal{I}}\norm{\tau_i(h)- \tau_i(g)}_{s,\tilde{N}_i\cap L_\alpha} \leq \varepsilon_\alpha/2 <\varepsilon_\alpha. 
\end{align*}
}%
Thus, $\theta_2(N_s(g,L,\varphi,\id,\tilde{\delta}))$ is contained in $N_s(\theta_2(g),L,\varphi,\id,\varepsilon)$ and by Theorem~\ref{T:basis Whitney topology}, $\theta_2$ is continuous. 
\end{proof}

The mapping $\theta_2:\mathcal{U}_2\to \mathcal{V}_2$ satisfies the desired conditions. 
Indeed, it is obvious by the definition that $\theta_2(g)$ is $\mathcal{A}$-equivalent to $g$ for any $g\in \mathcal{U}_2$. 
As $y_{i,f} = y_i$ for each $i\in \mathcal{I}$, $\Psi_f^1$ and $\psi_f$ are both the identity mappings, and thus $\theta_2(f) = f$.  

\subsection*{Proof of Claim~3}

We will use the following lemma in the proof:

\begin{lemma}[{\cite[\S.7, Lemma 2]{MatherII}}]\label{T:continuity integral vectfield}
Let $U$ be a manifold without boundaries and $\pi_U:U\times I\to U$ be the projection, where $I=[0,1]\subset \R$ is the unit interval. 
There exists an open neighborhood $\mathcal{O}_U \subset \Gamma(\pi_U^\ast TU)$ of the zero-section (with respect to the topology $\tau W^0$) such that we can take a mapping 
\[
\theta:\mathcal{O}_U\to C^\infty(U\times I ,U)
\]
by taking a flow of a vector field in $\mathcal{O}_U$ and $\theta$ is continuous with respect to the topologies $\tau W^s$ (for $0\leq s \leq \infty$). 

\end{lemma}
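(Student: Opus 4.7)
The idea is to integrate the time-dependent vector field $\xi\in\Gamma(\pi_U^\ast TU)$ chart-by-chart, using the size constraint defining $\mathcal{O}_U$ to confine every integral curve to a fixed compact subset of its chart throughout $t\in I$, and then to deduce continuity of $\theta$ from the classical $C^s$--dependence of ODE solutions on the right-hand side.

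First I would fix a locally finite system of coordinate neighborhoods $\{(W_\alpha,\varphi_\alpha)\}_{\alpha\in A}$ of $U$ together with compact subsets $K_\alpha\subset\Int(K_\alpha')\subset K_\alpha'\subset W_\alpha$ satisfying $\bigcup_\alpha K_\alpha=U$. Since $\varphi_\alpha(K_\alpha)$ has positive distance from $\R^n\setminus\varphi_\alpha(\Int(K_\alpha'))$, I can pick $c_\alpha>0$ small enough that any curve in $\varphi_\alpha(W_\alpha)$ that starts in $\varphi_\alpha(K_\alpha)$ and moves with speed at most $c_\alpha$ cannot escape $\varphi_\alpha(K_\alpha')$ in unit time. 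I then set
\[
\mathcal{O}_U = \left\{\xi\in\Gamma(\pi_U^\ast TU)~|~\sup_{(x,t)\in K_\alpha\times I}\norm{(d\varphi_\alpha)_x\xi(x,t)}<c_\alpha\mbox{ for every }\alpha\in A\right\},
\]
which is a $\tau W^0$--neighborhood of the zero-section. For $\xi\in\mathcal{O}_U$ and $x\in K_\alpha$, the initial-value problem $\dot{\gamma}(t)=\xi(\gamma(t),t)$ with $\gamma(0)=x$, read in the chart $(W_\alpha,\varphi_\alpha)$, has its solution trapped in $K_\alpha'$ by the speed bound, so it exists and is smooth for all $t\in I$; setting $\theta(\xi)(x,t)=\gamma(t)$ yields a smooth map $\theta(\xi):U\times I\to U$.

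For $\tau W^s$--continuity at $\xi_0\in\mathcal{O}_U$, I would fix a basic open neighborhood $N_s(\theta(\xi_0),K,\varphi,\varphi,\varepsilon)$ as in Theorem~\ref{T:basis Whitney topology}. Applied in the chart $(W_\alpha,\varphi_\alpha)$---in which $\theta(\xi)(K_\alpha\times I)$ and $\theta(\xi_0)(K_\alpha\times I)$ are both contained in $K_\alpha'$---the classical theorem on $C^s$--dependence of ODE solutions on the right-hand side furnishes $\delta_\alpha>0$ such that any $\xi\in\mathcal{O}_U$ that is $\delta_\alpha$--close to $\xi_0$ in $C^s$ on $K_\alpha'\times I$ (in local coordinates) satisfies the required $\varepsilon_\alpha$--estimate comparing $\theta(\xi)$ and $\theta(\xi_0)$ on $K_\alpha\times I$. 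Local finiteness of $\{K_\alpha'\}$ then assembles $\{\delta_\alpha\}$ into a single $\tau W^s$--neighborhood of $\xi_0$ that $\theta$ carries into $N_s(\theta(\xi_0),K,\varphi,\varphi,\varepsilon)$.

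The chief technical point is the $C^s$--dependence of ODE flows on the vector field, uniform over a compact source set. For $s=0$ this follows from a Gr\"onwall estimate using the (finite) Lipschitz constant of $\xi_0$ on $K_\alpha'\times I$; higher $s$ is obtained by differentiating the flow equation in both the space and parameter variables and bootstrapping on the lower-order estimates. Once this standard ODE input is granted, the only remaining issue is coordinating the chart-by-chart tolerances, which is handled by the locally finite atlas.
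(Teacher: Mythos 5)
The paper does not prove this lemma at all --- it is quoted verbatim from Mather \cite[\S.7, Lemma 2]{MatherII} and used as a black box --- so there is no in-paper argument to compare against. Your reconstruction follows the standard route (which is also Mather's): confine each integral curve to a fixed compact of a chart by a smallness condition on the field, then invoke $C^s$--dependence of ODE solutions on the right-hand side, and patch via a locally finite atlas. That is the right strategy, and the continuity argument at the end is assembled correctly.

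There is one genuine slip in the confinement step. You define $\mathcal{O}_U$ by the condition $\sup_{(x,t)\in K_\alpha\times I}\norm{(d\varphi_\alpha)_x\xi(x,t)}<c_\alpha$, i.e.\ the speed bound is imposed only over the \emph{starting} compact $K_\alpha$. But the trapping argument needs the bound to hold along the entire trajectory: once $\gamma(t)$ leaves $K_\alpha$ and enters $K_{\alpha}'\setminus K_\alpha$, the $\alpha$--condition no longer controls $\norm{(d\varphi_\alpha)_{\gamma(t)}\xi(\gamma(t),t)}$; the curve is then governed by the bounds $c_\beta$ in \emph{other} charts, and converting those back to $\varphi_\alpha$--coordinates involves the transition Jacobians, so the curve could in principle exit $K_\alpha'$ before time $1$. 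The fix is immediate: take the supremum over $K_\alpha'\times I$ (or over $\overline{W_\alpha''}\times I$ for a slightly shrunk chart domain containing $K_\alpha'$), and choose $c_\alpha$ smaller than the Euclidean distance from $\varphi_\alpha(K_\alpha)$ to $\R^n\setminus\varphi_\alpha(\Int K_\alpha')$; then as long as the curve stays in $K_\alpha'$ its $\varphi_\alpha$--speed is below $c_\alpha$, and a standard first-exit-time argument shows it never reaches $\partial K_\alpha'$. Since $\{K_\alpha'\}$ is still locally finite, the resulting $\mathcal{O}_U$ remains a basic $\tau W^0$--neighborhood of the zero-section by Theorem~\ref{T:basis Whitney topology}, and the rest of your argument goes through unchanged.
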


Let $e = \{e_\alpha\}_{\alpha\in A}$ be the system we took in the proof of Claim 2 and $\D \mathcal{U}_3' = \mathcal{V}_2 \cap N_2(f,L,\varphi,\id,e)$, which is an open neighborhood of $f$ in $\mathcal{V}_2$. 
As we observed, for any $g\in \mathcal{U}_3'$ and $i\in \mathcal{I}$ there exists exactly one critical point $x_{i,g}$ of $g$ in $\Cl{\varphi_i^{-1}(B(\gamma_i/8))}$. 
We define a vector field $X_g$ on $N$ as follows: 
\[
X_g(w) = \begin{cases}
\lambda(w)(\varphi_i(x_{i,g}) - \varphi_i(x_i)) & (w\in \Cl{\varphi_i^{-1}(B(\gamma_i/4))}\mbox{ for some }i\in \mathcal{I})\\
0 & (\mbox{otherwise}),  
\end{cases}
\]
where $\lambda:N\to [0,1]$ is a non-negative valued function satisfying the conditions $\lambda|_{\cup_{i\in\mathcal{I}}\Cl{\varphi_i^{-1}(B(\gamma_i/8))}}\equiv 1$ and $\lambda|_{N\setminus\cup_{i\in\mathcal{I}}\Cl{\varphi_i^{-1}(B(\gamma_i/4))}}\equiv 0$. (Note that we identify a vector field on  $U_{i}$($\supset\Cl{\varphi_i^{-1}(B(\gamma_i/4))}$) with an element in $C^\infty(U_{i})^n$ via the chart $\varphi_i$.)
We define a mapping $\chi:\mathcal{U}_3'\to \Gamma(\pi_N^\ast TN)$ by $\chi(g) =X_g\circ \pi_N$. 
By Lemma~\ref{T:continuity integral vectfield} we can take an open neighborhood  $\mathcal{O}_N\subset \Gamma(\pi_N^\ast TN)$ of the zero-section so that the mapping $\theta: \mathcal{O}_N\to C^\infty(N\times I ,N)$ defined by taking a flow is continuous. 
Let $\mathcal{U}_3 = \chi^{-1}(\mathcal{O}_N)$ and we define a diffeomorphism $\Psi_g^2:N\to N$ for $g\in \mathcal{U}_3$ as follows:
\[
\Psi_g^2(x) = \theta(\chi(g))(x,1). 
\] 
Since $\Psi_g^2$ sends $x_{i}$ to $x_{i,g}$, $\Sigma(g\circ \Psi_g^2)$ is equal to $\Sigma$. 
We define a mapping $\theta_3:\mathcal{U}_3 \to \mathcal{V}_3$ as $\theta_3(g) = g\circ \Psi_g^2$. 

\begin{lemma}\label{T:continuity theta3}
The set $\mathcal{U}_3$ is an open neighborhood of $f$ in $\mathcal{V}_2$ and $\theta_3$ is continuous (with respect to the topologies $\tau W^s$ for $2\leq s \leq \infty$). 

\end{lemma}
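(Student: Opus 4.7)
The plan is to check continuity of the vector-field assignment $\chi$, deduce that $\mathcal{U}_3=\chi^{-1}(\mathcal{O}_N)$ is an open neighborhood of $f$, and then obtain continuity of $\theta_3$ by composing continuous operations.

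\textbf{Step 1: $f\in\mathcal{U}_3$.} When $g=f$, the unique critical point of $f$ in $\overline{Q_i(\gamma_i/8)}$ is $x_i$ itself, so $\varphi_i(x_{i,f})-\varphi_i(x_i)=0$ and consequently $X_f\equiv 0$ on every $\overline{Q_i(\gamma_i/4)}$. Hence $\chi(f)$ is the zero section, which lies in $\mathcal{O}_N$.

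\textbf{Step 2: continuity of $\chi:\mathcal{U}_3'\to\Gamma(\pi_N^\ast TN)$.} Let $g\in\mathcal{U}_3'$ and let $\{\varepsilon_\alpha\}_{\alpha\in A}$ be a system of positive numbers giving a basic open neighborhood of $\chi(g)$. Because $\gamma_i/4<\nu_i/2$, by condition (L4) the support of $X_h$ meets $L_\alpha$ only if $\alpha=\alpha(i)$ for some $i\in I$, and there $X_h|_{L_{\alpha(i)}}$ equals $\lambda(w)(\varphi_i(x_{i,h})-\varphi_i(x_i))$, a product of the fixed cut-off $\lambda$ with the $h$-dependent constant vector $\varphi_i(x_{i,h})-\varphi_i(x_i)\in\R^n$. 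By Lemma~\ref{T:evaluation critical point/value} (used in the proof of Claim~2) the assignment $h\mapsto x_{i,h}$ is continuous on $\mathcal{U}_3'$ in the $C^2$-sense, hence for each $i$ there is a $\delta_{\alpha(i)}>0$ such that any $h\in N_2(g,L,\varphi,\id_\R,\delta)$ satisfies $\lvert\varphi_i(x_{i,h})-\varphi_i(x_{i,g})\rvert<\varepsilon_{\alpha(i)}/(\lVert\lambda\rVert_{s,L_{\alpha(i)}}+1)$; setting $\delta_\alpha=1$ for all other $\alpha$, we conclude that $\chi(h)$ lies in the prescribed neighborhood of $\chi(g)$. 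This gives the continuity of $\chi$ (with respect to $\tau W^s$ on both sides for any $s$), and hence $\mathcal{U}_3=\chi^{-1}(\mathcal{O}_N)$ is open in $\mathcal{V}_2$.

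\textbf{Step 3: continuity of $\theta_3$.} By Lemma~\ref{T:continuity integral vectfield} the flow map $\theta:\mathcal{O}_N\to C^\infty(N\times I,N)$ is continuous with respect to the Whitney topologies, so $g\mapsto\Psi_g^2=\theta(\chi(g))(\cdot,1)$ is continuous from $\mathcal{U}_3$ into $\Diff(N)$. Each $\Psi_g^2$ differs from $\id_N$ only on the locally finite union $\bigcup_i\overline{Q_i(\gamma_i/4)}$, so it is proper; therefore the composition map $(g,\Psi)\mapsto g\circ\Psi$ is continuous at $(g,\Psi_g^2)$ by \cite[\S.2, Proposition~1]{MatherII}. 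Composing continuous maps yields continuity of $\theta_3(g)=g\circ\Psi_g^2$. Finally, $\theta_3(g)\in\mathcal{V}_3$ because $\Psi_g^2$ sends $x_i$ to $x_{i,g}$ (it is the time-1 flow of a vector field that, near $x_i$, is the constant vector $\varphi_i(x_{i,g})-\varphi_i(x_i)$), so $\Sigma(\theta_3(g))=\Sigma$, while $\Delta(\theta_3(g))=\Delta(g)=\Delta$ is inherited from $\mathcal{V}_2$.

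\textbf{Expected obstacle.} The delicate point is Step~2: although $X_g$ is supported in the \emph{infinite} locally finite union $\bigcup_i\overline{Q_i(\gamma_i/4)}$, continuity in the Whitney $C^\infty$-topology must be checked one $L_\alpha$ at a time, so the argument rests decisively on the separation ensured by (L3)--(L4) (only one critical ball can interact with a given $L_\alpha$) together with the local continuity of the critical-point assignment $h\mapsto x_{i,h}$ that was already established in the proof of Claim~2.
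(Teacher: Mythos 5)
Your proposal is correct and follows essentially the same route as the paper: the whole lemma reduces to continuity of $\chi$, which is established by the explicit formula $X_h-X_g = \lambda\cdot(\varphi_i(x_{i,h})-\varphi_i(x_{i,g}))$ on $L_{\alpha(i)}$ together with the critical-point estimate of Lemma~\ref{T:evaluation critical point/value}, and the rest follows from Lemma~\ref{T:continuity integral vectfield} and continuity of composition with a proper map. Your Steps 1 and 3 merely spell out details the paper compresses into ``it is enough to show that $\chi$ is continuous.''
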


\begin{proof}
By the definition, $\mathcal{U}_3$ is the preimage $\chi^{-1}(\mathcal{O}_N)$ of the open set $\mathcal{O}_N$ and $\Psi_g^2$ is a composition of a proper mapping $N\hookrightarrow N\times I$ defined by $x\mapsto (x,1)$ and $\theta(\chi(g))$. 
By \cite[\S.2, Propositions 1 and 2]{MatherII}, it is enough to show that $\chi$ is continuous. 
It is easy to verify that $\{(U_\alpha\times I, \varphi_\alpha\times \id)\}_{\alpha\in A}$ and $\{(\pi_{TN}^{-1}(U_\alpha), d\varphi_\alpha)\}_{\alpha\in A}$ are systems of coordinate neighborhoods of $N\times I$ and $TN$, respectively (here we identify $T\R^n$ with $\R^{2n}$ in the obvious way), $\{L_\alpha\times I\}_{\alpha\in A}$ is a locally finite covering of $N\times I$ consisting of compact subsets, and $L_\alpha\times I$ is contained in $U_\alpha\times I$ for any $\alpha\in A$.
Moreover, for any $g\in \mathcal{U}_3'$, $\chi(g)(L_\alpha\times I)$ is contained in $\pi_{TN}^{-1}(U_\alpha)$. 
By Theorem~\ref{T:basis Whitney topology}, it is enough to show that for any $g\in \mathcal{U}_3'$ and a system $\varepsilon=\{\varepsilon_{\alpha}\}_{\alpha\in A}$ of positive numbers, there exists a system $\delta$ such that:
\[
\chi(N_s(g,L,\varphi,\id,\delta)) \subset N_s(\chi(g),L\times I,\varphi\times \id, d\varphi, \varepsilon). 
\]
To show this, we first observe that for any $g, h \in \mathcal{U}_3'$ the support of $\chi(h)-\chi(g)$ is contained in $\cup_{i\in \mathcal{I}}\Cl{\varphi_i^{-1}(B(\gamma_i/4))}\times I$, which is a subset of $\cup_{i\in \mathcal{I}}L_i\times I$ and away from $L_\alpha \times I$ for any $\alpha \in A\setminus \mathcal{I}$ by the condition (U6). 
Furthermore, the following holds for any $i\in \mathcal{I}$ and $g, h \in \mathcal{U}_3'$: 
\begin{align*}
&\norm{d\varphi_{i} \circ (\chi(h)-\chi(g)) \circ (\varphi_{i}^{-1}\times \id)}_{s,\varphi_i(L_{i})\times I} \\
=&\left|\varphi_i(x_{i,h}) - \varphi_i(x_{i,g})\right| \cdot \norm{\lambda\circ \varphi_i^{-1}\circ \pi_N}_{s,\varphi_i(L_{i})\times I}. 
\end{align*}
Note that the norm $\norm{\lambda\circ \varphi_i^{-1}\circ \pi_N}_{s,\varphi_i(L_{i})\times I}$ does not depend on $g$ and $h$.
Using Lemma~\ref{T:evaluation critical point/value} to estimate $\left|\varphi_i(x_{i,h}) - \varphi_i(x_{i,g})\right|$, we can take a desired system $\delta$ in a quite similar way to that in the proof of Lemma~\ref{T:continuity theta2}.  
Details are left to the reader. 
\end{proof}

The mapping $\theta_3:\mathcal{U}_3\to\mathcal{V}_3$ satisfies the desired conditions. 
Indeed, $\theta_3(g)=g\circ \Psi_g^2$ is obviously $\mathcal{A}$-equivalent to $g$ for any $g\in \mathcal{U}_3$. 
Since $X_f$ is the zero vector field, the diffeomorphism $\Psi_f^2$ defined by the flow of $X_f$ is the identity mapping, and thus $\theta_3(f)=f$.

\subsection*{Proof of Claim~4}

Here, we will show that there exists an open neighborhood $\mathcal{U}_4\subset \mathcal{V}_3$ of $f$ such that any $g\in \mathcal{U}_4$ is $\mathcal{A}$-equivalent to $f$ (and then put $\theta_4(g) = f$). 
For a vector bundle $E$ over $N$, we define a $C^\infty(N)$-module $\Gamma_k(E)$ ($k\geq 0$) as follows (recall that $\Sigma$ is the critical point set of $f$): 
\[
\Gamma_k(E)=\{\xi\in \Gamma(E)~|~ j^k\xi|_{\Sigma}=0\}.  
\]
Let $\pi_N=\pi:N\times I \to N$ be the projection and 
\[
\Gamma_k(\pi^\ast E)=\{\xi \in \Gamma(\pi^\ast E)~|~ j^k\xi|_{\Sigma\times I}=0\}
\]
for $k\geq 0$, which is a $C^\infty(N\times I)$-module. 
We regard $\Gamma_k(E)$ as a subset of $\Gamma_k(\pi^\ast E)$ via the injection $\pi^\ast:\Gamma_k(E)\to \Gamma_k(\pi^\ast E)$.
Universality of tensor products yields a homomorphism 
\[
\iota_k:\Gamma_k(E)\otimes_{C^\infty(N)} C^\infty(N\times I) \to \Gamma_k(\pi^\ast E). 
\]

\begin{lemma}\label{T:surj iota_i}
The homomorphism $\iota_k$ is surjective. 

\end{lemma}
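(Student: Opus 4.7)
The plan is to reduce the surjectivity to the case of a trivial line bundle and then decompose functions with $k$-jet vanishing on $\Sigma\times I$ by a global Taylor expansion, exploiting that $\Sigma$ (the critical set of the locally stable $f$) is closed and discrete.

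\textbf{Step 1 (Reduction to functions).} Any vector bundle over a smooth manifold is a direct summand of a trivial bundle of some finite rank, so I can pick sections $s_1,\ldots,s_M\in \Gamma(E)$ for which $\{s_1(x),\ldots,s_M(x)\}$ spans $E_x$ at every $x\in N$. Equip $E$ with a bundle metric and form the surjective bundle map $A\colon N\times\R^M\to E$, $(x,c)\mapsto \sum_j c_j s_j(x)$; then $A\circ A^\ast$ is fibrewise positive definite (the $s_j$ span), so $A^+:=A^\ast(AA^\ast)^{-1}$ is a smooth bundle morphism with $A\circ A^+=\mathrm{id}_E$. For $\eta\in \Gamma_k(\pi^\ast E)$ set $(H_1,\ldots,H_M):=A^+\circ\eta\in C^\infty(N\times I,\R^M)$, so that $\eta=\sum_{j=1}^M H_j\,\pi^\ast s_j$; in any local trivialisation of $E$ each $H_j$ is a smooth $C^\infty(N)$-combination of the components of $\eta$, so Leibniz forces $j^k H_j|_{\Sigma\times I}=0$.

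\textbf{Step 2 (Global Taylor decomposition).} I would then prove that every $H\in C^\infty(N\times I)$ with $j^k H|_{\Sigma\times I}=0$ is a \emph{finite} $C^\infty(N\times I)$-linear combination of pullbacks $\pi^\ast\psi$ with $\psi\in C^\infty(N)$ and $j^k\psi|_\Sigma=0$. Enumerate $\Sigma=\{x_i\}_i$ and choose pairwise disjoint charts $(U_i,\varphi_i)$ with $\varphi_i(x_i)=0$, cut-offs $\lambda_i\in C^\infty(N)$ compactly supported in $U_i$ and identically $1$ on smaller neighbourhoods $V_i$ of $x_i$, and a partition of unity $\rho_0+\sum_i\rho_i=1$ with $\supp(\rho_i)\subset V_i$. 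For each multi-index $\alpha\in\N^n$ with $|\alpha|=k+1$, define
\[
\Psi_\alpha := \sum_i \lambda_i\,\varphi_i^\alpha \in C^\infty(N),
\]
which is smooth by disjointness and satisfies $j^k\Psi_\alpha|_\Sigma=0$; also pick $\Psi_0\in C^\infty(N)$ vanishing on a neighbourhood of $\Sigma$ and equal to $1$ on $\supp(\rho_0)$. Applying the parametric Taylor theorem to $H$ in the chart $\varphi_i$ (with $t$ as parameter), one obtains $H|_{U_i\times I}=\sum_{|\alpha|=k+1}\varphi_i^\alpha H^i_\alpha$ with $H^i_\alpha\in C^\infty(U_i\times I)$. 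Setting $\widetilde{H}_\alpha:=\sum_i \rho_i H^i_\alpha\in C^\infty(N\times I)$ (a locally finite, hence smooth, sum) and using the identity $\rho_i\varphi_i^\alpha=\rho_i\Psi_\alpha$ on all of $N$ (true on $V_i$ because $\Psi_\alpha=\varphi_i^\alpha$ there, and trivially outside since $\rho_i=0$), a direct verification gives
\[
H = \pi^\ast\Psi_0\cdot(\rho_0 H) + \sum_{|\alpha|=k+1} \pi^\ast\Psi_\alpha\cdot \widetilde{H}_\alpha,
\]
which is a finite sum of $1+\binom{n+k}{k+1}$ terms.

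\textbf{Step 3 (Conclusion).} Applying Step 2 to each $H_j$ produced in Step 1 exhibits $\eta$ as a finite sum
\[
\eta=\sum_{j,\alpha} H_{j,\alpha}\,\pi^\ast(\Psi_\alpha s_j)+\sum_{j} H_{j,0}\,\pi^\ast(\Psi_0 s_j),
\]
in which each $\Psi_\alpha s_j$ and $\Psi_0 s_j$ lies in $\Gamma_k(E)$ (as $\Psi_\alpha,\Psi_0$ have $k$-jet zero on $\Sigma$) and the coefficient functions lie in $C^\infty(N\times I)$; hence $\eta$ is $\iota_k$ of an element of $\Gamma_k(E)\otimes_{C^\infty(N)} C^\infty(N\times I)$, proving surjectivity. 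The main obstacle I anticipate is the bookkeeping when $\Sigma$ is infinite: one must arrange the charts $(U_i,\varphi_i)$ to be pairwise disjoint (possible precisely because $\Sigma$ is closed and discrete) and check that the locally finite sums defining $\Psi_\alpha$ and $\widetilde H_\alpha$ yield honest smooth global objects, so that what would otherwise be an infinite expression collapses to the \emph{finite} one required by the tensor product.
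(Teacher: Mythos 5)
Your proof is correct and follows essentially the same strategy as the paper's: reduce to the trivial (line) bundle case using that $E$ is a direct summand of a trivial bundle, then decompose a function whose $k$--jet vanishes on $\Sigma\times I$ via a Taylor/Hadamard expansion in pairwise disjoint charts around the discrete set $\Sigma$, glued by a partition of unity into a \emph{finite} sum of global products. The differences are only in execution: you perform a single order-$(k+1)$ expansion (which indeed gives $j^k\Psi_\alpha|_\Sigma=0$) and globalize with separate cutoffs $\lambda_i,\rho_i$, whereas the paper iterates Hadamard's lemma and distributes $(k+1)$-st roots of its bump functions $\varrho_i$ among the factors, and your explicit splitting $A^+$ replaces the paper's abstract ``surjective for $E_1\oplus E_2$ iff for each summand'' reduction.
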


\begin{proof}
For $k\geq 0$, we define $C^\infty_k(N)$ and $C^\infty_k(N\times I)$ as follows:
\begin{align*}
&C^\infty_k(N) = \{h\in C^\infty(N)~|~ j^kh|_{\Sigma}=0\},\\
&C^\infty_k(N\times I) = \{h\in C^\infty(N\times I)~|~ j^kh|_{\Sigma\times I}=0\}.
\end{align*}
We also regard $C^\infty_k(N)$ as a subset of $C^\infty_k(N\times I)$ via $\pi^\ast$. 
We take a partition of the unity $\{\varrho_i~|~i\in \mathcal{I}\}\cup\{\varrho_V\}$ of $N$ so that they satisfy the following conditions: 
\begin{itemize}

\item 
$\D \varrho_i \equiv 1$ in $\Cl{\varphi_i^{-1}(B(\nu_i/2))}$ and $\D \varrho_i \equiv 0$ outside $\Cl{\varphi_i^{-1}(B(\nu_i))}=\Cl{U_i}$, 

\item 
$\sqrt[n]{\varrho_i}$ is $C^\infty$ for any $n\in \Z_{\geq 0}$. 

\end{itemize}
We denote the function $p_l\circ \varphi_i: L_{i}\to \R$ by $y_l^i$, where $p_l:\R^n\to \R$ be the projection to the $l$-th component. 

We first prove Lemma~\ref{T:surj iota_i} under the assumption that $E$ is a trivial line bundle. 
In this case we can identify $\Gamma_k(E)$ and $\Gamma_k(\pi^\ast E)$ with $C^\infty_k(N)$ and $C^\infty_k(N\times I)$, respectively. 
Let $h\in C^\infty_k(N\times I)$. 
Since $j^k\varrho_V|_\Sigma=0$ for any $k\geq 0$, $\varrho_V$ is an element of $C^\infty_k(N)$ and thus $\varrho_V h = \iota_k(\varrho_V\otimes h) \in \Im(\iota_k)$. 
The support of the function $\varrho_i h$ is contained in $\Cl{U_i}\times I$, so we can regard this function as that on $\R^n\times I$. 
Since the $k$-jet $j^k \varrho_i h|_{\Sigma\times I}$ vanishes, we can decompose this function as follows: 
{\allowdisplaybreaks
\begin{align*}
(\varrho_i h)(x,t) &= (\sqrt[k+2]{\varrho_i}^{k+2}h)(x,t)\\
&=\left(\sqrt[k+2]{\varrho_i}(x)\right)^{k+2}\int_{0}^1\frac{d}{ds}\left(h(sx,t)\right)ds \\
&= \left(\sqrt[k+2]{\varrho_i}(x)\right)^{k+2}\sum_{l_1=1}^n y_{l_1}^i(x)h_{l_1}^i(x,t) \hspace{.3em}\left(h_{l_1}^i(x,t):=\int_{0}^1\frac{\Pa}{\Pa x_{l_1}}\left(h(sx,t)\right)ds\right)\\
&= \left(\sqrt[k+2]{\varrho_i}(x)\right)^{k+2}\sum_{l_1=1}^n y_{l_1}^i(x)\int_0^1 \frac{d}{ds}\left(h_{l_1}^i(sx,t)\right)ds \\
&\vdots \\
&= \sum_{l_1,\ldots,l_{k+1}=1}^n (\sqrt[k+2]{\varrho_i}y_{l_1}^i)(x)\cdots (\sqrt[k+2]{\varrho_i}y_{l_{k+1}}^i)(x)(\sqrt[k+2]{\varrho_i}h_{l_1,\ldots,l_{k+1}}^i)(x,t). 
\end{align*}
}
Since the support of $\sqrt[k+2]{\varrho_i}$ are contained in $\Cl{U_i}\times I$, we can extend the functions $\sqrt[k+2]{\varrho_i}y_{l_j}^i$ and $\sqrt[k+2]{\varrho_i}h_{l_1,\ldots,l_{k+1}}^i$ to those on $N\times I$, which we denote by the same symbols. 
It is easy to see that the function $\sum_{i\in \mathcal{I}} \sqrt[k+2]{\varrho_i}y_{l_1}^i\cdots \sqrt[k+2]{\varrho_i}y_{l_{k+1}}^i$ is contained in $C^\infty_k(N)$. 
We then obtain: 
{\allowdisplaybreaks
\begin{align*}
h =&\varrho_V h +\sum_{i\in \mathcal{I}}\varrho_i h \\
=& \varrho_V h + \sum_{l_1,\ldots,l_{k+1}=1}^n \left(\sum_{i\in \mathcal{I}}\sqrt[k+2]{\varrho_i}y_{l_1}^i\cdots \sqrt[k+2]{\varrho_i}y_{l_{k+1}}^i\right)
\cdot\left(\sum_{i\in \mathcal{I}}\sqrt[k+2]{\varrho_i}h_{l_1,\ldots,l_{k+1}}^i\right) \\
=& \iota_k(\varrho_V\otimes h) \\
&+ \sum_{l_1,\ldots,l_{k+1}=1}^n\iota_k\left(\left(\sum_{i\in \mathcal{I}}\sqrt[k+2]{\varrho_i}y_{l_1}^i\cdots \sqrt[k+2]{\varrho_i}y_{l_{k+1}}^i\right)
\otimes\left(\sum_{i\in \mathcal{I}}\sqrt[k+2]{\varrho_i}h_{l_1,\ldots,l_{k+1}}^i\right) \right).
\end{align*}
}%
Therefore, $h$ is contained in the image of $\iota_k$ and thus $\iota_k$ is surjective. 

We next show the lemma for a general vector bundle $E$. 
If $E$ admits a direct-sum decomposition $E= E_1\oplus E_2$, the homomorphism $\iota_k$ for $E$ is surjective if and only if $\iota_k$ for $E_1$ and $E_2$ are both surjective. 
Any vector bundle $E$ is a direct-sum summand of a trivial bundle, which is a direct-sum of trivial line bundles. 
Since we have already shown the lemma for a trivial line bundle, the statement for $E$ also holds. 
\end{proof}

Although the homomorphism $tf : \Gamma(TN) \to \Gamma(f^\ast T\R)$ is \textit{not} surjective, as its image is contained in $\Gamma_0(f^\ast T\R)$, we can show the following lemma in the same way as that in the proof of \cite[Ch.~III, Proposition 2.2]{GG}: 
\begin{lemma}\label{T:surj tf for Morse}
The mapping $tf : \Gamma_{k-1}(TN) \to \Gamma_k(f^\ast T\R)$ is surjective for any $k\geq 1$. 
\end{lemma}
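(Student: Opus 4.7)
The plan is to reduce the claim to the following: for any $h\in C^\infty(N)$ with $j^kh|_\Sigma=0$, construct a vector field $\xi$ on $N$ with $j^{k-1}\xi|_\Sigma=0$ and $df(\xi)=h$. The reduction is immediate because $f^\ast T\R$ is trivial, so $\Gamma_k(f^\ast T\R)$ is canonically identified with $\{h\in C^\infty(N)\mid j^kh|_\Sigma=0\}$. I will decompose $h$ using the partition of unity $\{\varrho_V\}\cup\{\varrho_i\}_{i\in I}$ from the proof of Lemma~\ref{T:surj iota_i} and treat the pieces concentrated near and supported away from $\Sigma$ separately.

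For the piece supported away from $\Sigma$: since $\varrho_i\equiv 1$ on $\overline{Q_i(\nu_i/2)}$, the support of $\varrho_V$ is disjoint from the neighborhood $\bigcup_i Q_i(\nu_i/2)$ of $\Sigma$, so $\varrho_Vh$ is supported on the open set on which $f$ is a submersion. I will cover this support by submersion charts, locally produce vector fields $\zeta_\beta$ with $df(\zeta_\beta)=1$, and combine them via a subordinate partition of unity to obtain a vector field $\xi_V$ on $N\setminus \Sigma$ with $df(\xi_V)=\varrho_Vh$ and $\supp(\xi_V)\subset \supp(\varrho_V)$. Extending $\xi_V$ by zero yields a smooth vector field on $N$ vanishing identically in a neighborhood of $\Sigma$, hence lying in $\Gamma_{k-1}(TN)$ automatically.

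For the piece concentrated near $x_i$: work in the Morse chart $\varphi_i$, in which $f\circ \varphi_i^{-1}(y)=\sum_l \varepsilon_l(y_l^i)^2+y_i$ with $\varepsilon_l=\pm 1$, so that $df = 2\sum_l \varepsilon_l y_l^i\,dy_l^i$. Because $j^kh|_{x_i}=0$, Hadamard's lemma on a star-shaped neighborhood of $0\in \varphi_i(U_{\alpha(i)})$ produces a decomposition
\[
h=\sum_{l=1}^n y_l^i\,\tilde h_l^i,\qquad \tilde h_l^i(y)=\int_0^1 \frac{\partial h}{\partial y_l^i}(ty)\,dt,
\]
and differentiation under the integral together with $j^k h|_{x_i} = 0$ shows that $j^{k-1}\tilde h_l^i|_{x_i}=0$. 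Setting
\[
\xi_i=\sum_{l=1}^n \frac{\varepsilon_l}{2}\,\varrho_i\,\tilde h_l^i\,\frac{\partial}{\partial y_l^i}
\]
on $U_{\alpha(i)}$ and extending by zero outside $L_{\alpha(i)}=\overline{Q_i(\nu_i)}$ yields a smooth vector field with $df(\xi_i)=\varrho_ih$; the jet condition $j^{k-1}\xi_i|_\Sigma=0$ follows because $\Sigma\cap L_{\alpha(i)}=\{x_i\}$ (distinct critical values $y_i\neq y_j$ force $x_j\notin U_{\alpha(i)}$), $\varrho_i\equiv 1$ near $x_i$, and $j^{k-1}\tilde h_l^i|_{x_i}=0$.

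The desired vector field is $\xi=\xi_V+\sum_{i\in I}\xi_i$; local finiteness of $\{L_{\alpha(i)}\}$ ensures smoothness, and by construction $df(\xi)=\varrho_V h+\sum_i \varrho_i h=h$. The main obstacle is the critical point piece: at $x_i$ the linear map $df_{x_i}$ vanishes, so $df$ cannot be inverted pointwise; the Morse normal form resolves this by factoring $df$ through the coordinate functions $y_l^i$, and the vanishing hypothesis $j^kh|_\Sigma=0$ supplies precisely the order of vanishing required for Hadamard's lemma to extract one factor of $y_l^i$ while leaving a coefficient whose $(k-1)$-jet vanishes at $x_i$, matching the target module $\Gamma_{k-1}(TN)$.
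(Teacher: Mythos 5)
Your argument is correct, and it is essentially the approach the paper intends: the paper deliberately gives no written proof here but cites the argument of Golubitsky--Guillemin (Ch.~III, Prop.~2.2) on infinitesimal stability of Morse functions, and the steps you spell out — a partition of unity splitting $h$ into a piece on the submersion locus and pieces near each critical point, inverting $df$ via submersion charts on the former, and a single Hadamard factorization through the Morse coordinates (which drops the jet order by exactly one) on the latter — are precisely that argument adapted to the $\Gamma_{k-1}\to\Gamma_k$ bookkeeping. The only mild point worth making explicit is that Hadamard's integral needs a star-shaped domain, which is why one should really decompose $\varrho_i h$ (supported in $\overline{Q_i(\nu_i)}$, a ball in the chart) rather than $h$ itself, exactly as the paper does in the proof of Lemma~\ref{T:surj iota_i}.
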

\noindent
We define $t'f:\Gamma_{k-1}(\pi^\ast TN) \to \Gamma_{k}(\pi^\ast f^\ast T\R)$ as in \cite[\S.7]{MatherII}.

\begin{lemma}\label{T:surj t'f}
The mapping $t'f$ is surjective. 

\end{lemma}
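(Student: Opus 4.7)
The plan is to deduce surjectivity of $t'f$ by combining the two preceding lemmas (surjectivity of $\iota_k$ and surjectivity of $tf$ on Morse functions). The key structural fact I would exploit is that $t'f$ is $C^\infty(N\times I)$--linear and is built from $tf$ by pulling back along $\pi$; concretely, the definition in \cite[\S.7]{MatherII} gives $t'f(\xi)(x,t)=df_x(\xi(x,t))$, which in particular yields the intertwining identity $t'f(\pi^*\zeta)=\pi^*(tf(\zeta))$ for every $\zeta\in\Gamma(TN)$, and more generally $t'f(h\cdot\pi^*\zeta)=h\cdot\pi^*(tf(\zeta))$ for $h\in C^\infty(N\times I)$. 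This compatibility is exactly what allows surjectivity to transfer through the tensor-product description provided by $\iota_k$.

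Given $\eta\in\Gamma_k(\pi^*f^*T\R)$, I would first apply Lemma~\ref{T:surj iota_i} to $E=f^*T\R$ to represent $\eta$ as a finite sum $\eta=\sum_j h_j\cdot\pi^*\xi_j$ with $\xi_j\in\Gamma_k(f^*T\R)$ and $h_j\in C^\infty(N\times I)$. Then by Lemma~\ref{T:surj tf for Morse}, each $\xi_j$ equals $tf(\zeta_j)$ for some $\zeta_j\in\Gamma_{k-1}(TN)$. Setting $\widetilde{\zeta}=\sum_j h_j\,\pi^*\zeta_j$, the pullback $\pi^*\zeta_j$ has vanishing $(k-1)$--jet along $\Sigma\times I$ (because $j^{k-1}\zeta_j|_\Sigma=0$), and multiplying by $h_j$ preserves this vanishing, so $\widetilde{\zeta}\in\Gamma_{k-1}(\pi^*TN)$. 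Applying $t'f$ and invoking the intertwining identity above gives
\[
t'f(\widetilde{\zeta})=\sum_j h_j\cdot\pi^*(tf(\zeta_j))=\sum_j h_j\cdot\pi^*\xi_j=\eta,
\]
which establishes surjectivity.

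The only real obstacle is a bookkeeping check: namely, confirming from Mather's definition that $t'f$ is indeed $C^\infty(N\times I)$--linear and satisfies $t'f\circ\pi^*=\pi^*\circ tf$, and that elements in the image of $\iota_k$ are exactly finite sums of the form $\sum_j h_j\cdot\pi^*\xi_j$. Once these compatibilities are in place, the argument reduces to a one-line diagram chase between $tf$, $t'f$, and the two instances of $\iota_k$ (for $TN$ and for $f^*T\R$), with no further analysis required.
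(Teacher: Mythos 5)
Your argument is correct and is essentially the paper's proof: the paper establishes surjectivity of $t'f$ by the same two ingredients (Lemma~\ref{T:surj iota_i} applied to $f^\ast T\R$ and $TN$, and Lemma~\ref{T:surj tf for Morse}) assembled into a commutative square, which you have simply unwound element-wise, including the needed check that $h_j\,\pi^\ast\zeta_j$ lies in $\Gamma_{k-1}(\pi^\ast TN)$. No substantive difference.
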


\begin{proof}
We can easily verify that the following diagram commutes:
\[
\begin{CD}
\Gamma_{k-1}(TN)\otimes_{C^\infty(N)} C^\infty(N\times I)  @>\iota_{k-1}>> \Gamma_{k-1}(\pi^\ast TN) \\
@Vtf\otimes \id VV @V t'f VV \\
\Gamma_{k}(f^\ast T\R)\otimes_{C^\infty(N)} C^\infty(N\times I) @>\iota_k>> \Gamma_{k}(\pi^\ast f^\ast T\R)
\end{CD}
\]
Since all the mappings except for $t'f$ are surjective by Lemmas \ref{T:surj iota_i} and \ref{T:surj tf for Morse}, so is $t'f$.  
\end{proof}
\noindent
We define the set $X$ as follows: 
\[
X=\{G\in C^\infty(N\times I)~|~ G_0 = f, \hspace{.3em} \Sigma(G_t)=\Sigma,\hspace{.3em}\Delta(G_t) = \Delta\},
\]
where $G_t\in C^\infty(N\times I)$ is defined by $G_t(x) = G(x,t)$ for $t\in I$. 
Let $x_0= f\circ \pi\in X$ and, following \cite[\S.7]{MatherII}, $\Gamma_k^X(\pi^\ast TN)$ be the set of germs of continuous mappings (with respect to the topologies $\tau W^\infty$) from $X$ to $\Gamma_k(\pi^\ast TN)$ at $x_0$. 
We define $C^X(N\times I)$ and $C^X_k(N\times I)$ in a similar manner. 
Note that $C^X(N\times I)$ is a ring with multiplication induced by that of $C^\infty(N\times I)$, and $C^X_k(N\times I)$ is a $C^X(N\times I)$-module in the obvious way. 
We denote the following set by $\Gamma_k^X(j^\ast T\R)$: 
\[
\left\{\xi:(X,x_0) \to C^\infty(N\times I, T\R)~\left|~\begin{minipage}[c]{58mm}
$\xi$ is continuous with respect to $\tau W^\infty$,

$j^k\xi(G)|_{\Sigma\times I} = 0$,

$\pi_{T\R} \circ \xi(G) = G \mbox{ for any }G\in X$.
\end{minipage}\right.\right\}.
\]
This set is also a $C^X(N\times I)$-module. 
The vector bundle $T\R$ is trivial and naturally identified with $\R\times \R$. 
Under the identification, we can define $p_2:T\R\to \R$ as the projection to the second (fiber) component.
This mapping induces a continuous mapping $p_{2,\ast}:C^\infty(N\times I,T\R)\to C^\infty(N\times I)$. 
It is easy to see that 
\[
(p_{2,\ast})_\ast:\Gamma^X_k(j^\ast T\R) \to C^X_k(N\times I) \mbox{ defined by }(p_{2,\ast})_\ast(\xi) = p_{2,\ast}\circ \xi
\]
is an isomorphism as $C^X(N\times I)$-modules. 
In what follows we will identify them via this isomorphism. 
We also regard $C_k^\infty(N\times I)$ as a subset of $C_k^X(N\times I)$ consisting of constant map-germs. 

\begin{lemma}\label{T:fg C_i^X}
The $C^X(N\times I)$-module $C_k^X(N\times I)$ is finitely generated. 
Furthermore, we can take a finite generating set of $C_k^X(N\times I)$ consisting of elements in $C_k^\infty(N\times I)$. 

\end{lemma}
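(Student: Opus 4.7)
The plan is to re-use the finite generating set identified in the proof of Lemma~\ref{T:surj iota_i} (in the case of a trivial line bundle), viewed this time as constant-valued germs of continuous maps from $(X,x_0)$. Explicitly, I would set
\[
G = \{\varrho_V\}\cup \{Y_{l_1,\ldots,l_k} \mid 1\le l_j\le n\} \subset C_k^\infty(N)\subset C_k^\infty(N\times I),
\]
where $Y_{l_1,\ldots,l_k}=\sum_{i\in I}\sqrt[k+1]{\varrho_i}\, y_{l_1}^i\cdots \sqrt[k+1]{\varrho_i}\, y_{l_k}^i$ is precisely the coefficient function appearing in that proof. Regarding every element of $G$ as the corresponding constant germ in $C_k^X(N\times I)$ produces a finite set of $1+n^k$ candidate generators all lying in $C_k^\infty(N\times I)$, which is exactly the additional property demanded by the lemma.

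Next, for an arbitrary germ $\xi \in C_k^X(N\times I)$, I would apply the Hadamard-type decomposition of Lemma~\ref{T:surj iota_i} pointwise in the parameter $g\in X$. This yields
\[
\xi(g) = \varrho_V \cdot \xi(g) + \sum_{l_1,\ldots,l_k=1}^{n} Y_{l_1,\ldots,l_k}\cdot H_{l_1,\ldots,l_k}(g),
\]
where $H_{l_1,\ldots,l_k}(g) = \sum_{i\in I}\sqrt[k+1]{\varrho_i}\,(\xi(g))^i_{l_1,\ldots,l_k}$ and $(\xi(g))^i_{l_1,\ldots,l_k}$ denotes the iterated Hadamard integral of $\xi(g)$ in the Morse chart $\varphi_i$. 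The claim that $\xi$ lies in the $C^X(N\times I)$-submodule generated by $G$ then reduces to showing that the coefficient maps $g\mapsto \xi(g)$ and $g\mapsto H_{l_1,\ldots,l_k}(g)$ are continuous germs from $(X,x_0)$ to $C^\infty(N\times I)$, hence genuine elements of $C^X(N\times I)$.

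The first coefficient is continuous by the hypothesis that $\xi$ is a continuous map-germ. For the second, the operator $h\mapsto h^i_{l_1,\ldots,l_k}$, given by a fixed iterated integral of partial derivatives of $h$ read in the chart $\varphi_i$, is $\tau W^\infty$-continuous on $C^\infty(U_{\alpha(i)}\times I)$; multiplying by the smooth compactly supported cutoff $\sqrt[k+1]{\varrho_i}$ extends it to a continuous operator into $C^\infty(N\times I)$, and the sum over $i\in I$ is locally finite thanks to local finiteness of $\{L_{\alpha(i)}\}_{i\in I}\subset \{L_\alpha\}_{\alpha\in A}$ (property (L4) combined with the local finiteness of $\{L_\alpha\}$). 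Composition with the continuous germ $\xi$ then produces the required continuity of $g\mapsto H_{l_1,\ldots,l_k}(g)$.

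I expect the main obstacle to be precisely this continuity verification in the $\tau W^\infty$ topology: one must confirm that the Hadamard operators together with the locally finite sum assemble into a $\tau W^\infty$-continuous operator on $C^\infty(N\times I)$, uniformly for $g$ ranging over a neighborhood of $x_0$ in $X$. Once this is in place --- essentially by observing that on each compact $L_\alpha$ only finitely many terms contribute and each is a continuous linear expression in finitely many derivatives of $\xi(g)$ on the same compact set --- the lemma will follow with $G$ as the explicit finite generating set whose elements all lie in $C_k^\infty(N\times I)$.
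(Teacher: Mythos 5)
Your proposal follows essentially the same route as the paper: a Hadamard-type decomposition relative to the partition of unity $\{\varrho_i\}\cup\{\varrho_V\}$, with the coordinate combinations and $\varrho_V$ as the finite generating set and the Hadamard integrals as the $C^X(N\times I)$--coefficients, the whole point being $\tau W^\infty$--continuity of the coefficient operators (which both you and the paper reduce to a composition of elementary continuous operations together with local finiteness/disjointness of the supports of the $\varrho_i$). The only organizational difference is that the paper performs a \emph{single} Hadamard step and then inducts on $k$, whereas you iterate $k$ times at once; that is cosmetic.

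One concrete point needs fixing, however. With the convention $C_k^\infty(N\times I)=\{h \mid j^kh|_{\Sigma\times I}=0\}$, i.e.\ $h$ lies in the $(k+1)$-st power of the ideal of $\Sigma\times I$, your generators $Y_{l_1,\ldots,l_k}=\sum_{i}\sqrt[k+1]{\varrho_i}\,y^i_{l_1}\cdots\sqrt[k+1]{\varrho_i}\,y^i_{l_k}$ contain only $k$ coordinate factors, so they vanish to order $k$ on $\Sigma$ and hence lie in $C_{k-1}^\infty(N\times I)$ but in general \emph{not} in $C_k^\infty(N\times I)$, which is precisely the extra property the lemma demands. (You inherit this from the displayed computation in the proof of Lemma~\ref{T:surj iota_i}; by contrast, the paper's induction in Lemma~\ref{T:fg C_i^X} automatically accumulates one extra vanishing factor per step, so its unwound generators are products of $k+1$ factors of the form $\sum_i\sqrt{\varrho_i}\,y^i_l$, possibly times the flat function $\varrho_V$, and these do lie in $C_k^\infty(N\times I)$.) The fix is immediate: run the Hadamard decomposition $k+1$ times rather than $k$, producing generators $Y_{l_1,\ldots,l_{k+1}}$; the continuity argument for the coefficients is unchanged.
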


\begin{proof}
We will prove Lemma~\ref{T:fg C_i^X} by induction on $k$. 
Let $h\in C_0^X(N\times I)$ and $\{\varrho_i~|~i\in \mathcal{I}\}\cup \{\varrho_V\}$ be the partition of the unity given in the proof of Lemma~\ref{T:surj iota_i}. 
By the definition, $\varrho_V$ is contained in $C_0^\infty(N\times I)$.
As in the proof of Lemma~\ref{T:surj iota_i}, we can decompose $h$ as follows: 
\[
\sum_{i\in \mathcal{I}} \varrho_i h = \sum_{l=1}^{n} \left( \sum_{i\in \mathcal{I}}\sqrt{\varrho_i}h_{l}^i \right)\left( \sum_{i\in \mathcal{I}} \sqrt{\varrho_i} y_l^i \right), 
\]
where $\sqrt{\varrho_i}h_l^i$ is a map-germ defined as follows: 
{\allowdisplaybreaks
\begin{align*}
&\sqrt{\varrho_i}h_l^i:(X,x_0) \to C^\infty(N\times I), \\ &\left(\sqrt{\varrho_i}h_l^i(G)\right)(x,t) = \begin{cases}
\sqrt{\varrho_i}(x)\bigints_{\hspace{.25em}0}^{1}\dfrac{\Pa }{\Pa x_l}\left( h(G) (sx,t)\right) ds & (x\in U_{i}) \\
0 & (\mbox{otherwise}).
\end{cases}
\end{align*}
}%
The map-germ $\sum_{i\in \mathcal{I}}\sqrt{\varrho_i}h_l^i$ can be obtained by composing $h$ to the following mappings: 
{\allowdisplaybreaks
\begin{align*}
&C_0^\infty(N\times I) \xrightarrow{\Omega_1} C_0^\infty(\sqcup_{i\in \mathcal{I}}\Cl{U_i}\times I) \xrightarrow{\Omega_2}C_0^\infty(\sqcup_{i\in \mathcal{I}}\Cl{B(\nu_i)}\times I)\\
&\xrightarrow{\Omega_3} C_0^\infty(I\times \sqcup_{i\in \mathcal{I}}\Cl{B(\nu_i)}\times I)\xrightarrow{\Omega_4}C^\infty(I\times \sqcup_{i\in \mathcal{I}}\Cl{B(\nu_i)}\times I)\\
&\xrightarrow{\Omega_5}C^\infty(\sqcup_{i\in \mathcal{I}}\Cl{B(\nu_i)}\times I)\xrightarrow{\Omega_2^{-1}}C_0^\infty(\sqcup_{i\in \mathcal{I}}\Cl{U_i}\times I)\\
&\xrightarrow{\Omega_6}C_\Pa^\infty(\sqcup_{i\in \mathcal{I}}\Cl{U_{i}}\times I)\xrightarrow{\Omega_7}C^\infty(N\times I),
\end{align*}
}%
where, 
\begin{enumerate}

\item
$\Omega_1$ is the restricting mapping, 

\item 
$\Omega_2$ is defined by $\Omega_2(\xi)=\xi\circ (\sqcup_{i\in \mathcal{I}}\varphi_i^{-1})$, where $\sqcup_{i\in \mathcal{I}}\varphi_i^{-1}$ is a diffeomorphism of $\sqcup_{i\in \mathcal{I}}\Cl{B(\nu_i)}$ which is equal to $\varphi_i^{-1}$ on $\Cl{B(\nu_i)}$,

\item 
$\Omega_3$ is defined by $\Omega_3(\xi)=\xi \circ \sigma$, where $\sigma = \sqcup_{i\in \mathcal{I}}\sigma_i$ and $\sigma_i:I\times \Cl{B(\nu_i)}\times I\to \Cl{B(\nu_i)}\times I$ is defined by $\sigma_i(s,x,t) = (sx,t)$ (note that $\sigma$ is proper), 

\item 
$\Omega_4$ is defined by $\Omega_4(\xi)=\frac{\Pa \xi}{\Pa x_l}$, 

\item 
$\Omega_5$ is defined by $\Omega_5(\xi)=\int_0^1\xi ds$, 

\item 
$\Omega_6$ is defined by $\Omega_6(\xi)=(\sqcup_{i\in \mathcal{I}}\sqrt{\varrho_i})\xi=\sum_{i\in \mathcal{I}}\sqrt{\varrho_i}\xi$, where $C_\Pa^\infty(\sqcup_{i\in \mathcal{I}}\Cl{U_i}\times I)$ is a subset of $C^\infty(\sqcup_{i\in \mathcal{I}}\Cl{U_i}\times I)$ consisting of functions which are $0$ on a neighborhood of $\sqcup_{i\in \mathcal{I}}\Pa U_{i}\times I$. 

\item 
$\Omega_7$ is the extension mapping. 

\end{enumerate}
\noindent
We can easily check that these mappings are all continuous (with respect to the topologies $\tau W^\infty$). 
Thus, $\sum_{i\in \mathcal{I}}\sqrt{\varrho_i}h_l^i$ is also continuous, that is, it is an element of $C^X(N\times I)$. 
The element $h$ is eventually decomposed as follows: 
\[
h = h\varrho_V + \sum_{l=1}^{n} \left( \sum_{i\in \mathcal{I}}\sqrt{\varrho_i}h_{l}^i \right)\left( \sum_{i\in \mathcal{I}} \sqrt{\varrho_i} y_l^i \right). 
\]
This decomposition shows that $C^X_0(N\times I)$ is generated by $\varrho_V \in C_0^\infty(N\times I)$ and $\sum_{i\in \mathcal{I}} \sqrt{\varrho_i} y_1^i,\ldots, \sum_{i\in \mathcal{I}} \sqrt{\varrho_i} y_n^i\in C_0^\infty(N\times I)$. 

For $h\in C_k^X(N\times I)$ with general $k$, we can obtain the following decomposition in the same way as above: 
\[
h = \varrho_V h + \sum_{l=1}^{n} \left( \sum_{i\in \mathcal{I}}\sqrt{\varrho_i} h_l^i \right)\left( \sum_{i\in \mathcal{I}} \sqrt{\varrho_i} y_l^i \right).  
\]
The function $\varrho_V$ is contained in $C_k^\infty(N\times I)$. 
Moreover, we can deduce from construction of $\sqrt{\varrho_i}h_l^i$ that the map-germ $\sum_{i\in \mathcal{I}}\sqrt{\varrho_i} h_l^i $ is contained in $C^X_{k-1}(N\times I)$, which has a finite generating set $g_1,\ldots, g_m\in C^\infty_{k-1}(N\times I)$ for some $m\in \N$ by the induction hypothesis. 
Thus, the decomposition of $h$ above shows that $C_k^X(N\times I)$ is generated by $\varrho_V$ and $g_k \left( \sum_{i\in \mathcal{I}} \sqrt{\varrho_i} y_l^i \right)$'s ($k=1,\ldots, m$ and $l=1,\ldots, n$), which are contained in $C_k^\infty(N\times I)$. 
\end{proof}

We define $\mathrm{Ev}:C^X(N\times I)\to C^\infty(N\times I)$ and $\mathrm{Ev}_k:C_k^X(N\times I)\to C_k^\infty(N\times I)$ by evaluating a map-germ at $x_0$. 

\begin{lemma}\label{T:relation kernel evaluation}
For any $k\geq 0$, $\Ker(\mathrm{Ev}_k) = \Ker(\mathrm{Ev})\cdot C_k^X(N\times I)$. 

\end{lemma}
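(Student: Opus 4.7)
The plan is to prove the two inclusions separately, with the nontrivial one reducing quickly to the finite generation statement already established in Lemma~\ref{T:fg C_i^X}.

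For the inclusion $\Ker(\mathrm{Ev})\cdot C_k^X(N\times I) \subset \Ker(\mathrm{Ev}_k)$, I would simply note that both $\mathrm{Ev}$ and $\mathrm{Ev}_k$ are $C^X(N\times I)$--module homomorphisms compatible with the inclusion $C^\infty_k(N\times I)\subset C^\infty(N\times I)$; for $a\in\Ker(\mathrm{Ev})$ and $b\in C_k^X(N\times I)$, one has $\mathrm{Ev}_k(ab)=\mathrm{Ev}(a)\cdot \mathrm{Ev}_k(b)=0$.

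For the reverse inclusion, I would invoke Lemma~\ref{T:fg C_i^X} to fix a finite generating set $\xi_1,\ldots,\xi_m\in C_k^\infty(N\times I)$ of the $C^X(N\times I)$--module $C_k^X(N\times I)$. Given $h\in \Ker(\mathrm{Ev}_k)$, write $h=\sum_{j=1}^m c_j\xi_j$ for some $c_j\in C^X(N\times I)$. Regarding each $\xi_j$ as a constant map-germ in $C_k^X(N\times I)$, we have $\mathrm{Ev}_k(\xi_j)=\xi_j$, hence
\[
h=\sum_{j=1}^m\bigl(c_j-\mathrm{Ev}(c_j)\bigr)\xi_j+\sum_{j=1}^m\mathrm{Ev}(c_j)\,\xi_j.
\]
The last sum equals $\mathrm{Ev}_k(h)=0$ by the assumption on $h$, while each $c_j-\mathrm{Ev}(c_j)$ lies in $\Ker(\mathrm{Ev})$ by construction. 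Since each $\xi_j\in C_k^\infty(N\times I)\subset C_k^X(N\times I)$, the first sum belongs to $\Ker(\mathrm{Ev})\cdot C_k^X(N\times I)$, and the desired equality follows.

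The only point requiring care—which is not really an obstacle—is to check that the generating set furnished by Lemma~\ref{T:fg C_i^X} consists of elements of $C^\infty_k$ viewed as constant map-germs (this is precisely the second sentence of that lemma), so that the subtraction $c_j-\mathrm{Ev}(c_j)$ makes sense and produces elements of $\Ker(\mathrm{Ev})$. No further structure of $N$, $f$, or $\Sigma$ enters the argument.
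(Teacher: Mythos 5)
Your proof is correct. The easy inclusion is handled exactly as in the paper (evaluation at $x_0$ is multiplicative, so $\mathrm{Ev}_k(ab)=\mathrm{Ev}(a)\cdot\mathrm{Ev}_k(b)$), but for the nontrivial inclusion you take a somewhat different route. The paper does not fix a generating set and manipulate coefficients; it re-runs the explicit decomposition procedure from the \emph{proof} of Lemma~\ref{T:fg C_i^X} on an element $h\in\Ker(\mathrm{Ev}_k)$ and observes that each term it produces already lies in $\Ker(\mathrm{Ev})\cdot C_k^X(N\times I)$ (because the coefficient germs $\varrho_V h$, $\sum_i\sqrt{\varrho_i}h^i_{l_1,\ldots}$, etc.\ are built linearly from $h$ and hence vanish at $x_0$). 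You instead use only the \emph{statement} of Lemma~\ref{T:fg C_i^X} --- finite generation by elements of $C_k^\infty(N\times I)$ regarded as constant map-germs --- and run the standard algebraic argument: write $h=\sum_j c_j\xi_j$, note $\mathrm{Ev}_k(h)=\sum_j\mathrm{Ev}(c_j)\xi_j=0$, and subtract to get $h=\sum_j\bigl(c_j-\mathrm{Ev}(c_j)\bigr)\xi_j$ with each coefficient in $\Ker(\mathrm{Ev})$. This is valid: the inclusion $C^\infty(N\times I)\hookrightarrow C^X(N\times I)$ as constant germs is a ring homomorphism, so the vanishing of $\sum_j\mathrm{Ev}(c_j)\xi_j$ as a function implies its vanishing as a germ, and the sum is finite so there is no convergence issue. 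Your version is cleaner in that it is insulated from the details of the decomposition procedure and works for any module finitely generated by ``constants''; the paper's version avoids introducing the coefficients $c_j$ but implicitly requires checking that the decomposition procedure respects $\Ker(\mathrm{Ev})$, which your argument makes unnecessary.
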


\begin{proof}
It is clear that the module $\Ker(\mathrm{Ev})\cdot C_k^X(N\times I)$ is contained in $\Ker(\mathrm{Ev}_k)$. 
On the other hand, by applying the procedure in the proof of Lemma~\ref{T:fg C_i^X} for decomposing a map-germ to $h \in \Ker(\mathrm{Ev}_k)$, we can decompose $h$ into a sum of elements in $\Ker(\mathrm{Ev})\cdot C_k^X(N\times I)$. 
\end{proof}

We define a $C^X(N\times I)$-module homomorphism $t\overline{j}:\Gamma_{k-1}^X(\pi^\ast TN) \to \Gamma_k^X(j^\ast T\R)$ as follows: 
\[
t\overline{j}([\xi])(g) = t'x (\xi(g))\mbox{ for }g\in X. 
\]

\begin{lemma}\label{T:surj toverlinej}
The mapping $t\overline{j}$ is surjective and the following equality holds:
\[
\Ker\left(\mathrm{Ev}_k:\Gamma_k^X(j^\ast T\R)\to \Gamma_k((f\circ \pi)^\ast T\R)\right) =t\overline{j}\left(\Ker(\mathrm{Ev})\cdot \Gamma_{k-1}^X(\pi^\ast TN)\right).
\]
 
\end{lemma}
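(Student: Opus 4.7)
My plan is to reduce surjectivity of $t\overline{j}$ to lifting finitely many explicit generators, and then to deduce the kernel identity as a formal consequence.

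The first step is to apply Lemma~\ref{T:fg C_i^X} to fix a finite generating set $h_1,\ldots,h_m\in C_k^\infty(N\times I)$ of $C_k^X(N\times I)\cong \Gamma_k^X(j^\ast T\R)$ as a $C^X(N\times I)$--module. Since $t\overline{j}$ is $C^X(N\times I)$--linear, it will be enough to produce, for each $h_j$, a continuous map-germ $\zeta_j:(X,x_0)\to \Gamma_{k-1}(\pi^\ast TN)$ such that $t'x(\zeta_j(x))=h_j$ for every $x$ in a neighborhood of $x_0$.

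For this construction I would mimic the argument behind Lemma~\ref{T:surj t'f}, now carrying the dependence on $x$ through the steps. The hypotheses $\Sigma(x_t)=\Sigma$ and $\Delta(x_t)=\Delta$, together with the closeness of $x_t$ to $f$, force each $x_t$ to be Morse with the same critical set and critical values as $f$. A parameterized Morse lemma then provides, for each $i\in I$, coordinates $\Psi_i^{t,x}$ near $x_i$ that depend continuously on $x$ and smoothly on $t$, in which $x_t$ takes the form $\pm w_1^2\pm\cdots\pm w_n^2+y_i$. Solving $t'x(\xi)=h_j$ near $x_i$ then reduces to $\sum_l(\pm 2w_l)\xi_l=h_j\circ \Psi_i^{t,x}$; applying Hadamard's lemma to the right-hand side (which vanishes to order $k+1$ at the origin) yields an explicit $\xi$ of the required $(k-1)$--th-order vanishing, depending continuously on $x$. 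Away from $\Sigma$, the formula $\zeta_j(x)=h_j\cdot \nabla_N x/|d_N x|^2$ (with respect to any fixed Riemannian metric on $N$) gives a well-defined continuous lift, since $d_N x$ is nowhere vanishing off $\Sigma\times I$ for $x$ close to $x_0$. I would then glue these local constructions using the partition of unity $\{\varrho_i\}_{i\in I}\cup\{\varrho_V\}$ used in the proof of Lemma~\ref{T:surj iota_i}.

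For the kernel identity, one inclusion is immediate: if $\zeta=\sum_i b_i\zeta_i$ with $b_i\in\Ker(\mathrm{Ev})$, then $t\overline{j}(\zeta)(x_0)=\sum_i b_i(x_0)\,t'f(\zeta_i(x_0))=0$. For the converse, given $\eta\in \Ker(\mathrm{Ev}_k)$, Lemma~\ref{T:relation kernel evaluation} allows me to write $\eta=\sum_i b_i\eta_i$ with $b_i\in\Ker(\mathrm{Ev})$ and $\eta_i\in C_k^X(N\times I)\cong \Gamma_k^X(j^\ast T\R)$; lifting each $\eta_i=t\overline{j}(\zeta_i)$ by the already-established surjectivity, $C^X$--linearity gives $\eta=t\overline{j}(\sum_i b_i\zeta_i)$ with $\sum_i b_i\zeta_i\in \Ker(\mathrm{Ev})\cdot \Gamma_{k-1}^X(\pi^\ast TN)$.

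The main obstacle is the continuous dependence of $\zeta_j$ on $x$, which will come down to continuity of the parameterized Morse lemma in the perturbation parameter $x$, together with the compatibility of the Hadamard formula and the partition-of-unity gluing under this parameter. The explicit character of Hadamard's construction and the fact that $\Sigma\times I$ stays fixed across $X$ are what should make this dependence tractable; verifying it carefully in the topologies $\tau W^\infty$ is the delicate point.
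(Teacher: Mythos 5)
Your proposal is correct in outline and shares the paper's skeleton: the reduction, via $C^X(N\times I)$--linearity, to lifting the finite set of \emph{constant} generators supplied by Lemma~\ref{T:fg C_i^X}, and the kernel identity deduced from Lemma~\ref{T:relation kernel evaluation} together with surjectivity, are exactly the paper's two steps. Where you diverge is in how the generators are shown to lie in the image. The paper does no new analytic work there: it invokes the commutative square relating $t\overline{j}$ to $t'f$ and takes the constant lift $\xi_j$ with $t'f(\xi_j)=h_j$ from Lemma~\ref{T:surj t'f}. Note, however, that for a constant germ $\xi_0$ one has $t\overline{j}(\xi_0)(x)=t'x(\xi_0)$, which agrees with the constant germ $t'f(\xi_0)$ only after evaluation at $x_0$; so, read strictly, the paper's square places the generators in $\Im(t\overline{j})+\Ker(\mathrm{Ev}_k)$, and one finishes with Lemma~\ref{T:relation kernel evaluation} and a Nakayama-type step as in Mather's \S 7. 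Your construction instead builds an honest $x$--dependent lift $\zeta_j$ with $t'x(\zeta_j(x))=h_j$ for every $x$ near $x_0$ (parameterized Morse lemma plus Hadamard near $\Sigma$, the gradient formula off $\Sigma$, glued by the fixed partition of unity), which hits the generators on the nose and renders that algebraic finish unnecessary. What this buys is a self-contained surjectivity argument; what it costs is the continuity verification you correctly flag, which does go through here because $\Sigma$ and $\Delta$ are the same for every $x\in X$, the local constructions take place on the fixed relatively compact sets $L_{\alpha(i)}$ where the Whitney topology controls all derivatives uniformly, and each step (the Hadamard integrals, division by $\lvert d_Nx\rvert^2$ on compacta away from $\Sigma$, the locally finite gluing) is an explicit continuous operation. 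One minor point to make precise in a full write-up: the Morse coordinates for $x_t$ need only be defined on a small fixed neighborhood of each $x_i$, so on the rest of $\supp\varrho_i$ you should interpolate with the gradient formula rather than assume the normal form extends over all of $\overline{Q_i(\nu_i)}$.
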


\begin{proof}
For surjectivity of $t\overline{j}$, it is sufficient to see that a generating set of $\Gamma_k^X(j^\ast T\R)$ (as a $C^X(N\times I)$-module) is contained in the image $t\overline{j}(\Gamma_{k-1}^X(\pi^\ast TN))$ since $t\overline{j}$ is a $C^X(N\times I)$-module homomorphism. 
The following diagram commutes: 
\[
\xymatrix{
\Gamma_{k-1}^X(\pi^\ast TN) \ar[r]^{t\overline{j}} & \Gamma_k^X(j^\ast T\R) \\
\Gamma_{k-1}(\pi^\ast TN) \ar[u]\ar[r]^{t'f} & \Gamma_k((f\circ \pi)^\ast T\R), \ar[u]
}
\]
where the vertical arrows are the inclusion mappings (note that we regard $\Gamma_l(E)$ as a subset of $\Gamma_l^X(E)$ consisting of constant map-germs).
Since the mapping $t'f$ is surjective by Lemma~\ref{T:surj t'f}, the image of $t\overline{j}$ contains $\Gamma_k((f\circ \pi)^\ast T\R) \subset \Gamma_k^X(j^\ast T\R)$, in which we can take a generating set of $\Gamma_k^X(j^\ast T\R)$ by Lemma~\ref{T:fg C_i^X}. 

As for the statement on the kernel, we can first obtain $\Ker(\mathrm{Ev}_k) = \Ker(\mathrm{Ev})\cdot \Gamma_k^X(j^\ast T\R)$ by Lemma~\ref{T:relation kernel evaluation}. 
Since $\Gamma_k^X(j^\ast T\R)$ is equal to $t\overline{j}(\Gamma_{k-1}^X(\pi^\ast TN))$, we obtain:
\[
\Ker(\mathrm{Ev}_k) = \Ker(\mathrm{Ev})\cdot t\overline{j}(\Gamma_{k-1}^X(\pi^\ast TN)) = t\overline{j}\left(\Ker(\mathrm{Ev})\Gamma_{k-1}^X(\pi^\ast TN)\right). 
\]
\end{proof}

For $\alpha \in A$, we define $\mu_\alpha \in \R$ as follows: 
\[
\mu_\alpha = \min_{x\in L_\alpha}\norm{D^1f(x)}. 
\]
By the condition (U6), $f$ has no critical point in $L_\alpha$, in particular $\mu_\alpha >0$ for any $\alpha \in A\setminus \mathcal{I}$. 
We define a system $\tilde{e} = \{\tilde{e}_\alpha\}_{\alpha \in A}$ as follows:
\[
\tilde{e}_\alpha = \begin{cases}
e_\alpha = \dfrac{\gamma_i}{4n} & (\alpha = i\in \mathcal{I})\\[5pt]
\dfrac{\mu_\alpha}{2} & (\alpha \not\in \mathcal{I}).
\end{cases}
\]
Let $\mathcal{U}_4'=N_2(f,L,\varphi,\id,\tilde{e})\cap \mathcal{V}_3$. 
For $g\in\mathcal{U}_4'$, we define $\overline{g}\in C^\infty(N\times I)$ as follows: 
\[
\overline{g}:N\times I \to \R, \hspace{.5em}\overline{g}(x,t) = tg(x) + (1-t) f(x). 
\]
This function is an element of $X$. 
Indeed, it is obvious that $g_0$ is equal to $f$. 
Since $dg_t = df + t (dg-df)$ and $\Sigma(g) = \Sigma$, $\Sigma$ is contained in $\Sigma(g_t)$ for any $t\in I$. 
Moreover, the following inequalities hold for any $t\in I$, $\alpha\in A$ and $x\in L_\alpha$: 
\begin{align*}
&\norm{D^1g_t(x)} \geq \norm{D^1f(x)} - t\norm{D^1(g-f)(x)}\geq \norm{D^1f(x)} - \tilde{e}_\alpha, \\
&\norm{g_t-f}_{2,L_i} \leq \norm{g-f}_{2,L_i} < \frac{\nu_i}{2n}. 
\end{align*}
The first inequality implies $\norm{D^1g_t(x)}$ is greater than $\mu_\alpha/2>0$ for $\alpha \in A\setminus \mathcal{I}$. 
Thus, there exist no critical points in $L_\alpha$. 
On the other hand, as we observed in the beginning of the proof of Claim 2, we can deduce from the two inequalities above that for each $i\in \mathcal{I}$ there exists exactly one critical point of $g$ in $\varphi_i^{-1}(B(\gamma_i/8))\subset L_i$ (which is $x_i$ as $\Sigma \subset \Sigma(g_t)$). 
We can therefore conclude that $\Sigma(g_t) = \Sigma$. 
The critical value set $\Delta(g_t)$ is also equal to $\Delta$ as $\Delta(g) = \Delta$. 

We define the mapping $\theta_4':\mathcal{U}_4'\to X$ by $\theta_4'(g)= \overline{g}$.
This function is continuous with respect to the topology $\tau W^\infty$ (cf.~\cite[\S.2, Propositions 2 and 3]{MatherII}). 
We further define a mapping $\xi:X\to C^\infty(N\times I,T\R)$ as follows: 
\[
\xi(G)(x,t) = \frac{d}{dt}\left(G(x,t) \right).
\]
It is easy to check that $\xi$ is continuous and $\pi_{T\R} \circ \xi(G) =G$ for any $G\in X$. 
Moreover, since $G(x_i,t) = y_i$ and $\frac{\Pa (G\circ (\varphi_i^{-1}\times \id))}{\Pa x_m}(0,t)=0$ for any $G\in X$, $t\in I$, $x_i\in \Sigma$ and $m=1,\ldots, n$, $j^1\xi(G)|_{\Sigma\times I}$ is equal to $0$ for any $G\in X$. 
Thus, $\xi$ represents an element in $\Gamma^X_1(j^\ast T\R)$. 
Since $\xi(x_0)$ is equal to $0$, $[\xi]$ is an element in $\Ker(\mathrm{Ev}_1)$. 
By Lemma~\ref{T:surj toverlinej}, there exists $\zeta\in \Ker(\mathrm{Ev})\cdot \Gamma_0^X(\pi^\ast TN)$ such that $[\xi]=t\overline{j}(\zeta)$. 
Let $\tilde{\zeta}$ be a representative of $\zeta$ (which is a mapping from a neighborhood of $x_0$ in $X$ to $\Ker(\mathrm{Ev})\cdot \Gamma_0(\pi^\ast TN)$), and $\mathcal{O}_N\subset \Gamma(\pi^\ast TN)$ be a neighborhood of the zero-section we can take by Lemma~\ref{T:continuity integral vectfield}.
Since $\tilde{\zeta}(x_0) = 0$, we can take an open neighborhood $\mathcal{U}_4''\subset X$ of $x_0$ so that the image $\tilde{\zeta}(\mathcal{U}_4'')$ is contained in $\mathcal{O}_N$. 
Let $\mathcal{U}_4 = (\theta_4')^{-1}(\mathcal{U}_4'')$ and we define a diffeomorphism $\Psi_g^3:N\to N$ for for $g\in \mathcal{U}_4$ as follows: 
\[
\Psi_g^3(x)=\theta\left(\tilde{\zeta}(\theta_4'(g))\right)(x,1).
\] 
We can verify that $g\circ \Psi_g^3$ is equal to $f$ for any $g\in \mathcal{U}_4$ (cf.~\cite[\S.7]{MatherII}), and we eventually complete the proof of Claim 4, and (1) of Theorem~\ref{T:suff condi stability}.

In summary, for any $g\in \mathcal{U}=(\theta_3\circ \theta_2\circ \theta_1)^{-1}(\mathcal{U}_4)$, we have constructed self-diffeomorphisms 
\[
\beta(g), \Psi_{\theta_1(g)}^1, \Psi_{\theta_2\circ \theta_1(g)}^2, \Psi_{\theta_3\circ \theta_2,\theta_1(g)}^3:N\to N
\] and a self-diffeomorphism $\psi_{\theta_1(g)}:\R\to \R$ satisfying the following: 
\[
\psi_{\theta_1(g)}^{-1}\circ g\circ \beta(g)\circ \Psi_{\theta_1(g)}^1 \circ \Psi_{\theta_2\circ \theta_1(g)}^2  \circ \Psi_{\theta_3\circ \theta_2,\theta_1(g)}^3= f. 
\]
We denote the diffeomorphisms $\beta(g)\circ \Psi_{\theta_1(g)}^1 \circ \Psi_{\theta_2\circ \theta_1(g)}^2  \circ \Psi_{\theta_3\circ \theta_2,\theta_1(g)}^3$ and $\psi_{\theta_1(g)}^{-1}$ by $\beta_s(g)$ and $\beta_t(g)$, respectively. 
As shown in the proof of Lemma~\ref{T:continuity theta2}, the mapping $\beta_t:\mathcal{U}\to \Diff(\R)$ is continuous. 
We have also verified that the mappings $\beta$, $g\mapsto \Psi_{\theta_2\circ \theta_1(g)}^2$, and $g\mapsto\Psi_{\theta_3\circ \theta_2,\theta_1(g)}^3$ are continuous. 
Moreover, $\Psi_{\theta_1(g)}^1$ becomes the identity when $f$ is quasi-proper (cf.~Remark~\ref{Re:case quasi-proper}). 
Therefore, the mapping $\beta_s$ is also continuous provided that $f$ is quasi-proper. 
This completes the proof of (2) of Theorem~\ref{T:suff condi stability}.

\section{Applications}\label{Se:application}

In this section we will give two applications of Theorem~\ref{T:suff condi stability}. 
We first give an explicit example of strongly but not infinitesimally stable function. 
As we noted in the introduction, we could not obtain such an example relying on known results. 
We then discuss stability of Nash functions, relating it with behavior of their gradients around the end of the source spaces.

\subsection{A strongly stable but not infinitesimally stable function}

In this subsection we will prove the following theorem:

\begin{theorem}\label{T:ex strongly but not inf stable func}
The function $F:\R\to \R$ defined by $F(x) = \exp(-x^2)\sin x$ is strongly stable but not infinitesimally stable. 

\end{theorem}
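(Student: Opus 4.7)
The plan is to verify, for $F(x)=e^{-x^2}\sin x$, the three properties: (a) $F$ is a Morse function in the sense of the paper, (b) $F$ is quasi-proper, and (c) $F|_{\Sigma(F)}$ is not proper. Given these, part~(2) of Theorem~\ref{T:suff condi stability} yields strong stability, while Mather's characterization (infinitesimal stability is equivalent to local stability together with properness of $f|_{\Sigma(f)}$) rules out infinitesimal stability.

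First, I would compute $F'(x)=e^{-x^2}(\cos x - 2x\sin x)$, so that $\Sigma(F)$ is the set of solutions of $\cot x = 2x$ (plus $x=0$ handled separately). On each interval $(k\pi,(k+1)\pi)$, $\cot x$ is strictly decreasing from $+\infty$ to $-\infty$, so there is exactly one critical point $x_k$ in each such interval; in particular, $\sin x_k\neq 0$ at any critical point. Differentiating once more, a short computation using $\cos x_k = 2x_k\sin x_k$ gives $F''(x_k) = -e^{-x_k^2}(4x_k^2+3)\sin x_k\neq 0$, so every critical point is non-degenerate. Injectivity of $F|_{\Sigma(F)}$ is then easy: critical values alternate in sign (since $\sin$ does on consecutive intervals $(k\pi,(k+1)\pi)$), and the rapid decay of $|F(x_k)|\le e^{-x_k^2}$ forces strict monotonicity of $|F|$ on the positive critical points for large $k$, while the finitely many small critical values are checked directly (and oddness of $F$ pairs positive with negative values without collisions, since no critical value is $0$).

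Second, I would show quasi-properness by computing $Z(F)$ and $\Delta(F)$ explicitly. Since $|F(x)|\le e^{-x^2}\to 0$ as $|x|\to\infty$, any sequence $\{x_n\}$ without accumulation points forces $F(x_n)\to 0$, so $Z(F)=\{0\}$. On the other hand, $0\in\Delta(F)$ would require a critical point $x_k$ with $\sin x_k=0$, and the critical equation $\cos x_k=2x_k\sin x_k$ then forces $\cos x_k=0$, contradicting $\sin^2+\cos^2=1$. Hence $Z(F)\cap \Delta(F)=\emptyset$, i.e.\ $F$ is quasi-proper, and Theorem~\ref{T:suff condi stability}(2) gives strong stability.

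Finally, to preclude infinitesimal stability, observe that the critical points $x_k$ form a sequence without accumulation points with $|x_k|\to\infty$, yet $F(x_k)\to 0$; so $0\in Z(F|_{\Sigma(F)})$. Therefore $F|_{\Sigma(F)}$ is not proper, and by Mather's criterion $F$ is not infinitesimally stable. The only step demanding real care is verifying injectivity of $F|_{\Sigma(F)}$ (i.e.\ that no two critical values coincide); I expect this to be the main obstacle, since strict monotonicity of $|F(x_k)|$ needs to be checked near $x=0$ by a direct (finite) calculation, whereas the argument is automatic for large $|k|$ by the $e^{-x^2}$ decay.
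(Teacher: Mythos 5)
Your proposal is correct and follows essentially the same route as the paper: verify that $F$ is Morse, show $Z(F)=\{0\}$ while $0\notin\Delta(F)$ so that $F$ is quasi-proper and Theorem~\ref{T:suff condi stability}(2) applies, and combine Mather's criterion with $0\in Z(F|_{\Sigma(F)})$ to rule out infinitesimal stability. For the injectivity step you flag as the delicate point, the critical equation $\cos x_k=2x_k\sin x_k$ gives $\sin^2 x_k=1/(1+4x_k^2)$, hence $|F(x_k)|=e^{-x_k^2}/\sqrt{1+4x_k^2}$ is strictly decreasing in $|x_k|$ with no large-$k$/small-$k$ case split needed; this identity is precisely what underlies the monotonicity of $|\sin a_n|$ asserted without proof in the paper.
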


\begin{proof}
As we briefly reviewed in Subsection~\ref{Se:stability maps}, a locally stable (Morse) function $f:\R\to \R$ is infinitesimally stable if and only if $f|_{\Sigma(f)}$ is proper, which is equivalent to the condition $Z(f|_{\Sigma(f)})=\emptyset$, where $Z(f|_{\Sigma(f)})$ was defined in the beginning of Section~\ref{Se:preliminaries}.
It is also known that quasi-properness of a function $f$ is equivalent to the condition $Z(f)\cap \Delta(f)=\emptyset$.
Since a locally stable function $f$ is strongly stable if and only if $f$ is quasi-proper by Theorem~\ref{T:suff condi stability}, it is enough to show the followings: 
\begin{enumerate}

\item 
$F$ is a Morse function, 

\item 
$Z(F) = Z(F|_{\Sigma(F)}) = \{0\}$, 

\item 
$\Delta(F)$ does not contain $0$. 

\end{enumerate}
To see them, we need the first and the second derivatives of $F$, which can be calculated as follows:
{\allowdisplaybreaks
\begin{align*}
F'(x) &= \exp(-x^2)(-2x \sin x +\cos x), \\
F''(x) &= \exp(-x^2) \left\{(4x^2-3)\sin x -4x \cos x\right\}. 
\end{align*}
}%
Thus $F'(x)$ is equal to $0$ if and only if $\tan x$ is equal to $1/2x$. 
Let $a_n\in \R$ be the $n$-th smallest positive solution of the equation $\tan x = 1/2x$. 
It is easy to see that $\Sigma(F)$ is equal to $\{\pm a_n\in \R~|~n\in \Z_{>0}\}$. 
We can further verify the following properties of the sequence $\{a_n\}_{n\geq 1}$: 

\begin{enumerate}[(A)]

\item 
$\D n\pi < a_n < \frac{(2n+1)\pi}{2}$, in particular $\D \lim_{n\to \infty} a_n = \infty$, 

\item 
$\D \left|\sin a_n\right| >\left|\sin a_{n+1}\right| >0$ for any $n>0$. 

\end{enumerate}
We can deduce from the condition (B) that $\left|F(\pm a_n)\right| = \left|F(\pm a_m)\right|$ if and only if $n=m$. 
Since $F$ is an odd function, we can conclude that $F|_{\Sigma}$ is injective. 
Suppose that $F''(a_n)$ were equal to $0$ for some $n>0$. 
The solution $a_n$ would satisfy the equality $\D \frac{4a_n}{4a_n^2-3}=\frac{1}{2a_n}$, but the equation $\D \frac{4x}{4x^2-3}=\frac{1}{2x}$ does not have a real solution. 
Hence each critical point of $F$ is non-degenerate, concluding that $F$ is a Morse function. 

We can deduce from the condition (A) on $\{a_n\}_{n>0}$ that $0$ is contained in $Z(F|_{\Sigma})$ (and also in $Z(F)$). 
On the other hand, since $\D \lim_{x\to \pm \infty}F(x)$ is equal to $0$, $0\in \R$ is the only improper point of $F$ (and that of $F|_{\Sigma}$).
Lastly, by the condition (B) we can prove that $\Delta(F)$ does not contain $0$. 
\end{proof}

\subsection{Stability of Nash functions}
In this subsection we will discuss stability of Nash functions.
The reader can refer to \cite{BochnakCosteRoy}, for example, for the definition and basic properties of Nash functions.  

As mentioned in the introduction, the complement of $\tau(f)$, denote by $B_\infty(f)$, is called the set of \textit{bifurcation values at infinity} for a semi-algebraic mapping $f$. 
We first introduce two conditions related to bifurcation values. 
Let $f:\R^n\to \R$ be a Nash function. 
We say that $f$ satisfies the \textit{Fedoryuk} (resp.~\textit{Malgrange}) \textit{condition} at $y\in \R$ if there exist an open neighborhood $V$ of the end of $\R^n$ and positive numbers $\delta, \varepsilon>0$ such that $\norm{\nabla f(x)}>\varepsilon$ (resp.~$\norm{x}\cdot\norm{\nabla f(x)}>\varepsilon$) for any $x\in f^{-1}(y-\delta,y+\delta)\cap V$, where $\nabla f:\R^n\to \R^n$ is the gradient of $f$. 
It immediately follows from the definitions that $f$ satisfies the Malgrange condition at $y$ if it satisfies the Fedoryuk condition at $y$.
However, the converse does not hold in general (see e.g.~\cite{KOSSemialgSard}). 

\begin{proposition}\label{T:malgrange imply end-trivial}

A Nash function $f:\R^n\to \R$ is end-trivial at $y\in \R$ if $f$ satisfies the Malgrange condition at $y\in \R$. 
The trivializing mapping $\Phi$ in the definition of end-triviality can be given by the flow of $\nabla f/\norm{\nabla f}^2$. 

\end{proposition}

\noindent
Although this proposition is regarded as a well-known fact in the literature, apparently no references give its proof explicitly. 
The proposition can be shown as follows:
By the assumption, no critical points of $f$ are contained in $f^{-1}(y-\delta,y+\delta)\setminus B(R)$ for some $R,\delta>0$. 
Furthermore, replacing $R$ (resp.~$\delta$) with larger (resp.~smaller) one if necessary, we can verify (in a way similar to the proof of \cite[Theorem 3.5]{DGgradient}) that the flow of $\nabla f/\norm{\nabla f}^2$ gives an embedding $\Phi:\left(f^{-1}(y)\setminus \Cl{B(R)}\right)\times [y-\delta,y+\delta] \to f^{-1}([y-\delta,y+\delta])$ such that the complement of its image is compact, which we denote by $K$.
Thus, $\Phi$ is the trivializing mapping for a restriction of $f$ on $f^{-1}([y-\delta,y+\delta])\cap V$, where $V=\R^n\setminus K$. 

We can immediately deduce the following from Theorem~\ref{T:suff condi stability} and Proposition~\ref{T:malgrange imply end-trivial}: 

\begin{corollary}\label{T:suff condi stability semi-algebraic mapping}

A locally stable Nash function $f:\R^n\to \R$ is stable if it satisfies the Malgrange condition at any critical value of $f$. 

\end{corollary}

\begin{example}\label{Ex:modelMorse stable}
For $k\in \{0,\ldots,n\}$ we define a function $G_k:\R^n\to \R$ as follows: 
\[
G_k(x_1,\ldots,x_n) = \sum_{i=1}^{k} x_i^2 -\sum_{j=k+1}^n x_j^2. 
\]
It is obvious that $G_k$ is a Morse function (locally stable) for any $k$. 
Since the functions $G_0$ and $G_n$ is proper, we can deduce from the result of Mather \cite{MatherV} that they are (strongly) stable. 
However, $G_k$ with $0<k<n$ is \textit{not} quasi-proper (especially not strongly stable). 
The function $G_k$ is defined by a polynomial, in particular it is a Nash function. 
Since the gradient $\nabla G_k$ is a diffeomorphism, $G_k$ satisfies the Malgrange (and even Fedoryuk) condition at any value in $\R$. 
We can therefore deduce from Corollary~\ref{T:suff condi stability semi-algebraic mapping} that $G_k$ is stable. 
\end{example}

\begin{corollary}\label{T:stable after linear perturbation}
Let $f:\R^n\to \R$ be a Nash function. 
There exists a Lebesgue measure zero subset $\Sigma\subset \R^n$ such that for any $a=(a_1,\ldots, a_n) \in \R^n\setminus \Sigma$ the function $f_a:\R^n\to \R$ defined as
\[
f_a(x_1,\ldots,x_n) = f(x_1,\ldots,x_n) + \sum_{i=1}^{n}a_ix_i 
\]
is stable. 

\end{corollary}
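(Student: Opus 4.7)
The plan is to apply Theorem~\ref{T:stability Nash func} to the perturbed function $f_a$. Since $f_a$ is a Nash function for every $a\in\R^n$, it suffices to find a Lebesgue measure zero set $\Sigma \subset \R^n$ outside of which both hypotheses of Theorem~\ref{T:stability Nash func} hold: namely, (a) $f_a$ is locally stable (equivalently, Morse), and (b) $0 \notin Z(\nabla f_a)$.

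For (a), the critical points of $f_a$ are the solutions of $\nabla f(x) = -a$, and the Hessian of $f_a$ coincides with $\nabla^2 f$. Applying Sard's theorem to the Nash map $\nabla f : \R^n \to \R^n$ shows that $-a$ is a regular value of $\nabla f$ for almost every $a$, whence every critical point of $f_a$ is automatically non-degenerate. To additionally secure injectivity of $f_a$ on its critical set (and hence that $f_a$ is Morse), I would invoke the theorem of Ichiki \cite{Ichiki} on local stability under generic linear perturbations of Nash mappings. Together these produce a measure zero set $\Sigma_1\subset\R^n$ such that $f_a$ is a Morse function for every $a \in \R^n \setminus \Sigma_1$.

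For (b), the identity $\nabla f_a(x) = \nabla f(x) + a$ gives the translation
\[
Z(\nabla f_a) \;=\; Z(\nabla f) + a, \qquad \text{so that} \qquad 0 \in Z(\nabla f_a) \iff -a \in Z(\nabla f).
\]
Now $\nabla f : \R^n \to \R^n$ is a Nash self-map, so its set of improper points $Z(\nabla f)$ is a semi-algebraic subset of $\R^n$. By the real Nash analogue of Jelonek's theorem on the bifurcation locus of a polynomial self-map, $Z(\nabla f)$ has real dimension at most $n-1$ and therefore has Lebesgue measure zero (a direct check: if $\nabla f$ is not dominant, its image already has dimension $<n$; otherwise, the bifurcation estimate applies). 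Setting $\Sigma_2 := -Z(\nabla f)$ gives a second measure zero set, and every $a \in \R^n \setminus \Sigma_2$ satisfies $0 \notin Z(\nabla f_a)$.

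Taking $\Sigma := \Sigma_1 \cup \Sigma_2$, each $a \in \R^n \setminus \Sigma$ yields a locally stable Nash function $f_a$ with $0 \notin Z(\nabla f_a)$, and Theorem~\ref{T:stability Nash func} then delivers stability of $f_a$. I anticipate the main obstacle to be the measure-zero statement for $Z(\nabla f)$: unlike the non-degeneracy of the critical points, which is a direct consequence of Sard applied to $\nabla f$, controlling the \emph{improper} values of $\nabla f$ does not follow from any Sard-type argument and must be imported from the less elementary dimensional bound on the bifurcation set of a Nash self-map, an ingredient logically separate from Ichiki's theorem.
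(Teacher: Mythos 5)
Your proposal is correct and its skeleton coincides with the paper's: $\Sigma=\Sigma_1\cup\Sigma_2$ with $\Sigma_1$ supplied by Ichiki's theorem (your extra Sard argument for non-degeneracy is harmless but redundant, since local stability of $f_a$ for a.e.\ $a$ is exactly what \cite[Theorem 2]{Ichiki} gives, and for functions local stability is equivalent to being Morse), and $\Sigma_2$ handling the translation identity $Z(\nabla f_a)=Z(\nabla f)+a$, which the paper uses verbatim. The one genuine difference is how the measure-zero claim for $Z(\nabla f)$ is justified. You import it from Jelonek-type theorems on the non-properness set of a real polynomial/Nash self-map (together with the correct fallback that $Z(\nabla f)\subset\overline{\nabla f(\R^n)}$ when $\nabla f$ is not dominant). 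The paper instead derives the bound internally from Hardt's semi-algebraic triviality theorem \cite[Theorem 9.3.2]{BochnakCosteRoy}: writing $\R^n=\bigcup_i T_i$ with $(\nabla f)^{-1}(T_i)$ semi-algebraically homeomorphic to $F_i\times T_i$ over $T_i$, the inequality $\dim F_i+\dim T_i\le n$ forces every stratum with positive-dimensional fibre to lie over a $T_i$ of dimension at most $n-1$, so the set of improper values is contained in a semi-algebraic set of measure zero. The two routes prove the same dimension estimate; the paper's has the advantage of using only a reference already in its bibliography (and is essentially a proof of the Nash case of the Jelonek statement you invoke), while yours is shorter if one is willing to cite that statement as known. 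Either way the application of Theorem~\ref{T:stability Nash func} then goes through exactly as you describe.
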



\begin{proof}
We can first deduce from \cite[Theorem 2]{Ichiki} that there exists a Lebesgue measure zero subset $\Sigma_1\subset \R^n$ such that $f_a$ is locally stable for any $a\in \R^n\setminus \Sigma_1$. 
Since the gradient $\nabla f$ is a Nash mapping, we can take finitely many semi-algebraic subsets $T_1,\ldots, T_l \subset \R^n$ with $\R^n = \bigcup_{i} T_i$ so that for each $i=1,\ldots,l$ there exist a semi-algebraic set $F_i$ and a semi-algebraic homeomorphism $\theta_i:(\nabla f)^{-1}(T_i)\to F_i\times T_i$ such that $\nabla f|_{(\nabla f)^{-1}(T_i)} = p_2\circ \theta_i$ (see \cite[Theorem 9.3.2]{BochnakCosteRoy}).
We define a subset $\Sigma_2\subset \R^n$ as follows: 
\[
\Sigma_2 = \bigcup_{\dim F_j \geq 1} \{-a\in \R^n~|~ a\in T_j\}. 
\]
Since the dimension of $F_i\times T_i$ (as a semi-algebraic set) is at most $n$, the dimension of $\Sigma_2$ is less than $n$. 
Thus $\Sigma_2$ has Lebesgue measure zero. 
Let $\Sigma$ be the union $\Sigma_1\cup \Sigma_2$, which has Lebesgue measure zero. 
For any $a\in \R^n\setminus \Sigma$, the function $f_a$ is locally stable. 
Furthermore, there exists $\varepsilon>0$ such that $(\nabla f_a)^{-1}(\Cl{B(\varepsilon)}) = (\nabla f)^{-1}(-a + \Cl{B(\varepsilon)})$ is compact, and thus $f_a$ satisfies the Fedoryuk condition at any value in $\R$. 
We can then deduce from Corollary~\ref{T:suff condi stability semi-algebraic mapping} that $f_a$ is stable for any $a\in \R^n\setminus \Sigma$. 
\end{proof}

\appendix

\section{Estimates on critical points/values of Morse functions}

Recall that we denote by $B(r)\subset \R^n$ the open $n$-ball with radius $r$ centered at the origin.
We define a Morse function $f:B(r)\to \R$ as follows: 
\[
f(x_1,\ldots,x_n) = \sum_{i=1}^{n}(-1)^{\varepsilon_i}x_i^2+c,
\]
where $\varepsilon_i = 0$ or $1$. 
In this appendix we will obtain several estimates on configuration of critical points and values of functions close to $f$. 
The estimates below are used repeatedly in Section~\ref{Se:proof main thm}.

\begin{lemma}\label{T:crit pt unique}
Suppose that $r$ is less than $1$.
If a function $g:B(r)\to \R$ satisfies the inequality $\norm{g-f}_{2,B(r)} < r/n$, there exists a unique critical point of $g$ in $B(r)$. 

\end{lemma}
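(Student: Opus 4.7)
The plan is to rewrite $\nabla g(x)=0$ as a fixed-point equation $x=T(x)$ for a contraction $T$, and then apply the Banach fixed-point theorem. Since $f(w)=\sum(-1)^{\varepsilon_i}w_i^2+c$, its gradient is the linear map $\nabla f(x)=2Dx$ with $D=\mathrm{diag}((-1)^{\varepsilon_1},\ldots,(-1)^{\varepsilon_n})$, and its Hessian $D^2f\equiv 2D$ is the constant invertible matrix with $D^{-1}=D$ and operator norm $2$. Decomposing $\nabla g=\nabla f+(\nabla g-\nabla f)$, the critical-point equation becomes $2Dx=-(\nabla g-\nabla f)(x)$, equivalently $x=T(x)$ where
\[
T(x) \;=\; -\tfrac{1}{2}\,D\bigl(\nabla g(x)-\nabla f(x)\bigr).
\]

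The second step is to verify the Banach hypotheses on the closed ball $\overline{B(r/2)}\subset B(r)$. The assumption $\norm{g-f}_{2,B(r)}<r/n$ yields both
\[
\sup_{x\in B(r)}\bigl\|\nabla g(x)-\nabla f(x)\bigr\| < r/n \quad\text{and}\quad \sup_{x\in B(r)}\bigl\|D^2 g(x)-D^2 f(x)\bigr\| < r/n.
\]
The first inequality gives $\|T(x)\|<r/(2n)\le r/2$, so $T$ sends $\overline{B(r/2)}$ into itself. The second, combined with the mean value inequality applied to the vector field $\nabla g-\nabla f$, yields
\[
\|T(x)-T(y)\| \;\le\; \tfrac{1}{2}\sup_{z\in B(r)}\bigl\|D^2 g(z)-D^2 f(z)\bigr\|\cdot\|x-y\| \;<\; \tfrac{r}{2n}\|x-y\|,
\]
and since $r<1$ and $n\ge 1$, the contraction constant is strictly less than $1/2$. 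Banach's theorem then produces a unique fixed point $x^\ast\in\overline{B(r/2)}$, which is a critical point of $g$ lying in $B(r)$.

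Finally, I promote uniqueness from $\overline{B(r/2)}$ to the whole open ball $B(r)$: any critical point $x$ of $g$ in $B(r)$ satisfies $\|x\|=\|T(x)\|<r/(2n)\le r/2$, hence already lies in $\overline{B(r/2)}$ and must coincide with $x^\ast$. There is nothing delicate in the argument; the key structural fact is that $D^2f$ is a constant invertible matrix whose smallest singular value $2$ dominates the allowed $C^2$-perturbation size $r/n<1$, so that $D^2g$ remains uniformly invertible on $B(r)$ and the gradient equation admits a genuine contraction reformulation. The only minor bookkeeping is to work on the half-radius ball so that $T$ maps it into itself before invoking Banach.
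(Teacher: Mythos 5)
Your proposal is correct and takes essentially the same approach as the paper: the map $T(x)=-\tfrac12 D\bigl(\nabla g(x)-\nabla f(x)\bigr)$ simplifies (using $D^2=I$) to $x-\tfrac12 D\,\nabla g(x)$, which is exactly the paper's Newton-type map $F(x)=x-(df)^{-1}\circ dg(x)$. The only difference is presentational: the paper runs the Picard iteration from $x_0=0$ by hand and sums the geometric series, whereas you invoke the Banach fixed-point theorem on $\overline{B(r/2)}$ directly, which is a cleaner packaging of the same estimates.
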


\begin{proof}
In what follows we will regard sections of $T^\ast B(r)$ (such as $df$ and $dg$) as smooth mappings from $B(r)$ to $\R^n$ in the obvious way. 
Note that $df(x)$ is equal to $(2(-1)^{\varepsilon_1}x_1,\ldots,2(-1)^{\varepsilon_n}x_n)$.
We define a smooth mapping $F:B(r)\to \R^n$ by $F(x) = x-(df)^{-1}\circ dg(x)$.
We can easily deduce from the assumption that the absolute value of each partial derivative of $F$ in $B(r)$ is less than $r/2n$. 
For any $x,x'\in B(r)$, the norm $\norm{F(x)-F(x')}$ can be estimated as follows: 
\begin{equation}\label{Eq:estimate norm for Morse}
\begin{split}
&\norm{F(x) - F(x')} \\
\leq& \sqrt{n} \max_{i=1,\ldots,n} \left|F_i(x) - F_i(x')\right| \\
\leq& \sqrt{n} \max_{i=1,\ldots,n} \int_{0}^{1} \left|\frac{d}{dt} F_i(tx +(1-t)x') \right|dt \\
\leq& \sqrt{n} \max_{i=1,\ldots,n} \int_{0}^{1} \sum_{j=1}^n\left|\frac{\Pa F_i}{\Pa x_j}(tx+(1-t)x')\right|\cdot \left|x_{j}-x_{j}'\right|dt \\
\leq& \sqrt{n} \frac{r}{2n} \sum_{j=1}^n \left|x_{j}-x_{j}' \right|\leq \frac{r}{2}\norm{x-x'}. 
\end{split}
\end{equation}

We inductively define $x_k\in \R^n$ as follows:
\[
x_0 =0, \hspace{.5em}x_{k+1} = F(x_k). 
\]
The norm $\norm{x_1}$ ($=\norm{x_1-x_0}$) can be evaluated as follows: 
{\allowdisplaybreaks
\begin{align*}
\norm{x_1} &=\norm{(df)^{-1}\circ dg (0)}=\frac{1}{2}\norm{dg(0)-df(0)}\leq \frac{1}{2}\norm{g-f}_{2,B(1)} < \frac{r}{2n} \leq \frac{r}{2}.  
\end{align*}
}%
Thus, we can deduce the following inequality from \eqref{Eq:estimate norm for Morse} by induction on $k$: 
\begin{equation}\label{Eq:estimate norm}
\norm{x_k-x_{k-1}} < \left(\frac{r}{2}\right)^k.
\end{equation}
In particular, $\{x_k\}_{k=1}^\infty$ has a limit point $x\in \R^n$ since $\{x_k\}_{k=1}^\infty$ is a Cauchy sequence. 
By the definition of $x_k$, $dg(x)$ is equal to $df(0)=0$. 
Thus $x$ is a critical point of $g$. 
Furthermore, as $r<1$, the norm $\norm{x}$ is less than or equal to 
\[
\sum_{k=1}^\infty \norm{x_{k} - x_{k-1}} < \sum_{k=1}^\infty \left(\frac{r}{2}\right)^{k}=\frac{r}{2-r}<r. 
\]
Hence $x$ is contained in $B(r)$. 

For points $x,x'\in B(r)$, the norm $\norm{df^{-1}\circ dg(x) - df^{-1}\circ dg(x')}$ can be estimated as follows: 
\begin{align*}
&\norm{df^{-1}\circ dg(x) - df^{-1}\circ dg(x')} \geq \norm{x-x'} - \norm{F(x)-F(x')}\\
\geq &\left(1-\frac{r}{2}\right)\norm{x-x'} > \frac{1}{2}\norm{x-x'}. 
\end{align*}
Thus, the mapping $dg$ is injective on $B(r)$, in particular a critical point of $g$ in $B(r)$ is unique. 
\end{proof}

\begin{lemma}\label{T:evaluation critical point/value}
Suppose that functions $g,h:B(r)\to \R$ satisfy the inequalities $\norm{g-f}_{2,B(r)}<r/n$ and $\norm{h-f}_{2,B(r)} < r/n$. 
Let $x_{g},x_{h}\in B(r)$ be critical points of $g, h$ in $B(r)$ (existence of such points follows from Lemma~\ref{T:crit pt unique}) and $y_{g},y_{h}$ their images. 
If the points $x_g,x_h$ are contained in a convex set $U\subset B(r)$, the following inequalities hold: 
{\allowdisplaybreaks
\begin{align*}
\left|x_g-x_{h}\right|  < & \sqrt{n}\norm{g-h}_{1,U}, \\
\left|y_g-y_h\right| < & \left(\sqrt{n}\norm{g}_{1,U} +1\right) \norm{g-h}_{1,U}. 
\end{align*}
}

\end{lemma}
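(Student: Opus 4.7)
The plan is to reuse the fixed-point setup from the proof of Lemma~\ref{T:crit pt unique} and then derive the second inequality as a direct consequence of the first via the mean value inequality. Define
\[
F_g(x) = x - (df)^{-1}\circ dg(x), \qquad F_h(x) = x - (df)^{-1}\circ dh(x),
\]
so that $x_g$ and $x_h$ are fixed points of $F_g$ and $F_h$ respectively (the existence of the inverse $(df)^{-1}$ is immediate since $df(w) = (2(-1)^{\varepsilon_1}w_1,\ldots,2(-1)^{\varepsilon_n}w_n)$, and $|(df)^{-1}(v)| = |v|/2$). From the hypothesis $\|g-f\|_{2,B(r)}<r/n$ the proof of Lemma~\ref{T:crit pt unique} already shows that each partial derivative of $F_g$ on $B(r)$ is bounded in absolute value by $r/(2n)$, and the same bound applies to $F_h$.

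For the first inequality, I would write
\[
x_g - x_h = \bigl(F_g(x_g) - F_g(x_h)\bigr) + \bigl(F_g(x_h) - F_h(x_h)\bigr).
\]
Since $U$ is convex, I can estimate the $i$-th component of the first summand by
\[
|F_{g,i}(x_g) - F_{g,i}(x_h)| \leq \int_0^1 \Bigl|\sum_j \partial_j F_{g,i}(x_h + t(x_g-x_h))(x_g-x_h)_j\Bigr|\,dt \leq \frac{r}{2n}\sum_j |(x_g-x_h)_j|,
\]
and then use $\sum_j|v_j|\le \sqrt n\,|v|$ followed by $|v|\le \sqrt n\max_i|v_i|$ to collapse this into $|F_g(x_g)-F_g(x_h)| \leq (r/2)|x_g-x_h|$. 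The second summand is $(df)^{-1}\bigl(dh(x_h)-dg(x_h)\bigr)$, whose norm is $\tfrac{1}{2}|dg(x_h)-dh(x_h)| \leq \tfrac{1}{2}\|g-h\|_{1,U}$ (with an extra factor of $\sqrt n$ introduced from the same max-to-Euclidean passage). Rearranging yields $(1-r/2)|x_g-x_h|\le (\sqrt n/2)\|g-h\|_{1,U}$, and since $r<1$ we obtain $|x_g-x_h|<\sqrt n\,\|g-h\|_{1,U}$.

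For the second inequality I would use the triangle inequality
\[
|y_g - y_h| = |g(x_g) - h(x_h)| \leq |g(x_g)-g(x_h)| + |g(x_h)-h(x_h)|.
\]
The first term is bounded via the mean value inequality on the segment from $x_h$ to $x_g$ by $\sup_{U}\|dg\|\cdot |x_g-x_h| \leq \|g\|_{1,U}\cdot \sqrt n\,\|g-h\|_{1,U}$ using the first inequality. The second term is bounded by $\|g-h\|_{1,U}$ directly from the definition of the norm $\|\cdot\|_{1,x_h}$. Summing the two gives the desired bound.

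The main technical point is tracking the $\sqrt n$ factor in the estimate of the first summand: one has to juggle the operator-norm bound on $DF_g$ coming from Lemma~\ref{T:crit pt unique} (which is naturally stated componentwise) against the Euclidean norm on $\R^n$, and the two passages $\sum_j|v_j|\le \sqrt n|v|$ and $|v|\le \sqrt n\max_i|v_i|$ combine to produce exactly the $\sqrt n$ loss that appears in the statement. The remaining ingredients (the identity $dg(x_g)=dh(x_h)=0$, the explicit inverse of $df$, and convexity of $U$ for applying the integral form of the mean value theorem) are routine.
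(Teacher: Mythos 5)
Your argument is correct, and for the first (main) inequality it takes a genuinely different route from the paper. The paper does not touch the fixed-point map $F$ again: it instead lower-bounds $\norm{D^1g(z)}$ directly, using that the Hessian of $g$ is within $r/n$ of $\mathrm{diag}(\pm 2,\ldots,\pm 2)$ (so $\lvert\partial^2 g/\partial x_j^2\rvert>2-r/n$ and $\lvert\partial^2 g/\partial x_j\partial x_k\rvert<r/n$), picks the coordinate $j_0$ maximizing $\lvert z_{j_0}-(x_g)_{j_0}\rvert$, and obtains $\norm{D^1g(z)}>\lvert z-x_g\rvert/\sqrt{n}$; evaluating $\norm{D^1h}\geq\norm{D^1g}-\norm{g-h}_{1,U}$ at $z=x_h$, where $D^1h$ vanishes, then forces $\lvert x_h-x_g\rvert<\sqrt{n}\norm{g-h}_{1,U}$. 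Your contraction-style decomposition $x_g-x_h=(F_g(x_g)-F_g(x_h))+(F_g(x_h)-F_h(x_h))$ reuses the estimate $\lvert F_g(x)-F_g(x')\rvert\leq \tfrac{r}{2}\lvert x-x'\rvert$ already established inside Lemma~\ref{T:crit pt unique}, and treats $x_h$ as a perturbed fixed point; it is equally valid (the segment from $x_h$ to $x_g$ lies in $U\subset B(r)$ by convexity, and $dh(x_h)=0$ makes the second summand $-(df)^{-1}(dg(x_h))$ of norm $\tfrac12\lvert dg(x_h)-dh(x_h)\rvert\leq\tfrac12\norm{g-h}_{1,U}$). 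In fact your route gives the sharper bound $\lvert x_g-x_h\rvert\leq\norm{g-h}_{1,U}/(2-r)$ with no dimensional loss at all; the extra $\sqrt{n}$ you insert in the second summand is unnecessary, and the stated inequality follows a fortiori. (Both your argument and the paper's share the same harmless degeneracy when $\norm{g-h}_{1,U}=0$, in which case $x_g=x_h$ by uniqueness.) Your treatment of the second inequality is identical to the paper's triangle-inequality argument.
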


\begin{proof}
By the assumption the following inequalities hold for any $j,k\in \{1,\ldots,n\}$ ($j\neq k$) and $x\in B(r)$:
\[
\left|\frac{\Pa^2 g}{\Pa x_j^2}(x)\right|> 2-\frac{r}{n}, \hspace{.5em} \left|\frac{\Pa^2 g}{\Pa x_j\Pa x_k}(x)\right|< \frac{r}{n}. 
\]
For $z\in U$, the norm $\norm{D^1h(z)}$ can be estimated as follows: 
\[
\norm{D^1h(z)} 
\geq\norm{D^1g(z)}- \norm{D^1h(z) - D^1g(z)} 
\geq\norm{D^1g(z)}- \norm{g-h}_{1,U}.
\]
Let $\D m_z = \max_{j} \left|z_j-(x_{g})_j\right|$ and we take $j_0 \in \{1,\ldots, n\}$ so that $m_z$ is equal to $\left|z_{j_0}-(x_{g})_{j_0}\right|$. 
We can then estimate the norm $\norm{D^1g(z)}$ as follows: 
{\allowdisplaybreaks
\begin{align*}
\norm{D^1g(z)} = & \norm{D^1g(z) - D^1g(x_{g})}\\
\geq & \left| \frac{\Pa g}{\Pa x_{j_0}}(z) - \frac{\Pa g}{\Pa x_{j_0}}(x_{g})\right|\\
= & \left|\sum_{k=1}^n (z_k - {(x_g)}_k)\frac{\Pa^2 g}{\Pa x_k \Pa x_{j_0}}(cz +(1-c)x_g) \right| & (\mbox{for }\exists c\in [0,1])\\
\geq & m_z \left(\min_{w\in U}\left|\frac{\Pa^2 g}{\Pa x_{j_0}^2}(w)\right|-\sum_{j\neq j_0} \max_{w\in U}\left|\frac{\Pa^2 g}{\Pa x_j\Pa x_{j_0}}(w)\right|\right) \\
> & m_z \left(2 - n \frac{r}{n}\right) > m_z \geq \frac{\left|z-x_{g}\right|}{\sqrt{n}}. 
\end{align*}
}
Hence, $\D \norm{D^1h(z)}$ is greater than $\D \frac{\left|z-x_{g}\right|}{\sqrt{n}}-\norm{g-h}_{1,U}$. 
Since the point $x_{h}$ is a critical point of $h$, $D^1h(x_{h})$ is equal to $0$, in particular $\D \frac{\left|x_{h}-x_{g}\right|}{\sqrt{n}}-\norm{g-h}_{1,U}$ is less than $0$.
We thus obtain the inequality $\left|x_{h} - x_{g}\right| < \sqrt{n}\norm{g-h}_{1,U}$. 
The estimate of the norm $\left|y_g-y_h\right|$ can be obtained as follows:
{\allowdisplaybreaks
\begin{align*}
|y_{g}-y_{h}| & = |g(x_g)-h(x_{h})| \\
& \leq |g(x_g)-g(x_{h})| + |g(x_{h})-h(x_{h})|\\
& \leq \left(\max_{x\in U}\norm{dg_x} \right) \cdot \left|x_g-x_{h}\right| + \norm{g-h}_{1,U} \\
& < \left(\sqrt{n}\norm{g}_{1,U} +1\right) \norm{g-h}_{1,U}.
\end{align*}
}
\end{proof}

\noindent
{\bf Acknowledgments.}
The author would like to express his gratitude to Takashi Nishimura and Shunsuke Ichiki for 
helpful discussions throughout the course of this work. 
The author would also like to thank Maria Michalska for informing the author of the notion of bifurcation values and relevant references. 
The author was supported by JSPS KAKENHI (Grant Number 17K14194).


\end{document}